\numberwithin{equation}{section}
\long\def\@makecaption#1#2{%
  \vskip\abovecaptionskip
  \sbox\@tempboxa{#1 #2}%
  \ifdim \wd\@tempboxa >\hsize
    #1 #2\par
  \else
    \global \@minipagefalse
    \hb@xt@\hsize{\hfil\box\@tempboxa\hfil}%
  \fi
  \vskip\belowcaptionskip}
\tikzset{ 
  on each segment/.style={
    decorate,
    decoration={
      show path construction,
      moveto code={},
      lineto code={
        \path [#1]
        (\tikzinputsegmentfirst) -- (\tikzinputsegmentlast);
      },
      curveto code={
        \path [#1] (\tikzinputsegmentfirst)
        .. controls
        (\tikzinputsegmentsupporta) and (\tikzinputsegmentsupportb)
        ..
        (\tikzinputsegmentlast);
      },
      closepath code={
        \path [#1]
        (\tikzinputsegmentfirst) -- (\tikzinputsegmentlast);
      },
    },
  },
  mid arrow/.style={postaction={decorate,decoration={
        markings,
        mark=at position .6 with {\arrow[#1]{stealth}}
      }}},
}
\newtheorem{theorem}{Theorem}[section]
\newtheorem{lemma}[theorem]{Lemma}
\newtheorem{proposition}[theorem]{Proposition}
\newtheorem{corollary}[theorem]{Corollary}
\newtheorem{definition}[theorem]{Definition}
\newtheorem{remark}[theorem]{Remark}
\newtheorem{example}[theorem]{Example}
\newcommand{\qed}{{\unskip\nobreak\hfil\penalty50\hskip1em\hbox{}\nobreak
   \hfil \ensuremath{\Box}\parfillskip=0pt \par}}
\title{\vspace{-2.5cm} Unmixedness of some weighted oriented graphs}
\author{\small Lourdes Cruz\thanks{\texttt{lcruzg@math.cinvestav.mx}}\ }
\author{\small Yuriko Pitones\thanks{\texttt{ypitones@math.cinvestav.mx}}\ }
\author{\small Enrique Reyes\thanks{\texttt{ereyes@math.cinvestav.mx}}\ }
\affil{\vspace{-0.2cm}\scriptsize Departamento de Matem\'{a}ticas\\ Centro de Investigaci\'{o}n y de Estudios Avanzados del Instituto Polit\'ecnico Nacional\\ Apartado Postal 14--740, Ciudad de M\'{e}xico\\ 07000 M\'exico}
\date{\tiny\ }
\begin{document}
\maketitle
\thispagestyle{empty}

\vspace{-10ex}

\parindent=8mm

\begin{abstract}
\noindent
Let $D=(G,\mathcal{O},w)$ be a weighted oriented graph whose edge ideal is $I(D)$. In this paper, we characterize the unmixed property of $I(D)$ for each one of the following cases: $G$ is an $SCQ$ graph; $G$ is a chordal graph; $G$ is a simplicial graph; $G$ is a perfect graph; $G$ has no $4$- or $5$-cycles; $G$ is a graph without $3$- and $5$-cycles; and ${\rm girth}(G)\geqslant 5$.
\end{abstract}

\noindent
{\small\textbf{Keywords:} Weighted oriented graphs, edge ideal, unmixed ideal, $SCQ$ graph, perfect graph.}

\section{Introduction}
A {\it weighted oriented graph\/} $D$ is a triplet $(G,\mathcal{O},w)$, where $G$ is a simple graph whose vertex set is $V(G)$; $\mathcal{O}$ is an edge orientation of $G$ (an assignment of a direction to each edge of $G$); and $w$ is a function $w:V(G) \to\mathbb{N}$. In this case, $G$ is called the {\it underlying graph\/} of $D$. The vertex set of $D$ is $V(D):=V(G)$, the edge set of $D$, denoted by $E(D)$, is the set of oriented edges of the oriented graph $(G,\mathcal{O})$. The \emph{weight} of $x\in V(D)$ is $w(x)$ and we denote the set $\{x\in V(D)\mid w(x)>1\}$ by $V^{+}$. If $R=K[x_{1},\ldots, x_{n}]$ is a polynomial ring over a field $K$, then the edge ideal of $D$ is $I(D)=(x_{i}x_{j}^{w(x_{j})}\mid (x_i,x_j)\in E(D))$ where $V(D)=\{ x_1,\ldots ,x_n\}$. These ideals (introduced in \cite{WOG}) generalize the usual edge ideals of graphs (see \cite{Villa}), since if $w(x)=1$ for each $x\in V(D)$, then $I(D)$ is the edge ideal of $G$, i.e. $I(D)=I(G)$. An interest in $I(D)$ comes from coding theory in some studies of Reed--Muller typed codes, (see \cite{Carvalho,pit-villa}). Furthermore, some algebraic and combinatorial invariants and proper\-ties of $I(D)$ have been studied in some papers \cite{vivares,reyes-villa,WOG,V-R-P,Zhu}. In particular, in \cite{WOG} is given a characterization of the irredundant irreducible decomposition of $I(D)$. This characterization permits studying when $I(D)$ is unmixed, using the strong vertex covers (Definition \ref{strong-cover-defn} and Theorem \ref{theorem42}). The unmixed property of $I(D)$ have been studied when $G$ is one of the following graphs: cycles in \cite{WOG}; graphs with whiskers in \cite{vivares,WOG}; bipartite graphs in \cite{reyes-villa,WOG}; graphs without $3$- $5$- and $7$-cycles and K\"onig graphs in \cite{V-R-P}.

\noindent
In this paper, we study the unmixed property of $I(D)$ for some families of weighted oriented graphs. In Section 2, we give the known results and definitions that we will use in the following sections. In Section 3 (in Theorem \ref{theorem-oct29}), we characterize when a subset of $V(D)$ is contained in a strong vertex cover. Using this result, we characterize the unmixed property of $I(D)$, when $G$ is a perfect graph (see Theorem \ref{Perf-Unm}). In Section 4 (in Theorem \ref{SCQ-char}), we characterize the unmixed property of $I(D)$ when $G$ is an $SCQ$ graph (see Definition \ref{CondSCQ}). These graphs generalize the graph defined in \cite{Rand} and in the context of this paper, they are important because if $G$ is well--covered such that $G$ is simplicial, $G$ is chordal or $G$ has no some small cycles, then $G$ is an $SCQ$ graph (see Remark \ref{rem-nov13} and Theorem \ref{wellcovered-characterization2}). In \cite{SCQ-ivan}, using the $SCQ$ graphs the authors characterize the vertex decomposable property of $G$ when each $5$-cycle of $G$ has at least $4$ chords. Also, in Section 4, we characterize the unmixed property of $I(D)$ when $G$ is K\"oning, $G$ is simplicial or $G$ is chordal (see Corollaries \ref{Koning-Unm} and \ref{Simp-Chor-Unm}). In Section 5, we characterize the unmixed property of $I(D)$, when $G$ has no $3$- or $5$-cycles; or $G$ has no $4$- or $5$-cycles; or $G$ has girth greater than 4 (see Theorems \ref{theorem-oct31}, \ref{No4,5Cyc-Unmix} and \ref{No3,4,5Cyc-Unmix}). Finally, in Section 6, we give some examples. Our results generalize the results about the unmixed property of $I(D)$ given in \cite{vivares,reyes-villa,WOG,V-R-P}, since if $G$ is well-covered and $G$ is one of the following graphs: cycles, graphs with whiskers, bipartite graphs, K\"oning graphs, or graphs without $3$-, $5$- and $7$-cycles, then $G$ is an $SCQ$ graph.

\section{Preliminaries}
In this Section, we give some definitions and well-known results that we will use in the following sections. Let $D=(G,\mathcal{O}, w)$ be a weighted oriented graph, recall that $V^{+}=\{ x\in V(D) \mid w(x)>1\}$ and $I(D)=\big( x_ix_{j}^{w(x_j)} \mid (x_i,x_j)\in E(D)\big)$.

\begin{definition}\rm
Let $x$ be a vertex of $D$, the sets 
$$N_{D}^{+}(x):=\{y\mid(x,y)\in E(D)\} \quad {\rm and} \quad N_{D}^{-}(x):=\{y\mid(y,x)\in E(D)\}$$ are called the {\it out-neighbourhood\/} and the {\it in-neighbourhood\/} of $x$, respectively. The {\it neighbourhood\/} of $x$ is the set $N_{D}(x):=N_{D}^{+}(x)\cup N_{D}^{-}(x)$. Furthermore, $N_D[x]:=N_D(x)\cup \{ x\}$. Also, if $A\subseteq V(D)$ then $N_D(A):=\{ b\in V(D)\mid b\in N_D(a) \ {\rm for \ some} \ a\in A\}$.
\end{definition}

\begin{definition}\rm\label{Sink-source}
Let $x$ be a vertex of $D$. If $N_{D}^{+}(x)=\emptyset$, then $x$ is called a {\it sink\/}. On the other hand, $x$ is a {\it source\/} if $N_{D}^{-}(x)=\emptyset$.
\end{definition}

\begin{remark}\rm\label{rem-V-R-P}
Consider the weighted oriented graph $\tilde{D}=(G,\mathcal{O},\tilde{w})$ with $\tilde{w}(x)=1$ if $x$ is a source and $\tilde{w}(x)=w(x)$ if $x$ is not a source. Hence, $I(\tilde{D})=I(D)$. Therefore, in this paper, we assume that if $x$ is a source, then $w(x)=1$. 
\end{remark}

\begin{definition}\rm
The {\it degree of $x\in V(D)$\/} is $deg_G(x):=|N_D(x)|$ and $N_G(x):=N_D(x)$. 	 
\end{definition}

\begin{definition}\rm
A {\it vertex cover\/} $\mathcal{C}$ of $D$ (resp. of $G$) is a subset of $V(D)$ (resp. of $V(G)$), such that if $(x,y)\in E(D)$ (resp. $\{ x,y\} \in E(G)$), then $x\in \mathcal{C}$ or $y\in \mathcal{C}$. A vertex cover $\mathcal{C}$ of $D$ is {\it minimal\/} if each proper subset of $\mathcal{C}$ is not a vertex cover of $D$.
\end{definition}

\begin{remark}\rm\label{Compl-einC}
Let $\mathcal{C}$ be a vertex cover of $D$ and $e\in E(G)$. Then, $\mathcal{C} \cap e\neq \emptyset$. Furthermore, $e\cap (\mathcal{C}\setminus a)\neq \emptyset$ if $a\notin e$, $b\in N_D(a)$ and $e=\{ a,b\}$. Hence, $(\mathcal{C} \setminus a)\cup N_D(a)$ is a vertex cover of $D$.
\end{remark}

\begin{definition}\rm\label{L-sets}
Let $\mathcal{C}$ be a vertex cover of $D$, we define the following three sets: 
\begin{itemize}[noitemsep]
\item $L_1(\mathcal{C}):=\{x\in \mathcal{C}\mid N_{D}^{+}(x)\cap \mathcal{C}^{c}\neq \emptyset \}$ where $\mathcal{C}^{c}=V(D)\setminus \mathcal{C}$,
\item $L_2(\mathcal{C}):=\{x\in \mathcal{C}\mid\mbox{$x\notin L_1(\mathcal{C})$ and $N^{-}_{D}(x)\cap \mathcal{C}^c\neq\emptyset$}\}$,
\item $L_3(\mathcal{C}):=\mathcal{C}\setminus(L_1(\mathcal{C})\cup L_2(\mathcal{C}))$.
\end{itemize}
\end{definition}

\begin{remark}\rm\label{einC}
If $\mathcal{C}$ is a vertex cover of $G$, $x\in V(G)\setminus \mathcal{C}$ and $y\in N_G(x)$, then $e:=\{ x,y\} \in E(G)$ and $e\cap \mathcal{C} \neq \emptyset$. So, $y\in \mathcal{C}$, since $x\notin \mathcal{C}$. Hence, $N_G(x)\subseteq \mathcal{C}$.
\end{remark}

\begin{remark}\rm\label{VertexL3}
Let $\mathcal{C}$ be a vertex cover of $D$, then $x\in L_3(\mathcal{C})$ if and only if $N_{D}[x]\subseteq \mathcal{C}$. Hence, $L_3(\mathcal{C})= \emptyset$ if and only if $\mathcal{C}$ is minimal.
\end{remark}

\begin{definition}\rm\label{strong-cover-defn}
A vertex cover $\mathcal{C}$ of $D$ is {\it strong\/} if for each $x\in L_3(\mathcal{C})$ there is $(y,x)\in E(D)$ such that $y\in L_2(\mathcal{C})\cup L_{3}(\mathcal{C})=\mathcal{C} \setminus L_1(\mathcal{C})$ with $y\in V^{+}$ (i.e. $w(y)>1$).
\end{definition}

\begin{definition}\rm
An ideal $I$ of a ring $R$ is {\it unmixed\/} if each one of its associated primes has the same height. 
\end{definition}
 
\begin{theorem}\label{theorem42}{\rm \cite[Theorem 31]{WOG}}
The following conditions are equivalent:
\begin{enumerate}[noitemsep]
\item[{\rm (1)}] $I(D)$ is unmixed.
\item[{\rm (2)}] Each strong vertex cover of $D$ has the same cardinality. 
\item[{\rm (3)}] $I(G)$ is unmixed and $L_{3}(\mathcal{C})=\emptyset$ for each strong vertex cover $\mathcal{C}$ of $D$.
\end{enumerate}
\end{theorem}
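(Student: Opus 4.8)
The whole argument runs through the irredundant irreducible decomposition of the monomial ideal $I(D)$, so the first and main task is to pin that decomposition down. To a vertex cover $\mathcal{C}$ of $D$ I attach the irreducible monomial ideal
\[
\mathfrak{q}_{\mathcal{C}} := \bigl(\, x \mid x\in L_1(\mathcal{C}) \,\bigr) + \bigl(\, x^{w(x)} \mid x\in L_2(\mathcal{C})\cup L_3(\mathcal{C}) \,\bigr),
\]
whose radical is the monomial prime $\mathfrak{p}_{\mathcal{C}} := (\,x \mid x\in\mathcal{C}\,)$, of height $|\mathcal{C}|$. The inclusion $I(D)\subseteq\mathfrak{q}_{\mathcal{C}}$ is checked edge by edge: for $(x_i,x_j)\in E(D)$, if $x_j\in\mathcal{C}$ then $x_j$ occurs in $\mathfrak{q}_{\mathcal{C}}$ with exponent $1$ or $w(x_j)$, so $x_ix_j^{w(x_j)}\in\mathfrak{q}_{\mathcal{C}}$; while if $x_j\notin\mathcal{C}$ then $x_i\in\mathcal{C}$ and $x_i\in L_1(\mathcal{C})$, so $x_ix_j^{w(x_j)}\in(x_i)\subseteq\mathfrak{q}_{\mathcal{C}}$. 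A short argument on monomials gives the reverse inclusion, so $I(D)=\bigcap_{\mathcal{C}}\mathfrak{q}_{\mathcal{C}}$ with $\mathcal{C}$ ranging over all vertex covers of $D$.

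The decisive step is cutting this intersection down to the strong covers. If $\mathcal{C}$ is not strong, choose $x\in L_3(\mathcal{C})$ witnessing the failure, i.e. with $w(y)=1$ for every $y\in N_D^{-}(x)\cap\bigl(L_2(\mathcal{C})\cup L_3(\mathcal{C})\bigr)$, and put $\mathcal{C}':=\mathcal{C}\setminus\{x\}$. Since $N_D[x]\subseteq\mathcal{C}$ (Remark \ref{VertexL3}), $\mathcal{C}'$ is still a vertex cover; and when $x$ is deleted only the neighbours of $x$ can change class, the only potentially harmful move being a vertex $y\in N_D^{-}(x)$ passing into $L_1(\mathcal{C}')$ — but for such a $y$ the choice of $x$ forces $w(y)=1$, so $y$ carries exponent $1$ in both $\mathfrak{q}_{\mathcal{C}}$ and $\mathfrak{q}_{\mathcal{C}'}$. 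Hence every variable $\neq x$ keeps its exponent, $\mathfrak{q}_{\mathcal{C}'}\subseteq\mathfrak{q}_{\mathcal{C}}$, and $\mathfrak{q}_{\mathcal{C}}$ may be dropped; iterating, only strong covers survive. In the opposite direction, for a strong $\mathcal{C}$ one must produce a monomial lying in $\mathfrak{q}_{\mathcal{C}'}$ for every strong $\mathcal{C}'\neq\mathcal{C}$ but outside $\mathfrak{q}_{\mathcal{C}}$; it is built from the vertices outside $\mathcal{C}$ together with the factors $x^{w(x)-1}$ for $x\in L_2(\mathcal{C})\cup L_3(\mathcal{C})$, and checking it works is where the weight condition ``$w(y)>1$'' of Definition \ref{strong-cover-defn} enters essentially. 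Together these show $I(D)=\bigcap_{\mathcal{C}\ \mathrm{strong}}\mathfrak{q}_{\mathcal{C}}$ is an irredundant irreducible decomposition; by uniqueness of such decompositions for monomial ideals, $\operatorname{Ass}(R/I(D))=\{\mathfrak{p}_{\mathcal{C}}\mid\mathcal{C}\text{ a strong vertex cover of }D\}$.

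Now (1)$\Leftrightarrow$(2) is immediate, since $\operatorname{ht}\mathfrak{p}_{\mathcal{C}}=|\mathcal{C}|$: $I(D)$ is unmixed precisely when all these primes have equal height, i.e. when all strong vertex covers of $D$ have the same cardinality. For (2)$\Leftrightarrow$(3) I use that $L_3(\mathcal{C})=\emptyset$ iff $\mathcal{C}$ is a minimal vertex cover (Remark \ref{VertexL3}); that the minimal vertex covers of $D$ are exactly those of $G$, all of which are strong (the condition in Definition \ref{strong-cover-defn} being vacuous when $L_3(\mathcal{C})=\emptyset$); and that the squarefree ideal $I(G)$ is unmixed iff all minimal vertex covers of $G$ have the same cardinality. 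Assuming (2): the minimal vertex covers of $G$, being strong, are equicardinal, so $I(G)$ is unmixed; moreover a strong $\mathcal{C}$ with $L_3(\mathcal{C})\neq\emptyset$ would be non-minimal, hence would properly contain a strong minimal cover of strictly smaller size, contradicting (2); so $L_3(\mathcal{C})=\emptyset$ for every strong $\mathcal{C}$, giving (3). Assuming (3): every strong cover is minimal, hence a minimal vertex cover of $G$, and unmixedness of $I(G)$ forces them all to have the same cardinality, giving (2).

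The main obstacle is the middle step — proving that the irredundant components are exactly the $\mathfrak{q}_{\mathcal{C}}$ with $\mathcal{C}$ strong. The ``non-strong $\Rightarrow$ redundant'' half is a careful but finite bookkeeping of how $L_1,L_2,L_3$ react to deleting a single vertex; the ``strong $\Rightarrow$ irredundant'' half is harder, since the witness monomial must simultaneously avoid $\mathfrak{q}_{\mathcal{C}}$ and land in every other strong $\mathfrak{q}_{\mathcal{C}'}$ — including those $\mathcal{C}'$ that are proper subsets of $\mathcal{C}$ — and it is exactly there that the strength hypothesis (an in-neighbour in $L_2(\mathcal{C})\cup L_3(\mathcal{C})$ of weight $>1$ for each vertex of $L_3(\mathcal{C})$) makes the verification go through. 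Everything after the decomposition is routine.
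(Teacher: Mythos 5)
This theorem is imported verbatim from \cite[Theorem 31]{WOG}; the paper offers no proof of its own, so the only meaningful comparison is with the cited source, and your architecture --- exhibit $I(D)$ as an irredundant irreducible decomposition $\bigcap_{\mathcal{C}}\mathfrak{q}_{\mathcal{C}}$ indexed by the strong vertex covers, read off $\operatorname{Ass}(R/I(D))=\{\mathfrak{p}_{\mathcal{C}}\mid \mathcal{C}\ \text{strong}\}$, and then settle ${\rm (2)}\Leftrightarrow{\rm (3)}$ using that $L_3(\mathcal{C})=\emptyset$ characterises minimality and that minimal covers are strong --- is precisely the route of \cite{WOG}. The parts you actually execute are correct: the containment $I(D)\subseteq\mathfrak{q}_{\mathcal{C}}$, the reduction of non-strong covers (deleting a vertex $x\in L_3(\mathcal{C})$ violating Definition \ref{strong-cover-defn} only promotes in-neighbours $y$ of $x$ with $w(y)=1$ into $L_1$, so $\mathfrak{q}_{\mathcal{C}\setminus\{x\}}\subseteq\mathfrak{q}_{\mathcal{C}}$), and the final translation between (1), (2) and (3).

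The genuine gap is the irredundancy half, which you rightly call the crux but then only gesture at, and the witness monomial as you describe it is wrong. Take $D$ a single edge $(x,y)$ with $w(y)=2$: the strong covers are $\{x\}$ and $\{y\}$, with $\mathfrak{q}_{\{x\}}=(x)$ and $\mathfrak{q}_{\{y\}}=(y^2)$, and for $\mathcal{C}=\{x\}$ your recipe (``vertices outside $\mathcal{C}$'' times $\prod z^{w(z)-1}$ over $L_2\cup L_3$) yields $m=y$, which avoids $(x)$ but does not lie in $(y^2)$, so it witnesses nothing. The vertices outside $\mathcal{C}$ must carry their weights: with
\[
m=\prod_{z\notin\mathcal{C}}z^{w(z)}\cdot\prod_{z\in L_2(\mathcal{C})\cup L_3(\mathcal{C})}z^{w(z)-1}
\]
one has $m\notin\mathfrak{q}_{\mathcal{C}}$ by construction; $m\in\mathfrak{q}_{\mathcal{C}'}$ for any cover $\mathcal{C}'\not\subseteq\mathcal{C}$ because any $z\in\mathcal{C}'\setminus\mathcal{C}$ contributes $z^{w(z)}$ to $m$; and for $\mathcal{C}'\subsetneq\mathcal{C}$ any $v\in\mathcal{C}\setminus\mathcal{C}'$ satisfies $N_D(v)\subseteq\mathcal{C}'\subseteq\mathcal{C}$, hence $v\in L_3(\mathcal{C})$, so strength of $\mathcal{C}$ provides $(y,v)\in E(D)$ with $y\in(L_2(\mathcal{C})\cup L_3(\mathcal{C}))\cap V^{+}$; this $y$ lies in $L_1(\mathcal{C}')$ and divides $m$ since $w(y)-1\geqslant 1$. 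Without this step the implication ${\rm (1)}\Rightarrow{\rm (2)}$ is unproved, because a redundant component need not contribute an associated prime, and then a strong cover of deviant cardinality would not contradict unmixedness. Everything else in your proposal stands once this is repaired.
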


\begin{definition}\rm
The {\it cover number\/} of $G$ is $\tau(G):={\rm min \ }\{ |\mathcal{C}|\ \mid \mathcal{C} \ {\rm \ is\ a\ vertex\ cover\ of\ } G\}$. Furthermore, a {\it $\tau$-reduction\/} of $G$ is a collection of pairwise disjoint induced subgraphs $H_1,\ldots ,H_s$ of $G$ such that $V(G)=\cup_{i=1}^{s} V(H_i)$ and $\tau(G)=\sum_{i=1}^{s} \tau(H_i)$.
\end{definition}

\begin{remark}\rm\label{MinimalStrongProp}
We have $\tau (G)=|\mathcal{C}_1|$, for some vertex cover $\mathcal{C}_1$. So, $\mathcal{C}_1$ is minimal. Thus, by Remark \ref{VertexL3}, $L_3(\mathcal{C}_1)=\emptyset$. Hence, $\mathcal{C}_1$ is strong. Now, if $I(D)$ is unmixed, then by {\rm (2)} in Theorem \ref{theorem42}, $|\mathcal{C}|=|\mathcal{C}_1|=\tau (G)$ for each strong vertex cover $\mathcal{C}$ of $D$.
\end{remark}

\begin{definition}\rm
A {\it stable set\/} of $G$ is a subset of $V(G)$ containing no edge of $G$. The {\it stable number of $G$\/}, denoted by $\beta(G)$, is $\beta(G):={\rm max \ } \{ |S|\ \mid S {\rm \ is\ a\ stable\ set\ of\ }G\}$. Furthermore $G$ is {\it well--covered\/} if $|S|=\beta(G)$ for each maximal stable set $S$ of $G$.
\end{definition}

\begin{remark}\rm\label{tau-beta}
$S$ is a stable set of $G$ if and only if $V(G)\setminus S$ is a vertex cover. Hence, $\tau (G)=|V(G)|-\beta (G)$.
\end{remark}

\begin{remark}\rm\label{1star}{\rm \cite[Remark 2.12]{V-R-P}}
$G$ is well-covered if and only if $I(G)$ is unmixed.
\end{remark}

\begin{definition}\rm
A collection of pairwise disjoint edges of $G$ is called a {\it matching\/}. A {\it perfect matching\/} is a matching whose union is $V(G)$. On the other hand, $G$ is a {\it K\"{o}nig graph\/} if $\tau(G) =\nu(G)$ where $\nu(G)$ is the maximum cardinality of a matching of $G$.
\end{definition}

\begin{definition}\rm\label{defi-VPR2.14}
Let $e$ be an edge of $G$. If $\{a,a^{\prime}\}\in E(G)$ for each pair of edges, $\{a,b\}$, $\{a^{\prime},b^{\prime}\}\in E(G)$ and $e=\{b,b^{\prime}\}$, then we say that $e$ {\it has the property \bf{(P)}\/}. On the other hand, we say that a matching $P$ of $G$ {\it has the property \bf{(P)}\/} if each edge of $P$ has the property \bf{(P)}.
\end{definition}

\begin{theorem}\label{Koning-Char}{\rm \cite[Proposition 15]{Ivan-Reyes}}
If $G$ is a K\"oning graph without isolated vertices, then $G$ is well--covered if and only if $G$ has a perfect matching with the property {\bf(P)}.
\end{theorem}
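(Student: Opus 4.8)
The plan is to reformulate ``well--covered'' in its definitional form — every maximal stable set of $G$ has cardinality $\beta(G)$ — and to use that, under the König hypothesis with no isolated vertices, $\tau(G)=\nu(G)$ and hence $\beta(G)=|V(G)|-\nu(G)$ by Remark~\ref{tau-beta}. Everything will hinge on two small observations. \emph{(A)} Every maximum matching of $G$ saturates every minimum vertex cover: if $M$ is a maximum matching and $\mathcal C$ a minimum vertex cover, the sets $e\cap\mathcal C$ for $e\in M$ are pairwise disjoint and nonempty, so $|M|\le\sum_{e\in M}|e\cap\mathcal C|\le|\mathcal C|$; since $|M|=\nu(G)=\tau(G)=|\mathcal C|$, equality holds throughout, forcing $|e\cap\mathcal C|=1$ for each $e\in M$ and $\bigcup_{e\in M}(e\cap\mathcal C)=\mathcal C$, i.e. $M$ saturates every vertex of $\mathcal C$. \emph{(B)} If $G$ is well--covered then every vertex lies in some minimum vertex cover: for $v\in V(G)$ pick $w\in N_G(v)$ (possible, as there are no isolated vertices) and extend $\{w\}$ to a maximal stable set $S$; then $|S|=\beta(G)$, $v\notin S$, and $V(G)\setminus S$ is a vertex cover of cardinality $|V(G)|-\beta(G)=\tau(G)$ containing $v$.

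For the direction ``well--covered $\Rightarrow$ perfect matching with {\bf(P)}'', I would first deduce that $G$ has a perfect matching: if a maximum matching $M$ left a vertex $u$ unsaturated, then by (B) $u$ would lie in a minimum vertex cover, which by (A) is saturated by $M$ — contradiction. Thus $|V(G)|=2\nu(G)=:2m$ and $\beta(G)=\tau(G)=m$. Fixing a perfect matching $M$, I would then show each $e=\{b,b'\}\in M$ has property {\bf(P)} via the following key remark: since $M$ partitions $V(G)$ into $m$ edges and a stable set meets each of them in at most one vertex, any maximal stable set avoiding \emph{both} endpoints of some edge of $M$ would have cardinality $\le m-1<\beta(G)$, which is impossible. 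Hence $b$ and $b'$ cannot have a common neighbour $z$ (else extend $\{z\}$ to a maximal stable set avoiding $b,b'$), and for any edges $\{a,b\},\{a',b'\}\in E(G)$ — where we may assume $a,a'\notin\{b,b'\}$, otherwise $\{a,a'\}$ equals $\{a,b\}$ or $\{a',b'\}$ and is already an edge — we cannot have $\{a,a'\}\notin E(G)$ (else extend the stable set $\{a,a'\}$ to a maximal stable set avoiding $b,b'$). So $\{a,a'\}\in E(G)$, and $e$ has property {\bf(P)}.

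For the converse, given a perfect matching $P$ with property {\bf(P)}, note $\nu(G)=|V(G)|/2=:m$ (a matching cannot be larger and $P$ attains this), hence $\tau(G)=m$ by König and $\beta(G)=m$. If $S$ is a maximal stable set then $|S|\le m$ (it meets each edge of $P$ at most once), and I would rule out $|S|<m$ as follows: then $S$ avoids both endpoints of some $\{b,b'\}\in P$, maximality of $S$ gives $a\in N_G(b)\cap S$ and $a'\in N_G(b')\cap S$, and since $b,b'\notin S$ we get $a,a'\notin\{b,b'\}$, so property {\bf(P)} of $\{b,b'\}$, applied to the edges $\{a,b\},\{a',b'\}$, yields $\{a,a'\}\in E(G)$ (in particular $a\neq a'$), contradicting the stability of $S$. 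Hence $|S|=m=\beta(G)$, and $G$ is well--covered.

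I expect the only genuinely delicate point to be the bookkeeping around Definition~\ref{defi-VPR2.14}: one must notice that property {\bf(P)} for an edge $\{b,b'\}$ already forbids $b$ and $b'$ from having a common neighbour (choosing $\{a,b\}$ and $\{a',b'\}$ with $a=a'$ would demand a loop), since this is exactly what makes the ``avoid both endpoints of an $M$-edge'' argument close in both directions. The remaining steps are the elementary count in (A), the extension argument in (B), and the observation that a stable set has size at most the number of edges of a perfect matching.
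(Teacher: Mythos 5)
Your argument is correct. Note, however, that the paper does not prove this statement at all: Theorem~\ref{Koning-Char} is imported verbatim from \cite[Proposition 15]{Ivan-Reyes}, so there is no in-paper proof to compare against; what you have supplied is an independent, self-contained proof of the cited result. The proof itself checks out at every step: observation (A) is the standard counting argument showing that under $\tau(G)=\nu(G)$ every maximum matching saturates every minimum vertex cover with $|e\cap\mathcal C|=1$; observation (B) correctly uses well-coveredness and the absence of isolated vertices to place each vertex in a minimum cover, and together these force a perfect matching. The ``avoid both endpoints of a matching edge'' remark is exactly the right pivot for both directions, and you are right that the only delicate point is the degenerate case $a=a'$ in Definition~\ref{defi-VPR2.14} (property {\bf(P)} forbids common neighbours of $b$ and $b'$, since $\{a,a\}$ is not an edge), which you handle explicitly in the forward direction and which is implicitly confirmed in the converse by the conclusion $a\neq a'$. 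One minor observation: in the converse direction the K\"onig hypothesis is not actually needed --- showing that every maximal stable set meets every edge of $P$ in exactly one vertex already gives that all maximal stable sets have the common size $|P|$ --- so your detour through $\tau(G)=\nu(G)$ to compute $\beta(G)$ is harmless but dispensable there; the K\"onig hypothesis is genuinely used only in the forward direction, consistent with the fact that the converse implication holds for arbitrary graphs.
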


\begin{definition}\rm 
$\mathcal{P}=(x_1,\ldots ,x_n)$ is a {\it walk\/} (resp. an {\it oriented walk\/}) if $\{ x_i,x_{i+1}\} \in E(G)$ for $i=1,\ldots ,n-1$. In this case, $\mathcal{P}$ is a {\it path\/} (resp. an {\it oriented path\/}) if $x_1,\ldots ,x_n$ are different. On the other hand, a walk (resp. an oriented walk), $C=(z_1,z_2,\ldots ,z_n,z_1)$ is a {\it $n$-cycle\/} (resp. an {\it oriented $n$-cycle\/}) if $(z_1,\ldots ,z_n)$ is a path (resp. is an oriented path).
\end{definition}

\begin{definition}\rm
Let $A$ be a subset of $V(G)$, then the {\it graph induced by $A$\/}, denoted by $G[A]$, is the subgraph $G_1$ of $G$ with $V(G_1)=A$ and $E(G_1)=\{ e\in E(G)\mid e\subseteq A\}$. On the other hand, a subgraph $H$ of $G$ is {\it induced\/} if there is $B\subseteq V(G)$ such that $H=G[B]$.

\noindent
A cycle $C$ of $G$ is {\it induced\/} if $C$ is an induced subgraph of $G$.
\end{definition}

\begin{definition}\rm
A weighted oriented graph $D^{\prime}=(G^{\prime},\mathcal{O}^{\prime},w^{\prime})$ is a {\it weighted oriented subgraph of  $D=(G,\mathcal{O},w)$\/}, if $(G^{\prime},\mathcal{O}^{\prime})$ is an oriented subgraph of $(G,\mathcal{O})$  and $w^{\prime}(x)=w(x)$ for each $x\in V(G^{\prime})$. Furthermore, $D^{\prime}$ is an {\it induced weighted oriented subgraph of  $D$\/} if $G^{\prime}$ is an induced subgraph of $G$.
\end{definition}

\begin{definition}\rm
A vertex $v$ is called {\it simplicial} if the induced subgraph $H=G[N_{G}[v]]$ is a complete graph with $k=|V(H)|-1$, in this case, $H$ is called $k$-{\it simplex} (or {\it simplex}). The set of simplexes of $G$ is denoted by $S_G$. $G$ is a {\it simplicial graph\/} if every vertex of $G$ is a simplicial vertex of $G$ or is adjacent to a simplicial vertex of $G$. 
\end{definition}

\begin{definition}\rm
The minimum length of a cycle (contained) in a graph $G$, is called the {\it girth of $G$\/}. On the other hand, $G$ is a {\it chordal graph\/} if the induced cycles are $3$-cycles.
\end{definition}

\begin{theorem}{\rm \cite[Theorems 1 and 2]{Prisner}}\label{T1-T2Prisner}
If $G$ is a chordal or simplicial graph, then $G$ is well-covered if and only if every vertex of $G$ belongs to exactly one simplex of $G$. 
\end{theorem}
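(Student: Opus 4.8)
The plan is to establish the two implications separately; only the forward direction (well-coveredness $\Rightarrow$ the simplex condition) will use that $G$ is chordal or simplicial, and even there only for one half of the argument. For the easy direction, suppose every vertex of $G$ lies in exactly one simplex and let $H_1,\dots,H_s$ be the distinct simplexes. Their vertex sets are pairwise disjoint (a common vertex would lie in two simplexes) and cover $V(G)$, hence partition it. Pick for each $i$ a simplicial vertex $v_i\in V(H_i)$, so $N_G[v_i]=V(H_i)$ and $N_G(v_i)=V(H_i)\setminus\{v_i\}$. If $S$ is any maximal stable set of $G$, then $|S\cap V(H_i)|\le 1$ since $V(H_i)$ induces a complete graph, so $|S|\le s$; and if $S\cap V(H_i)=\emptyset$ for some $i$, then $N_G(v_i)$ is disjoint from $S$, so $S\cup\{v_i\}$ is stable, contradicting maximality. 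Hence $S$ meets every $V(H_i)$ and $|S|=s$, so $G$ is well-covered; no assumption on $G$ is used here.

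\emph{Forward direction, at most one simplex per vertex.} Assume $G$ is well-covered and suppose, for a contradiction, that some $z$ lies in two distinct simplexes generated by simplicial vertices $a$ and $b$, so $z\in N_G[a]\cap N_G[b]$ and $N_G[a]\ne N_G[b]$. Clearly $a\ne b$. If $a\in N_G[b]$ then $a\sim b$, hence $b\in N_G[a]$; since $a$ is simplicial every vertex of $N_G[a]$ is adjacent to $b$, so $N_G[a]\subseteq N_G[b]$, and symmetrically $N_G[b]\subseteq N_G[a]$, forcing $N_G[a]=N_G[b]$, a contradiction. So $a\notin N_G[b]$ and likewise $b\notin N_G[a]$; consequently $a\not\sim b$, $z\ne a$, $z\ne b$, $z\sim a$ and $z\sim b$. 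Extend $\{z\}$ to a maximal stable set $M$; since $N_G[a]$ and $N_G[b]$ are cliques containing $z$, $M\cap N_G[a]=M\cap N_G[b]=\{z\}$, so $a,b\notin M$. Then $M':=(M\setminus\{z\})\cup\{a,b\}$ has $|M|+1$ elements and is stable, because $a\not\sim b$ and because $N_G(a)\subseteq N_G[a]$ and $N_G(b)\subseteq N_G[b]$ are disjoint from $M\setminus\{z\}$; extending $M'$ to a maximal stable set yields one strictly larger than $M$, contradicting well-coveredness. Hence every vertex of $G$ lies in at most one simplex.

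\emph{Forward direction, at least one simplex per vertex.} If $G$ is simplicial this is immediate: a simplicial vertex lies in its own simplex, and a vertex adjacent to a simplicial vertex $u$ lies in $N_G[u]$. Suppose instead $G$ is chordal and argue by induction on $|V(G)|$, the complete case being trivial. Choose a simplicial vertex $v$, set $H:=G[N_G[v]]$ and $G':=G-V(H)$. Then $G'$ is chordal, and it is well-covered with $\beta(G')=\beta(G)-1$: for a maximal stable set $S'$ of $G'$, the set $S'\cup\{v\}$ is stable (as $N_G(v)\subseteq V(H)$) and maximal in $G$ (a vertex outside it lies in $V(G')$ and is dominated by $S'$, or lies in $N_G(v)$), so $|S'|+1=\beta(G)$. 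By the induction hypothesis and the previous paragraph applied to $G'$, the distinct simplexes of $G'$ partition $V(G')$, and choosing one simplicial generator from each shows $\beta(G')$ equals their number. It now suffices to prove that each simplex $Q=G'[N_{G'}[v']]$ of $G'$ (with $v'$ simplicial in $G'$) is a simplex of $G$, since then these simplexes together with $H$ cover $V(G')\cup V(H)=V(G)$. If $v'$ has no neighbour in $V(H)$, then $N_G[v']=N_{G'}[v']$ is a clique and $Q$ is a simplex of $G$. The only remaining possibility is that $v'$ has a neighbour $w\in V(H)$, necessarily with $w\in N_G(v)$ since $v'\notin N_G[v]$.

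\emph{The main obstacle} is ruling out this last case, and this is the only place the structure of $G$ enters nontrivially. Well-coveredness of $G$ itself, not merely of $G'$, is essential: the ``paw'' (a triangle with a pendant edge) shows that the configuration $w\in N_G(v)\cap N_G(v')$ does occur in chordal graphs, and there $w$ lies in two distinct simplexes, so that graph is not well-covered — this is precisely the obstruction the hypothesis forbids. I would exclude it by combining the partition $V(G)=V(H)\sqcup\bigsqcup_j V(Q^{(j)})$ into cliques (whence $\beta(G)$ equals the number of parts) with the fact that $v'$ is simplicial in $G'$ and with chordality, so as to produce from such a configuration a maximal stable set of $G$ of the wrong size, contradicting well-coveredness. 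Once this is done, the two halves of the forward direction give that every vertex of $G$ lies in exactly one simplex, which together with the easy direction completes the proof.
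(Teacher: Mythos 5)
This theorem is imported from Prisner--Topp--Vestergaard (\cite[Theorems 1 and 2]{Prisner}); the paper contains no proof of it, so your argument has to stand on its own. Most of it does: the ``if'' direction, the ``at most one simplex per vertex'' half of the ``only if'' direction, and the simplicial case of the ``at least one simplex'' half are all correct and complete.

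The gap is in the chordal case of the ``at least one simplex'' half, and it is not just an omitted computation: the step you defer is aimed at a false target. You reduce to the situation where $v'$ is a simplicial vertex of $G'=G-N_G[v]$ generating a simplex $Q$ of $G'$ and $v'$ has a neighbour $w\in N_G(v)$, and you propose to derive a contradiction with the well-coveredness of $G$ from this configuration alone. But this configuration occurs in well-covered chordal graphs. Take $G$ to be the path $a\,b\,c\,d$ (chordal and well-covered: all maximal stable sets have size $2$) with $v=a$, so that $G'=G[\{c,d\}]$. Then $c$ is simplicial in $G'$ and has the neighbour $b\in N_G(a)$, yet nothing goes wrong: the simplex $\{c,d\}$ of $G'$ is still a simplex of $G$, only generated by $d$ rather than by $c$. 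So no contradiction can be extracted from the mere existence of such a $v'$. What actually has to be proved is that every simplex $Q$ of $G'$ has \emph{some} generator $u$ (a vertex with $N_{G'}[u]=V(Q)$, simplicial in $G'$) with no neighbour in $N_G(v)$ --- equivalently, that $V(Q)$ is the vertex set of a simplex of $G$. That statement is true, but it is essentially the nontrivial content of the theorem in this configuration, and your proposal supplies neither it nor a workable route to it (one would have to argue, say, that if \emph{every} $G'$-generator of $Q$ had a neighbour in $N_G(v)$, then chordality lets one build a maximal stable set of $G$ of the wrong size). As written, the chordal case of the forward direction remains open.
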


\begin{definition}\rm\label{CondSCQ}
An induced $5$-cycle $C$ of $G$ is called {\it basic} if $C$ does not contain two adjacent vertices of degree three or more in $G$. $G$ is an $SCQ$ graph (or $G\in SCQ$) if $G$ satisfies the following conditions:
\begin{enumerate}[noitemsep]
\item[$(i)$] There is $Q_G$ such that $Q_G=\emptyset$ or $Q_G$ is a matching of $G$ with the property {\bf{(P)}}.
\item[$(ii)$] $\{V(H)\mid H\in S_G\cup C_G\cup Q_G\}$ is a partition of $V(G)$, where $C_G$ is the set of basic $5$-cycles. 
\end{enumerate}
\end{definition}

\noindent
In the following three results, we use the graphs of Figure \ref{specialgraphs}.

\begin{figure}[h]
\centering
\begin{tikzpicture}[dot/.style={draw,fill,circle,inner sep=1pt},scale=.82] 

%%%%%%%%%%%%%%%%%%%%%%%%%%%%%%%%%%%%%%%%%

  \foreach \l [count=\n] in {{},{},{},{},{},{},{}} {
   \pgfmathsetmacro\angle{90-360/7*(\n-1)}
      \node[dot,label={\angle:$\l$}] (n\n) at (\angle:1) {};
  }
  \draw (n6) -- (n7) -- (n1) -- (n2) -- (n3) -- (n4)--(n5)--(n6);
  \node (0) at (0,-1.5){$\mathbf{C_{7}}$};

%%%%%%%%%%%%%%%%%%%%%%%%%%%%%%%%%%%%%%%%%%%%

\node[draw,fill,circle,inner sep=1pt] (2) at (-5.8,.9){};
\node (12) at (-5.6,1.1){{\small $d_{\tiny 1}$}};
\node[draw,fill,circle,inner sep=1pt] (5) at (-5.8,-.5) {};
\node (15) at (-5.9,-.8){{\small $d_{\tiny 2}$}};
\node[draw,fill,circle,inner sep=1pt](3) at (-4.4,.9) {};
\node (13) at (-4.1,.9){{\small $b_{\tiny 2}$}};
\node[draw,fill,circle,inner sep=1pt] (6) at (-4.4,-.5){};
\node (16) at (-4.1,-.5){{\small $a_{\tiny 2}$}};
\node[draw,fill,circle,inner sep=1pt] (1) at (-7.2,.9){};
\node (11) at (-7.5,.9){{\small $a_{\tiny 1}$}};
\node[draw,fill,circle,inner sep=1pt] (4) at (-7.2,-.5) {};
\node (14) at (-7.5,-.5){{\small $b_{\tiny 1}$}};
\node[draw,fill,circle,inner sep=1pt] (7) at (-3,2.3){};
\node (17) at (-2.7,2.3){{\small $c_{\tiny 1}$}};
\node[draw,fill,circle,inner sep=1pt] (8) at (-3,-1.9){};
\node (18) at (-2.7,-2.1){{\small $c_{\tiny 2}$}};
\node[draw,fill,circle,inner sep=1pt] (10) at (-5.1,-1.9){};
\node (20) at (-5.1,-2.2){{\small $g_{\tiny 2}$}};
\node[draw,fill,circle,inner sep=1pt] (9) at (-6.5,2.3){};
\node (19) at (-6.5,2.6){{\small $g_{\tiny 1}$}};

\node (0) at (-6.5,-2){$\mathbf{P_{10}}$};
\node (21) at (-6.5,0.4){\textcolor{blue}{$C_{1}$}};
\node (22) at (-5.1,0){\textcolor{blue}{$C_{2}$}};
\node (23) at (-7.5,0.2){{\small $\mathbf{\tilde{e}_{\tiny 1}}$}};
\node (24) at (-4.1,0.2){{\small $\mathbf{\tilde{e}_{\tiny 2}}$}};
\node (25) at (-2.7,0.2){{\small $\mathbf{\tilde{e}_{\tiny 3}}$}};

\draw[-] (1) -- (4)-- (5)--(2)--(3)--(6);
\draw[-] (1) -- (9)--(2);
\draw[-] (9) -- (7)--(8)--(10)--(6);
\draw[-] (5) -- (10);

%%%%%%%%%%%%%%%%%%%%%%%%%%%%%

\node[draw,fill,circle,inner sep=1pt] (2) at (5.8,.8){};
\node (12) at (6.05,1.05){{\small $b_{\tiny 4}$}};
\node[draw,fill,circle,inner sep=1pt] (5) at (5.8,-.6) {};
\node (15) at (6.05,-.85){{\small $b_{\tiny 3}$}};
\node[draw,fill,circle,inner sep=1pt](3) at (4.7,.8) {};
\node (13) at (4.45,1.05){{\small $b_{\tiny 2}$}};
\node[draw,fill,circle,inner sep=1pt] (6) at (4.7,-.6){};
\node (16) at (4.55,-.85){{\small $b_{\tiny 1}$}};
\node[draw,fill,circle,inner sep=1pt] (1) at (6.9,.8){};
\node (11) at (7.2,.8){{\small $a_{\tiny 4}$}};
\node[draw,fill,circle,inner sep=1pt] (4) at (6.9,-.6) {};
\node (14) at (7.2,-.6){{\small $a_{\tiny 3}$}};
\node[draw,fill,circle,inner sep=1pt] (7) at (3.6,.8){};
\node (17) at (3.3,.8){{\small $a_{\tiny 2}$}};
\node[draw,fill,circle,inner sep=1pt] (8) at (3.6,-.6){};
\node (18) at (3.3,-.6){{\small $a_{\tiny 1}$}};
\node[draw,fill,circle,inner sep=1pt] (10) at (5.25,2.05){};
\node (20) at (5.25,2.35){{\small $d_{\tiny 2}$}};
\node[draw,fill,circle,inner sep=1pt] (9) at (5.25,-1.85){};
\node (19) at (5.25,-2.15){{\small $d_{\tiny 1}$}};
\node[draw,fill,circle,inner sep=1pt] (11) at (4.7,0.1){};
\node (21) at (4.93,0.28){{\small $c_{\tiny 1}$}};
\node[draw,fill,circle,inner sep=1pt] (12) at (5.8,0.1){};
\node (22) at (5.57,0.28){{\small $c_{\tiny 2}$}};

\node[draw,fill,circle,inner sep=1pt] (13) at (2.42,0.1){};
\node (23) at (2.24,0.1){{\small $v$}};

\node (0) at (6,-1.9){$\mathbf{P_{13}}$};
\node (24) at (4.15,0.1){\textcolor{blue}{\small $C_{1}$}};
\node (25) at (6.35,0.1){\textcolor{blue}{\small $C_{2}$}};
\node (26) at (3.3,0.1){{\small $\mathbf{\tilde{e}_{\tiny 1}}$}};
\node (27) at (7.2,0.1){{\small $\mathbf{\tilde{e}_{\tiny 2}}$}};

\draw[-] (1) -- (2)-- (5)--(4)--(1);
\draw[-] (7) -- (8)--(6)--(3)--(7);
\draw[-] (9) -- (6);
\draw[-](5)--(9);
\draw[-] (10) -- (3);
\draw[-](10)--(2);
\draw[-] (11) -- (12);
\draw[-] (10) .. controls (1.46,0.1) .. (9);

%%%%%%%%%%%%%%%%%%%%%%%%%%%%%%%%%%%%%%%%%%%%%

\node[draw,fill,circle,inner sep=1pt] (1) at (0,-3){};
\node (11) at (0,-2.75){{\small $v$}};
\node[draw,fill,circle,inner sep=1pt] (2) at (-1,-4.3){};
\node (12) at (-1.3,-4.3){{\small $a_{\tiny 1}$}};
\node[draw,fill,circle,inner sep=1pt] (3) at (1,-4.3){};
\node (13) at (1.35,-4.3){{\small $a_{\tiny 3}$}};
\node[draw,fill,circle,inner sep=1pt] (4) at (0,-4.1){};
\node (14) at (0.35,-4.1){{\small $a_{\tiny 2}$}};
\node[draw,fill,circle,inner sep=1pt] (5) at (-1,-5.6){};
\node (15) at (-1.3,-5.6){{\small $b_{\tiny 1}$}};
\node[draw,fill,circle,inner sep=1pt] (6) at (-1,-6.9){};
\node (16) at (-1.3,-6.9){{\small $c_{\tiny 1}$}};
\node[draw,fill,circle,inner sep=1pt] (7) at (1,-5.6){};
\node (17) at (1.35,-5.6){{\small $b_{\tiny 3}$}};
\node[draw,fill,circle,inner sep=1pt] (8) at (1,-6.9){};
\node (18) at (1.35,-6.9){{\small $c_{\tiny 3}$}};
\node[draw,fill,circle,inner sep=1pt] (9) at (0,-5.2){};
\node (19) at (0.35,-5.2){{\small $b_{\tiny 2}$}};
\node[draw,fill,circle,inner sep=1pt] (10) at (0,-6.2){};
\node (20) at (0.3,-6.15){{\small $c_{\tiny 2}$}};
\node (0) at (0,-7.5){$\mathbf{T_{10}}$};

\node (21) at (-1.3,-4.95){{\small $\mathbf{\tilde{e}_{\tiny 1}}$}};
\node (22) at (0.35,-4.65){{\small $\mathbf{\tilde{e}_{\tiny 2}}$}};
\node (23) at (1.35,-4.95){{\small $\mathbf{\tilde{e}_{\tiny 3}}$}};

\draw[-] (1) -- (2)-- (5)--(6)--(10)--(9)--(4)--(1)--(3)--(7)--(8)--(6);
\draw[-] (10)--(8);

%%%%%%%%%%%%%%%%%%%%%%%%%%%%%%%%%%%%

\node[draw,fill,circle,inner sep=1pt] (1) at (-5,-3){};
\node (11) at (-5,-2.75){{\small $a_{\tiny 1}$}};
\node[draw,fill,circle,inner sep=1pt] (2) at (-3.5,-4){};
\node (12) at (-3.2,-3.95){{\small $a_{\tiny 2}$}};
\node[draw,fill,circle,inner sep=1pt] (7) at (-6.5,-4){};
\node (17) at (-6.8,-3.95){{\small $a_{\tiny 7}$}};
\node[draw,fill,circle,inner sep=1pt] (6) at (-7,-5.5){};
\node (16) at (-7.3,-5.5){{\small $a_{\tiny 6}$}};
\node[draw,fill,circle,inner sep=1pt] (3) at (-3,-5.5){};
\node (13) at (-2.65,-5.5){{\small $a_{\tiny 3}$}};
\node[draw,fill,circle,inner sep=1pt] (4) at (-3.7,-7.3){};
\node (14) at (-3.65,-7.6){{\small $a_{\tiny 4}$}};
\node[draw,fill,circle,inner sep=1pt] (5) at (-6.2,-7.3){};
\node (15) at (-6.15,-7.6){{\small $a_{\tiny 5}$}};
\node[draw,fill,circle,inner sep=1pt] (8) at (-5,-4){};
\node (18) at (-4.7,-3.95){{\small $b_{\tiny 1}$}};
\node[draw,fill,circle,inner sep=1pt] (14) at (-6,-4.8){};
\node (24) at (-6.3,-4.8){{\small $b_{\tiny 7}$}};
\node[draw,fill,circle,inner sep=1pt] (9) at (-4,-4.8){};
\node (19) at (-3.7,-4.8){{\small $b_{\tiny 2}$}};
\node[draw,fill,circle,inner sep=1pt] (15) at (-6.1,-5.8){};
\node (25) at (-6.30,-5.95){{\small $b_{\tiny 6}$}};
\node[draw,fill,circle,inner sep=1pt] (10) at (-3.9,-5.8){};
\node (20) at (-3.65,-5.95){{\small $b_{\tiny 3}$}};
\node[draw,fill,circle,inner sep=1pt] (12) at (-5.5,-6.6){};
\node (22) at (-5.8,-6.55){{\small $b_{\tiny 5}$}};
\node[draw,fill,circle,inner sep=1pt] (11) at (-4.5,-6.6){};
\node (21) at (-4.15,-6.6){{\small $b_{\tiny 4}$}};
\node (0) at (-5,-8.2){$\mathbf{P_{14}}$};

\draw[-] (1) -- (2)-- (3)--(4)--(5)--(6)--(7)--(14)--(9)--(11)--(15)--(8)--(10)--(12)--(14);
\draw[-](7)--(1);
\draw[-] (1) -- (8);
\draw[-] (9) -- (2);
\draw[-] (3) -- (10);
\draw[-] (4) -- (11);
\draw[-] (12) -- (5);
\draw[-] (15) -- (6);

%%%%%%%%%%%%%%%%%%%%%%%%%%%%%%%%%%%%%%%

\node (0) at (5,-8.6){$\mathbf{Q_{13}}$};
\node[draw,fill,circle,inner sep=1pt] (1) at (4,-3){};
\node (11) at (3.7,-3){{\small $a_{\tiny 1}$}};
\node[draw,fill,circle,inner sep=1pt] (2) at (6,-3){};
\node (12) at (6.3,-3){{\small $a_{\tiny 2}$}};
\node[draw,fill,circle,inner sep=1pt] (3) at (4,-4){};
\node (13) at (3.7,-4){{\small $d_{\tiny 1}$}};
\node[draw,fill,circle,inner sep=1pt] (5) at (6,-4){};
\node (15) at (6.3,-4){{\small $d_{\tiny 2}$}};
\node[draw,fill,circle,inner sep=1pt] (4) at (5,-4){};
\node (14) at (5.25,-4.25){{\small $h$}};
\node[draw,fill,circle,inner sep=1pt] (7) at (4,-6){};
\node (17) at (3.75,-6){{\small $g_{\tiny 1}$}};
\node[draw,fill,circle,inner sep=1pt] (9) at (6,-6){};
\node (19) at (6.3,-6){{\small $g_{\tiny 2}$}};
\node[draw,fill,circle,inner sep=1pt] (8) at (5,-6){};
\node (18) at (5.25,-5.75){{\small $h^{\prime}$}};
\node[draw,fill,circle,inner sep=1pt] (10) at (4,-7){};
\node (20) at (3.7,-7){{\small $b_{\tiny 1}$}};
\node[draw,fill,circle,inner sep=1pt] (11) at (6,-7){};
\node (21) at (6.3,-7){{\small $b_{\tiny 2}$}};
\node[draw,fill,circle,inner sep=1pt] (6) at (5,-5){};
\node (26) at (5.25,-5){{\small $v$}};
\node[draw,fill,circle,inner sep=1pt] (12) at (3,-8){};
\node (22) at (3,-8.3){{\small $c_{\tiny 1}$}};
\node[draw,fill,circle,inner sep=1pt] (13) at (7,-8){};
\node (23) at (7,-8.3){{\small $c_{\tiny 2}$}};

\node (24) at (5,-3.5){\textcolor{blue}{\small $C_{1}$}};
\node (25) at (5,-2.75){{\small $\mathbf{\tilde{e}_{\tiny 1}}$}};
\node (26) at (4.5,-7.75){{\small $\mathbf{\tilde{e}_{\tiny 2}}$}};
\node (27) at (5.5,-7.75){{\small $\mathbf{\tilde{e}_{\tiny 3}}$}};

\draw[-] (1) -- (2)-- (5)--(9)--(11)--(12)--(3)--(4)--(6)--(8)--(7)--(10)--(13)--(5)--(4);
\draw[-] (1) -- (3)-- (7);
\draw[-](8)--(9);
%%%%%%%%%%%%%%%%%%%%%%%%%%%%%%%%%%%%%%%%%%
\end{tikzpicture}
{\bf \caption{}\label{specialgraphs}}
\end{figure}

\begin{theorem}\label{wellcovered-characterization1}{\rm \cite[Theorem 1.1]{Finbow2}}
If $G$ is connected without $4$- and $5$-cycles, then $G$ is well-covered if and only if $G\in \{C_7,T_{10}\}$ or $\{V(H) \mid H\in S_G\}$ is a partition of $V(G)$.
\end{theorem}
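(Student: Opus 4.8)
The plan is to prove the two implications separately, the forward one by strong induction on $|V(G)|$.

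\emph{For "$\Leftarrow$".} If $\{V(H)\mid H\in S_G\}$ is a partition of $V(G)$, I would take an arbitrary maximal stable set $S$ and argue $|S\cap V(H)|=1$ for every $H\in S_G$: at most one because $V(H)$ is a clique, and at least one because otherwise, writing $H=G[N_G[v]]$ for a simplicial vertex $v$ of $H$, the set $S\cup\{v\}$ would be stable. Summing over the partition gives $|S|=|S_G|$, so $G$ is well-covered. For $G=C_7$: a stable set of size $2$ dominates at most $2\cdot 3<7$ vertices, so every maximal stable set has size $\ge 3=\beta(C_7)$, hence exactly $3$. For $G=T_{10}$: a finite inspection of the ten vertices shows every maximal stable set has size $4$.

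\emph{For "$\Rightarrow$".} Assume $G$ is connected, has no $4$- or $5$-cycle, is well-covered, and $G\notin\{C_7,T_{10}\}$; the goal is that $\{V(H)\mid H\in S_G\}$ partitions $V(G)$. First I would prove a lemma valid for any well-covered graph: distinct simplexes are vertex-disjoint. If $x\in V(H_1)\cap V(H_2)$ with $H_i=G[N_G[v_i]]$ (one may take $v_1\neq v_2$), then $v_1$ and $v_2$ are nonadjacent, and since each $V(H_i)$ is a clique containing $x$ we get $V(H_1)\cup V(H_2)=N_G[\{v_1,v_2\}]\subseteq N_G[x]$; extending a maximal stable set of $G-N_G[x]$ by $x$, and a maximal stable set of $G-(V(H_1)\cup V(H_2))$ by $\{v_1,v_2\}$, then yields two maximal stable sets of $G$ whose sizes differ by at least one, contradicting well-coveredness. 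With this lemma the simplexes are automatically pairwise disjoint, so it suffices to show every vertex of $G$ lies in some simplex, i.e. is simplicial or adjacent to a simplicial vertex. If $G$ has no simplicial vertex then, since any vertex of degree $\le 1$ is simplicial, $G$ has minimum degree $\ge 2$, and I would show $G\in\{C_7,T_{10}\}$ (a contradiction): when $G$ is triangle-free this is the girth-$\ge 6$ situation, and a cycle-extension argument — or the known classification of well-covered graphs of girth $\ge 5$, with $C_5$ excluded by hypothesis — forces $G=C_7$; when $G$ has a triangle, non-simpliciality of its vertices forces outside neighbours, and the absence of $4$- and $5$-cycles makes the local structure rigid enough that iterating pins $G$ down to $T_{10}$. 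Otherwise $G$ has a simplicial vertex $v$; put $H:=G[N_G[v]]$ and $G':=G-V(H)$. Then $G'$ has no $4$- or $5$-cycle, and $G'$ is well-covered: for any maximal stable set $S'$ of $G'$ the set $S'\cup\{v\}$ is a maximal stable set of $G$ (stable since $N_G(v)\subseteq V(H)$; maximal since every other vertex is adjacent to $v$ or lies in $V(G')$ and is dominated by $S'$), so all maximal stable sets of $G'$ have equal size. Every vertex of $V(H)$ already lies in the simplex $H$ of $G$, and each component $C$ of $G'$ is a smaller connected well-covered graph with no $4$- or $5$-cycle, so by the inductive hypothesis $C\in\{C_7,T_{10}\}$ or every vertex of $C$ lies in a simplex of $C$.

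\emph{The hard part}, which I expect to be the main obstacle, is to finish by showing (i) no component $C$ of $G'$ is $C_7$ or $T_{10}$, and (ii) when every vertex of $C$ lies in a simplex of $C$, every such vertex still lies in a simplex of $G$. Both come down to controlling the edges of $G$ between $V(H)$ and $V(G')$: because $G$ has no $4$- or $5$-cycle, a single vertex of $H$ can be adjacent to only very few and very specifically placed vertices of a given component $C$ (for instance, to at most a consecutive pair inside a $C_7$-component). Using this, together with well-coveredness applied by comparing maximal stable sets built from different representatives inside $C$, inside $H$, and inside the remaining components, I would show that a $C_7$- or $T_{10}$-component forces a maximal stable set of $G$ of the wrong cardinality (giving (i)), and that a cross-edge can never destroy the simpliciality of the relevant vertices of $C$ — or, if it threatens to, the affected vertex is absorbed into another simplex of $G$ — (giving (ii)). Assembling, $\{V(H)\}$ together with the simplexes of the components of $G'$ is a partition of $V(G)$ into simplexes, which by the disjointness lemma is exactly $\{V(H)\mid H\in S_G\}$. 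The genuinely delicate points are this cross-edge case analysis and the rigidity argument forcing $T_{10}$; this is precisely where the hypotheses "no $4$-cycle" and "no $5$-cycle" are used essentially.
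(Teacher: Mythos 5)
This statement is quoted from the literature (\cite[Theorem 1.1]{Finbow2}); the paper offers no proof of its own, so your attempt can only be measured against the original, which is a lengthy case analysis. Your ``$\Leftarrow$'' direction is correct, and your reduction framework for ``$\Rightarrow$'' (distinct simplexes of a well-covered graph are vertex-disjoint; deleting the closed neighbourhood $N_G[v]$ of a simplicial vertex $v$ leaves a well-covered graph with no $4$- or $5$-cycles) is sound and is indeed the standard skeleton for classifications of this kind. One small repair is needed in the disjointness lemma: the two maximal stable sets you extend from $x$ and from $\{v_1,v_2\}$ are built over unrelated base sets, so nothing forces their cardinalities to differ. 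Instead take a single maximal stable set $S$ with $x\in S$, note that $V(H_1)\cup V(H_2)\subseteq N_G[x]$ forces $S\cap (V(H_1)\cup V(H_2))=\{x\}$, and observe that $(S\setminus\{x\})\cup\{v_1,v_2\}$ is a strictly larger stable set, so $S$ is maximal but not maximum.

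The genuine gap is that the entire combinatorial core of the forward direction is announced rather than proved. Three steps carry all of the difficulty and none is carried out: (a) a connected well-covered graph with no $4$- or $5$-cycles and no simplicial vertex must be $C_7$ or $T_{10}$ --- you offer only that ``the local structure is rigid enough that iterating pins $G$ down to $T_{10}$'', and the triangle-free subcase cannot simply quote the girth-$\geqslant 5$ classification, which is the companion theorem one would equally have to prove; (b) no component of $G'=G-N_G[v]$ can be $C_7$ or $T_{10}$; (c) a simplex of a component of $G'$ remains a simplex of $G$ despite possible cross-edges into $N_G[v]$, or its vertices are absorbed into other simplexes of $G$. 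You explicitly flag (b) and (c) as ``the hard part'' and describe only the kind of argument you would attempt. Since these are exactly the points where the hypotheses ``no $C_4$'', ``no $C_5$'' and well-coveredness interact, and the published proof spends essentially all of its length on them, the proposal as it stands is a plausible strategy outline rather than a proof.
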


\begin{remark}\rm\label{rem-nov13}
Suppose $G$ is well-covered. If $G$ is simplicial, or $G$ is chordal or $G$ is a graph without $4$- and $5$-cycles and $G\notin \{ C_7,T_{10}\}$. Then, by Theorems \ref{T1-T2Prisner} and \ref{wellcovered-characterization1}, $\{ V(H)\mid H \in S_G\}$ is a partition of $V(G)$. Therefore, $G$ is an $SCQ$ graph with $C_G=Q_G=\emptyset$. 
\end{remark}

\begin{theorem}\label{wellcovered-characterization2}{\rm \cite[Theorem 2 and Theorem 3]{Finbow}}
If $G$ is a connected graph without $3$- and $4$-cycles, then $G$ is well-covered if and only if $G\in\{K_1,C_7,P_{10},P_{13},P_{14},Q_{13}\}$ or $\{V(H) \mid H\in S_G \cup C_G\}$ is a partition of $V(G)$.
\end{theorem}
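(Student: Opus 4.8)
The plan is to prove the two implications separately; the backward direction will be short and the forward direction will carry all the weight. For sufficiency, I would first check by inspection that $K_1$ and each of $C_7,P_{10},P_{13},P_{14},Q_{13}$ is well--covered — compute $\beta(G)$ and verify every maximal stable set attains it, a finite check since these graphs have at most $14$ vertices. For the partition case, assume $\{V(H)\mid H\in S_G\cup C_G\}$ partitions $V(G)$ and let $S$ be an arbitrary maximal stable set. Since $G$ has no $3$-cycle, $G[N_G[v]]$ has at most two vertices for a simplicial $v$, so every simplex is an isolated vertex or a pendant edge and maximality of $S$ forces $|S\cap V(H)|=1$ for each $H\in S_G$. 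For a basic $5$-cycle $D=(z_1,\ldots,z_5)$ one has $|S\cap V(D)|\leq 2$ because $D$ is an induced $C_5$; and $|S\cap V(D)|\leq 1$ is impossible, for if $S\cap V(D)\subseteq\{z_1\}$ then $z_3,z_4\notin S$ have no neighbour of $S$ inside $D$, so by maximality each has a neighbour of $S$ outside $D$, making $z_3$ and $z_4$ two adjacent vertices of $D$ of degree at least $3$ — against basicness. Hence $|S\cap V(D)|=2$ for every $D\in C_G$, and summing over the blocks gives $|S|=|S_G|+2|C_G|$, independent of $S$, so $G$ is well--covered.

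For necessity, let $G$ be connected and well--covered without $3$- or $4$-cycle. If $G$ also has no $5$-cycle, then $G$ has neither a $4$- nor a $5$-cycle, so Theorem \ref{wellcovered-characterization1} gives that $G\in\{C_7,T_{10}\}$ or that $\{V(H)\mid H\in S_G\}$ partitions $V(G)$; the graph $T_{10}$ contains a triangle and is excluded, and since $C_G=\emptyset$ here, this case is finished. So I would then assume $G$ contains a $5$-cycle — necessarily induced, as a chord would create a $3$- or $4$-cycle — and induct on $|V(G)|$. The two reduction moves are: (i) if $\{u,v\}$ is a pendant edge with $v$ a leaf, then $G-\{u,v\}$ is well--covered of girth $\geq 5$ with $\beta$ dropping by $1$, because every maximal stable set of $G$ contains exactly one of $u,v$, those containing $v$ correspond to maximal stable sets of $G-\{u,v\}$, and those containing $u$ have the form $\{u\}$ together with a maximal stable set of $G-N_G[u]$; and (ii) by the counting in the previous paragraph, once a $5$-cycle has been shown to be basic and disjoint from the other forced blocks, deleting it leaves a well--covered graph of girth $\geq 5$ with $\beta$ dropping by $2$. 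Applying the inductive hypothesis to the components of the reduced graph and reassembling should yield the asserted partition — \emph{unless} reassembly obstructs the basicness of some $5$-cycle block, and those obstructed configurations are exactly what must be shown to force $G$ into $\{P_{10},P_{13},P_{14},Q_{13}\}$.

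The hard part will be the structural analysis that underlies these reductions. Starting from an arbitrary induced $5$-cycle $C$, I would classify the possible external neighbours of its vertices and the ways $C$ can meet a second $5$-cycle or a pendant edge, and in each branch test well--coveredness against a carefully chosen pair of maximal stable sets, aiming to conclude that either some $5$-cycle of $G$ is basic and detachable (so that the induction applies) or the combined rigidity of well--coveredness and the absence of $3$- and $4$-cycles pins $G$ down to one of the four exceptional graphs. This case analysis, together with the verification that those four graphs (and $C_7$) are well--covered, is the substantial content; it is carried out in \cite{Finbow}, and the inductive scaffolding and counting lemmas above are routine by comparison.
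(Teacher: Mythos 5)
This statement carries no proof in the paper: it is imported verbatim from \cite{Finbow} (Theorems 2 and 3 there), so there is no internal argument to compare yours against. Your sufficiency direction is correct and complete: triangle-freeness does force every simplex to be a single vertex or a pendant edge, maximality forces a stable set to meet such a block in exactly one vertex, and your degree argument on $z_3,z_4$ correctly shows that any maximal stable set meets a basic $5$-cycle in exactly two vertices (the case $S\cap V(D)=\emptyset$ is covered by the same argument), giving $|S|=|S_G|+2|C_G|$ for every maximal stable set $S$. Your two reduction lemmas for the converse are also sound as stated: deleting a pendant edge, or deleting a basic $5$-cycle (every maximal stable set of $G-V(C)$ extends by exactly two vertices of $C$ because the vertices of $C$ adjacent to the rest form a stable set of $C$), preserves well-coveredness and drops $\beta$ by $1$ and $2$ respectively.

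However, the converse as a whole is not proved. The entire difficulty of the theorem is the structural case analysis showing that a connected well-covered graph of girth at least $5$ that contains a $5$-cycle either admits one of your detachable blocks in a way compatible with reassembling the partition after deletion (note the deletions can disconnect $G$, and a block of a component need not be a simplex or a basic $5$-cycle of $G$ itself), or is forced into $\{P_{10},P_{13},P_{14},Q_{13}\}$; you explicitly delegate exactly this to \cite{Finbow}. That is the same move the paper makes in citing the result as a black box, so your proposal is consistent with the paper's treatment, but as a standalone proof it has a genuine gap precisely where the theorem is hard.
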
 

\begin{definition}\rm
The {\it complement\/} of $G$, denoted by $\overline{G}$, is the graph with $V(\overline{G})=V(G)$ such that for each pair of different vertices $x$ and $y$ of $D$, we have that $\{ x,y\} \in E(\overline{G})$ if and only if $\{ x,y\} \notin E(G)$.
\end{definition}

\begin{definition}\rm\label{PerfectGraph}
A {\it $k$-colouring of $G$\/} is a function $c:V(G)\rightarrow \{ 1,2,\ldots ,k\}$ such that $c(u)\neq c(v)$ if $\{ u,v\} \notin E(G)$. The smallest integer $k$ such that $G$ has a $k$-colouring is called the {\it chromatic number of $G$\/} and it is denoted by $\chi (G)$. On the other hand, the {\it clique number\/}, denoted by $\omega(G)$ is the size of the largest complete subgraph of $G$. Finally, $G$ is {\it perfect\/} if $\chi (H)=\omega (H)$ for every induced subgraph $H$ of $G$.
\end{definition}

\begin{remark}\rm\label{stableset-char}
Let $A$ be a subset of $V(G)$, then $A$ is a stable set of $G$ if and only if $\overline{G}[A]$ is a complete subgraph of $\overline{G}$. Hence, $\beta(G)=\omega(\overline{G})$.
\end{remark}

\begin{theorem}\label{prop-Perfect}{\rm \cite[Theorem 5.5.3]{Diestel}}
$G$ is perfect if and only if $\overline{G}$ is perfect.
\end{theorem}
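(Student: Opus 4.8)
The plan is to derive the statement from a \emph{self--complementary} characterization of perfection due to Lov\'asz (the Weak Perfect Graph Theorem): a graph $H$ is perfect if and only if $|V(H')|\le\beta(H')\,\omega(H')$ for every induced subgraph $H'$ of $H$, where $\beta$ is the stable number. Granting this criterion, the theorem follows in one line. For any $A\subseteq V(G)$ one has $\overline{G[A]}=\overline{G}[A]$, and by Remark \ref{stableset-char}, $\beta(G[A])=\omega(\overline{G}[A])$, $\omega(G[A])=\beta(\overline{G}[A])$ and $|V(G[A])|=|V(\overline{G}[A])|$; hence $A\mapsto A$ identifies the induced subgraphs of $G$ with those of $\overline{G}$ in a way that swaps the roles of $\beta$ and $\omega$ and preserves the number of vertices. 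Therefore the inequality $|V(\cdot)|\le\beta(\cdot)\,\omega(\cdot)$ holds for all induced subgraphs of $G$ exactly when it holds for all induced subgraphs of $\overline{G}$, so $G$ is perfect iff $\overline{G}$ is perfect. The whole task thus reduces to the criterion.

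First I would dispatch the easy direction of the criterion: if $H$ is perfect then $\chi(H)=\omega(H)$, and an optimal colouring splits $V(H)$ into $\omega(H)$ stable sets, each of size at most $\beta(H)$, so $|V(H)|\le\beta(H)\,\omega(H)$; since induced subgraphs of perfect graphs are perfect, the inequality propagates to all of them. For the converse I would argue by contradiction with a minimally imperfect graph. Assume $G$ satisfies the inequality on all induced subgraphs but is not perfect, and choose an induced subgraph $G'$ that is imperfect while every proper induced subgraph of $G'$ is perfect, so $\chi(G')>\omega(G')$; write $n=|V(G')|$, $\omega=\omega(G')$, $\beta=\beta(G')$. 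A short colouring argument shows $G'$ is ``critical'': for every stable set $S$ of $G'$ there is an $\omega$-clique of $G'$ disjoint from $S$, for otherwise $\omega(G'-S)\le\omega-1$, whence $\chi(G'-S)\le\omega-1$ by perfection of $G'-S$, and colouring $G'-S$ with $\omega-1$ colours and $S$ with one more would $\omega$-colour $G'$. In particular $\omega(G'-v)=\chi(G'-v)=\omega$ for every vertex $v$.

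The heart of the argument, and the step I expect to be the main obstacle, is the combinatorial--linear-algebraic core. Fix a maximum stable set $A_0$ (so $|A_0|=\beta$); for each $u\in A_0$ partition $V(G')\setminus\{u\}$ into $\omega$ stable sets, and list $A_0$ together with all of these classes as stable sets $A_0,A_1,\dots,A_{\beta\omega}$. A direct count shows every vertex of $G'$ lies in exactly $\beta$ of these $\beta\omega+1$ sets. For each index $i$ pick an $\omega$-clique $K_i$ disjoint from $A_i$; then a double count (the $\omega$ vertices of $K_i$ account for $\omega\beta$ set-memberships, all among the $\beta\omega$ sets $A_j$ with $j\neq i$, and a clique meets a stable set at most once) forces $|K_i\cap A_j|=1$ for every $j\neq i$, while $|K_i\cap A_i|=0$. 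Letting $M$ and $N$ be the $(\beta\omega+1)\times n$ vertex-incidence matrices of the families $(A_i)$ and $(K_i)$, this says precisely $MN^{\mathsf T}=J-I$, an invertible $(\beta\omega+1)\times(\beta\omega+1)$ matrix (its eigenvalues are $\beta\omega$ and $-1$). Hence $\beta\omega+1=\operatorname{rank}(MN^{\mathsf T})\le\operatorname{rank}M\le n$, contradicting $n\le\beta\omega$. This completes the criterion and with it the theorem. An alternative route to the criterion is Lov\'asz's replication lemma (duplicating a vertex of a perfect graph preserves perfection), which can be used to produce the same family of stable sets; I would fall back on it if the critical-graph bookkeeping proves awkward to set up cleanly.
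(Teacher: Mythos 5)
Your argument is correct, but note that the paper does not prove this statement at all: it is quoted from the literature (Diestel, Theorem~5.5.3), so you have supplied a complete proof where the authors rely on a citation. What you give is the standard modern treatment: reduce the Weak Perfect Graph Theorem to the self-complementary criterion $|V(H')|\le \beta(H')\,\omega(H')$ for all induced subgraphs $H'$ (which is invariant under complementation because $\overline{G}[A]=\overline{G[A]}$ and Remark~\ref{stableset-char} swaps $\beta$ and $\omega$), and then prove the hard direction of the criterion by Gasparian's linear-algebra argument on a minimally imperfect subgraph. All the delicate points are handled: you justify that $\chi(G'-u)$ is exactly $\omega$ (not merely $\le\omega$), so each of the $\beta$ partitions really contributes $\omega$ stable sets; the membership count (each vertex in exactly $\beta$ of the $\beta\omega+1$ sets) and the double count forcing $|K_i\cap A_j|=1$ for $j\ne i$ are right; and $MN^{\mathsf T}=J-I$ is invertible since $\beta\omega\ge 1$, giving $\beta\omega+1\le\operatorname{rank}M\le n\le\beta\omega$, a contradiction. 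This is essentially the proof given in Diestel for the companion characterization (his Theorem~5.5.4), from which 5.5.3 follows; the alternative replication-lemma route you mention as a fallback is Lov\'asz's original argument and would also work. No gaps.
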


\section{Strong vertex cover and $\star$-semi-forest}
Let $D=(G,\mathcal{O},w)$ be a weighted oriented graph. In this Section, we introduce the unicycle oriented subgraphs (Definition \ref{Unicycle}), the root oriented trees (Definition \ref{ROT}), and the $\star$-semi-forests of $D$ (Definition \ref{semi-forest}). With this definitions, we characterize when a subset of $V(G)$ is contained in a strong vertex cover (see Theorem \ref{theorem-oct29}). Using this result, we characterize when $I(D)$ is unmixed if $G$ is a perfect graph (see Definition \ref{PerfectGraph} and Theorem \ref{Perf-Unm}). 

\begin{proposition}\label{GeneratinAStrongVC}
If $\mathcal{C}$ is a vertex cover of $D$ such that $N_{D}^{+}(A)\subseteq \mathcal{C}$ and $A\subseteq V^{+}$, then there is a strong vertex cover $\mathcal{C}^{\prime}$ of $D$, such that $N_{D}^{+}(A)\subseteq \mathcal{C}^{\prime}\subseteq \mathcal{C}$.
\end{proposition}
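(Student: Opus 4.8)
The plan is to build $\mathcal{C}'$ from $\mathcal{C}$ by repeatedly throwing away vertices that sit in $L_3$ without the required "strong" in-edge, while being careful never to destroy the inclusion $N_D^+(A)\subseteq \mathcal{C}'$. First I would observe, using Remark~\ref{VertexL3}, that a vertex $x$ lies in $L_3(\mathcal{C})$ exactly when $N_D[x]\subseteq\mathcal{C}$; and for such an $x$, by Remark~\ref{Compl-einC}, the set $(\mathcal{C}\setminus x)\cup N_D(x)=\mathcal{C}\setminus x$ is again a vertex cover of $D$ (the union collapses because $N_D(x)\subseteq\mathcal{C}$ already). So removing an $L_3$-vertex always yields a smaller vertex cover.

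The key point is the choice of which $L_3$-vertex to delete. Call a vertex $x\in L_3(\mathcal{C})$ \emph{bad} if there is no edge $(y,x)\in E(D)$ with $y\in (\mathcal{C}\setminus L_1(\mathcal{C}))\cap V^+$; $\mathcal{C}$ is strong precisely when it has no bad vertices. If $\mathcal{C}$ has a bad vertex $x$, I would set $\mathcal{C}_1:=\mathcal{C}\setminus x$, which is still a vertex cover and still contains $N_D^+(A)$ — here I must check $x\notin N_D^+(A)$. Indeed, if $x\in N_D^+(A)$, say $x\in N_D^+(a)$ with $a\in A\subseteq V^+$, then $(a,x)\in E(D)$ and $w(a)>1$; moreover $a\in\mathcal{C}$ (it covers the edge $\{a,x\}$ — note $x\in L_3$ means $N_D[x]\subseteq\mathcal{C}$ so $a\in\mathcal{C}$). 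Now $a\notin L_1(\mathcal{C})$: any out-neighbour of $a$ lies in $N_D^+(A)\subseteq\mathcal{C}$, so $N_D^+(a)\cap\mathcal{C}^c=\emptyset$. Thus $a\in(\mathcal{C}\setminus L_1(\mathcal{C}))\cap V^+$ is exactly the witness making $x$ not bad — contradiction. Hence a bad vertex is never in $N_D^+(A)$, and deletion preserves the hypotheses of the proposition (with the same $A$).

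Iterating, since $V(D)$ is finite and $|\mathcal{C}_i|$ strictly decreases, the process terminates at a vertex cover $\mathcal{C}'$ with $N_D^+(A)\subseteq\mathcal{C}'\subseteq\mathcal{C}$ and no bad vertices, i.e.\ a strong vertex cover. The one subtlety to handle carefully is that deleting a vertex can change the sets $L_1,L_2,L_3$ of the \emph{remaining} cover, so "bad" must be recomputed at each stage and the termination argument must rest only on the cardinality drop, not on any monotonicity of the $L_i$; this bookkeeping is where I expect the only real friction, but it is routine once one fixes the right loop invariant ($\mathcal{C}_i$ is a vertex cover with $N_D^+(A)\subseteq\mathcal{C}_i$). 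Alternatively, one can phrase the whole argument non-inductively: let $\mathcal{C}'$ be a vertex cover minimal with respect to inclusion among all vertex covers $\mathcal{C}''$ satisfying $N_D^+(A)\subseteq\mathcal{C}''\subseteq\mathcal{C}$, and show directly that $\mathcal{C}'$ is strong — if it had a bad vertex $x$, the argument above shows $x\notin N_D^+(A)$ and $\mathcal{C}'\setminus x$ is a strictly smaller admissible cover, contradicting minimality.
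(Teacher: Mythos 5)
Your proof is correct and follows essentially the same strategy as the paper: iteratively delete $L_3$-vertices while preserving the invariant $N_D^+(A)\subseteq\mathcal{C}_i$, using the key observation that any $x\in N_D^+(A)\cap L_3(\mathcal{C}_i)$ automatically has a strong witness $a\in A$ with $a\in(\mathcal{C}_i\setminus L_1(\mathcal{C}_i))\cap V^{+}$ because $N_D^+(a)\subseteq N_D^+(A)\subseteq\mathcal{C}_i$. The only (cosmetic) difference is that the paper deletes all vertices of $L_3(\mathcal{C}_i)\setminus N_D^+(A)$ and verifies strongness at the end, whereas you delete only the ``bad'' ones and stop when none remain.
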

\begin{proof}
First, we prove that there is a vertex cover $\mathcal{C}^{\prime}$ such that $L_3(\mathcal{C}^{\prime})\subseteq N_{D}^{+}(A)\subseteq \mathcal{C}^{\prime}\subseteq \mathcal{C}$. We take $L:=N_{D}^{+}(A)$. If $L_{3}(\mathcal{C})\subseteq L$, then we take $\mathcal{C}^{\prime}=\mathcal{C}$. Now, we suppose there is $a_1\in L_{3}(\mathcal{C})\setminus L$, then by Remark \ref{VertexL3}, $N_{D}[a_1]\subseteq \mathcal{C}$. Thus, $\mathcal{C}_{1}=\mathcal{C}\setminus\{a_1\}$ is a vertex cover and $L\subseteq \mathcal{C}_1$, since $L\subseteq \mathcal{C}$ and $a_1\notin L$. Now, we suppose that there are vertex covers $\mathcal{C}_0,\ldots, \mathcal{C}_k$, such that $L\subseteq \mathcal{C}_i=\mathcal{C}_{i-1}\setminus \{a_i\}$ and $a_i\in L_{3}(\mathcal{C}_{i-1})\setminus L$ for $i=1,\ldots, k$ where $\mathcal{C}_0=\mathcal{C}$ and we give the following recursively process: If $L_{3}(\mathcal{C}_k)\subseteq L$, then we take $\mathcal{C}^{\prime}=\mathcal{C}_k$. Now, if there is $a_{k+1}\in L_{3}(\mathcal{C}_k)\setminus L$, then by Remark \ref{VertexL3}, $N_D[a_{k+1}]\subseteq \mathcal{C}_k$. Consequently, $\mathcal{C}_{k+1}:=\mathcal{C}_{k}\setminus \{a_{k+1}\}$ is a vertex cover. Also, $L\subseteq \mathcal{C}_{k+1}$, since $L\subseteq \mathcal{C}_k$ and $a_{k+1}\not\in L$. This process is finite, since $|V(D)|$ is finite. Hence, there is $m$ such that $L_3(\mathcal{C}_m)\subseteq L\subseteq \mathcal{C}_m\subseteq \mathcal{C}$. Therefore, we take $\mathcal{C}^{\prime}=\mathcal{C}_m$.  

\noindent
Now, we prove that $\mathcal{C}^{\prime}$ is strong. We take $x\in L_{3}(\mathcal{C}^{\prime})$, then $x\in L=N_{D}^{+}(A)$, since $L_3(\mathcal{C}^{\prime})\subseteq L$. Thus, $(y,x)\in E(D)$ for some $y\in A\subseteq V^{+}$. Hence, $y\in \mathcal{C}^{\prime}$, since $x\in L_{3}(\mathcal{C}^{\prime})$. Also, $y\not\in L_{1}(\mathcal{C}^{\prime})$, since $N_{D}^{+}(y)\subseteq N_{D}^{+}(A)\subseteq \mathcal{C}^{\prime}$. Hence, $y\in \big( \mathcal{C}^{\prime}\setminus L_1(\mathcal{C}^{\prime})\big) \cap V^{+}$. Therefore, $\mathcal{C}^{\prime}$ is strong. \qed
\end{proof}

\begin{definition}\rm\label{Unicycle}
If $B$ is a weighted oriented subgraph of $D$ with exactly one cycle $C$, then $B$ is called {\it unicycle oriented graph\/} when $B$ satisfies the following conditions:
\begin{enumerate}[noitemsep]
\item[$(i)$] $C$ is an oriented cycle in $B$ and there is an oriented path from $C$ to $y$ in $B$, for each $y\in V(B)\setminus V(C)$.
\item[$(ii)$] If $x\in V(B)$ with $w(x)=1$, then $deg_B(x)=1$. 
\end{enumerate}
\end{definition}

\begin{definition}\rm\label{ROT}
A weighted oriented subgraph $T$ of $D$ without cycles, is a {\it root oriented tree\/} ({\it ROT\/}) with {\it parent\/} $v\in V(T)$ when $T$ satisfies the following properties:
\begin{enumerate}[noitemsep]
\item[$(i)$] If $x\in V(T)\setminus \{ v \}$, there is an oriented path $\mathcal{P}$ in $T$ from $v$ to $x$.
\item[$(ii)$] If $x\in V(T)$ with $w(x)=1$, then $deg_T(x)=1$ and $x\neq v$ or $V(T)=\{ v\}$ and $x=v$. 
\end{enumerate} 
\end{definition} 

\begin{definition}\rm\label{semi-forest} 
A weighted oriented subgraph $H$ of $D$ is a {\it $\star$-semi-forest\/} if there are root oriented trees $T_1, \ldots , T_r$ whose parents are $v_1, \ldots , v_r$ and unicycle oriented subgraphs $B_1, \ldots , B_s$ such that $H=\big( \cup_{i=1}^{r} \ T_i \big) \cup \big( \cup _{j=1}^{s} \ B_j \big)$ with the following conditions:
\begin{enumerate}[noitemsep]
\item[$(i)$] $V(T_1), \ldots , V(T_r), V(B_1), \ldots , V(B_s)$ is a partition of $V(H)$. 
\item[$(ii)$] There is $W=\{ w_1, \ldots , w_r \} \subseteq V(D)\setminus V(H)$ such that $w_i \in N_{D}(v_i)$ for $i=1, \ldots , r$ (it is possible that $w_i=w_j$ for some $1\leqslant i<j\leqslant r$).  
\item[$(iii)$] There is a partition $W_1$, $W_2$ of $W$ such that $W_1$ is a stable set of $D$, $W_2\subseteq V^{+}$ and $(w_i , v_i)\in E(D)$ if $w_i \in W_2$. Also, $N_{D}^{+}( W_2\cup \tilde{H})\cap W_1=\emptyset$, where 
$$
\tilde{H}=\{ x\in V(H)\mid deg_H(x)\geqslant 2 \} \cup \{ v_i \mid deg_H(v_i)=1\}.
$$
\end{enumerate}
\end{definition}

\begin{remark}\rm\label{SubsetH}
By Definition \ref{Unicycle} and Definition \ref{ROT}, we have $\tilde{H} \subseteq V^{+}$. Furthermore, if $v_i$ is a parent vertex of $T_i$, with $deg_H(v_i)\geqslant 1$, then $v_i\in \tilde{H}$.
\end{remark}

\begin{lemma}\label{lemma-oct16}
If $H$ is a $\star$-semi-forest of $D$, then
\begin{center}
$V(H)\subseteq N_{D}(W_1)\cup N_{D}^{+}(W_2\cup \tilde{H})$.
\end{center}
\end{lemma}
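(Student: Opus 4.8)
The plan is to fix an arbitrary $x\in V(H)$ and argue by cases according to the partition $V(H)=V(T_1)\sqcup\cdots\sqcup V(T_r)\sqcup V(B_1)\sqcup\cdots\sqcup V(B_s)$ from condition $(i)$ of Definition \ref{semi-forest}. The recurring mechanism I would use is: if I can find an edge $(y,x)\in E(D)$ with $y\in V(H)$ and either $\deg_H(y)\geqslant 2$, or $y=v_i$ a parent with $\deg_H(v_i)\geqslant 1$, then $y\in\tilde{H}$ by the definition of $\tilde{H}$ (cf. Remark \ref{SubsetH}), hence $x\in N_D^{+}(y)\subseteq N_D^{+}(W_2\cup\tilde{H})$. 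A small but important point is that, because the vertex sets of the $T_i$ and $B_j$ are pairwise disjoint, every edge of $H$ incident to a vertex $y$ lies in the unique piece containing $y$; so $\deg_H(y)$ equals its degree inside that single $T_i$ or $B_j$, and can be read off directly from Definitions \ref{Unicycle} and \ref{ROT}.

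First I would dispose of the parent vertices: if $x=v_i$, then by $(ii)$ of Definition \ref{semi-forest} there is $w_i\in N_D(v_i)\cap W$; if $w_i\in W_1$ then $v_i\in N_D(w_i)\subseteq N_D(W_1)$, and if $w_i\in W_2$ then $(w_i,v_i)\in E(D)$ by $(iii)$, so $v_i\in N_D^{+}(w_i)\subseteq N_D^{+}(W_2)$. Either way $x$ is in the required set. Next, for $x\in V(T_i)\setminus\{v_i\}$, I would invoke $(i)$ of Definition \ref{ROT} to get an oriented path in $T_i$ from $v_i$ to $x$ and let $(y,x)$ be its last edge: if $y=v_i$ then $\deg_H(v_i)\geqslant 1$ because of this edge, so $v_i\in\tilde{H}$; if $y\neq v_i$ then $y$ is interior to the path, hence carries an incoming edge together with the outgoing edge $(y,x)$, so $\deg_H(y)\geqslant 2$ and again $y\in\tilde{H}$. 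In both subcases $x\in N_D^{+}(\tilde{H})$.

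Finally, for $x\in V(B_j)$ with $C$ the unique cycle of $B_j$: if $x\in V(C)$, then since $C$ is an oriented cycle there is $(y,x)\in E(C)\subseteq E(D)$ with $y\in V(C)$, and $y$ has two cycle-edges so $\deg_H(y)\geqslant 2$; if $x\in V(B_j)\setminus V(C)$, I would use $(i)$ of Definition \ref{Unicycle} to take an oriented path in $B_j$ from $C$ to $x$ and let $(y,x)$ be its last edge, where $y$ either lies on $C$ or is interior to the path, so $\deg_H(y)\geqslant 2$ either way. Hence $y\in\tilde{H}$ and $x\in N_D^{+}(\tilde{H})$. This exhausts the cases and gives $V(H)\subseteq N_D(W_1)\cup N_D^{+}(W_2\cup\tilde{H})$.

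I do not expect a serious obstacle here: the proof is essentially bookkeeping over the structure of $\star$-semi-forests. The only place that needs care is verifying in each case that the predecessor vertex $y$ genuinely satisfies the membership criterion for $\tilde{H}$ — that is, that its $H$-degree is at least $2$ (or that it is a parent of positive $H$-degree) — which is where the vertex-partition hypothesis $(i)$ of Definition \ref{semi-forest}, the degree condition $(ii)$ in Definition \ref{ROT}, and the condition $(ii)$ in Definition \ref{Unicycle} all get used.
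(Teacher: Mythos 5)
Your proposal is correct and follows essentially the same route as the paper's proof: fix $x\in V(H)$, split according to whether $x$ lies in some $B_j$ or some $T_i$, handle the parents $v_i$ via the set $W$ and conditions $(ii)$--$(iii)$ of Definition \ref{semi-forest}, and handle all other vertices by taking the last edge $(y,x)$ of an oriented path (or cycle edge) and checking $y\in\tilde{H}$. Your explicit remark that $\deg_H$ restricts correctly to each piece because of the vertex partition is a point the paper leaves implicit, but the argument is the same.
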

\begin{proof}
We take $x\in V(H)$. Since $H=\big( \cup _{i=1}^{r} \ T_i \big) \cup \big( \cup _{j=1}^{s} \ B_j \big)$, we have two cases: \smallskip

\noindent
Case 1) $x\in V(B_j)$ for some $1\leqslant j\leqslant s$. Let $C$ be the oriented cycle of $B_j$. If $x\in V(C)$, then there is $y_1 \in V(C)$ such that $(y_1 , x)\in E(C)$. Furthermore, $deg_H (y_1)\geqslant deg_C (y_1)=2$, then $y_1 \in \tilde{H}$. Hence, $x\in N_{D}^{+}(y_1)\subseteq N_{D}^{+}(\tilde{H})$. Now, if $x\in V(B_j)\setminus V(C)$, then there is an oriented path $\mathcal{P}$ in $B_j$ from $C$ to $x$. Thus, there is $y_2 \in V(\mathcal{P})$ such that $(y_2, x)\in E(\mathcal{P})$. If $|V(\mathcal{P})|>2$, then $deg_H(y_2) \geqslant deg_{\mathcal{P}}(y_2)=2$. If $|V(\mathcal{P})|=2$, then $y_2 \in V(C)$ and $deg_H (y_2)>deg_C (y_2)=2$. Therefore, $y_2 \in \tilde{H}$ and $x\in N_{D}^{+}(\tilde{H})$. \smallskip

\noindent
Case 2) $x\in V(T_i)$ for some $1\leqslant i\leqslant r$. First, assume $x=v_i$, then there is $w_i \in W$ such that $x\in N_D(w_i)$. Consequently, $x\in N_D(W_1)$ if $w_i\in W_1$ and, by $(iii)$ of Definition \ref{semi-forest}, $x\in N_{D}^{+}(w_i)\subseteq N_{D}^{+}(W_2)$ if $w_i\in W_2$. Now, we suppose $x\neq v_i$, then there is an oriented path $\mathcal{L}$, from $v_i$ to $x$. Consequently, there is $y_3\in V(\mathcal{L})$ such that $(y_3, x)\in E(D)$. If $y_3\neq v_i$, then $deg_H(y_3)\geqslant deg_{\mathcal{L}}(y_3)=2$. Thus, $y_3\in \tilde{H}$ and $x\in N_{D}^{+}(\tilde{H})$. Finally, if $y_3=v_i$, then $deg_H(y_3)\geqslant 1$. Hence, by Remark  \ref{SubsetH}, $y_3\in \tilde{H}$ and $x\in N_{D}^{+}(\tilde{H})$. \qed 
\end{proof}

\begin{remark}\rm\label{Relat-W-H}
Sometimes to stress the relation between $W$ and $H$ in Definition \ref{semi-forest}, $W$ is denoted by $W^{H}$. Similarly, $W_1^{H}$ and $W_2^{H}$. If $\{ T_1,\ldots ,T_r\}=\emptyset$, then $W^{H}=W_1^{H}=W_2^{H}=\emptyset$.
\end{remark}

\begin{lemma}\label{lemma-dic3}
Let $K$ be a weighted oriented subgraph of $D$. If $H$ is a maximal ROT in $K$ with parent $v$, or $H$ is a maximal unicycle oriented subgraph in $K$ whose cycle is $C$, then there is no $(y,x)\in E(K)$ with $x\in V(K)\setminus V(H)$ and $y\in V^{+}\cap V(H)$.
\end{lemma}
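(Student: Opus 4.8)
\textit{Proof proposal.} The plan is to argue by contradiction: assume there is an edge $(y,x)\in E(K)$ with $x\in V(K)\setminus V(H)$ and $y\in V^{+}\cap V(H)$, and enlarge $H$ by this edge so as to contradict its maximality. Concretely, let $H^{\prime}$ be the weighted oriented subgraph of $K$ with $V(H^{\prime})=V(H)\cup\{x\}$ and $E(H^{\prime})=E(H)\cup\{(y,x)\}$, with weights inherited from $D$. Since $(y,x)\in E(K)$ and $K$ is a weighted oriented subgraph of $D$, $H^{\prime}$ is a weighted oriented subgraph of $K$, and $H$ is properly contained in $H^{\prime}$. The goal is then to verify that $H^{\prime}$ is again a ROT with parent $v$ (in the first case) or a unicycle oriented subgraph with cycle $C$ (in the second case), which is the desired contradiction.

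First I would treat the ROT case. As $x\notin V(H)$, the vertex $x$ satisfies $deg_{H^{\prime}}(x)=1$, so adjoining it creates no new cycle and $H^{\prime}$ is still acyclic. For condition $(i)$ of Definition \ref{ROT}: every $z\in V(H)\setminus\{v\}$ keeps its oriented path from $v$, and for $x$ we take an oriented path from $v$ to $y$ in $H$ (the one-vertex path if $y=v$, and the path guaranteed by $(i)$ for $H$ if $y\neq v$) and append the edge $(y,x)$. For condition $(ii)$: the only vertex whose degree changes from $H$ to $H^{\prime}$ is $y$, with $deg_{H^{\prime}}(y)=deg_{H}(y)+1$; but $y\in V^{+}$ means $w(y)>1$, so $(ii)$ is vacuous at $y$. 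For $x$ we have $deg_{H^{\prime}}(x)=1$ and $x\neq v$ (since $v\in V(H)$), so $(ii)$ holds at $x$ regardless of $w(x)$; every other vertex inherits $(ii)$ verbatim from $H$. Hence $H^{\prime}$ is a ROT with parent $v$ strictly larger than $H$, contradicting maximality.

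The unicycle case is entirely parallel. Again $deg_{H^{\prime}}(x)=1$, so no new cycle is created and $C$ remains the unique cycle of $H^{\prime}$. For condition $(i)$ of Definition \ref{Unicycle}: $C$ is still an oriented cycle of $H^{\prime}$, the vertices of $V(H)\setminus V(C)$ keep their oriented paths from $C$, and for $x$ we append $(y,x)$ to an oriented path from $C$ to $y$ (which is the single edge $(y,x)$ when $y\in V(C)$). For condition $(ii)$: as before, the only degree that grows is that of $y$, and since $w(y)>1$ there is nothing to check at $y$; the vertex $x$ has degree $1$; all other vertices inherit $(ii)$. Thus $H^{\prime}$ is a unicycle oriented subgraph with cycle $C$ properly containing $H$, again contradicting maximality.

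I do not expect any real obstacle: the argument is a single-step extension of $H$, and the only point requiring care is condition $(ii)$ of Definitions \ref{Unicycle} and \ref{ROT}, which forbids a weight-one vertex from having degree exceeding $1$. The hypothesis $y\in V^{+}$ is exactly what lets us raise $deg(y)$ by one without violating $(ii)$, which is why the statement must be phrased in terms of $V^{+}$.
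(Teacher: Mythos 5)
Your proposal is correct and follows essentially the same route as the paper: argue by contradiction, adjoin the edge $(y,x)$ to form a strictly larger subgraph of $K$, and check that conditions $(i)$ and $(ii)$ of Definitions \ref{Unicycle} and \ref{ROT} persist, using $y\in V^{+}$ to dispose of condition $(ii)$ at $y$ (which also covers the degenerate single-vertex ROT case that the paper treats explicitly). No gaps.
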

\begin{proof}
By contradiction suppose there is $(y,x)\in E(K)$ with $x\in V(K)\setminus V(H)$ and $y\in V^{+}\cap V(H)$. Thus, $H\subsetneq H_1:=H\cup \{ (y,x)\} \subseteq K$. If $H$ is a unicycle oriented subgraph with cycle $C$ (resp. $H$ is a ROT), then there is an oriented path $\mathcal{P}$ from $C$ (resp. from $v$) to $y$. Consequently, $\mathcal{P}\cup \{ (y,x)\}$ is an oriented path from $C$ (resp. from $v$) to $x$ in $H_1$. Furthermore, $H_1$ has exactly one cycle (resp. has no cycles), since $deg_{H_1}(x)=1$ and $V(H_1)\setminus V(H)=\{ x\}$. 

\noindent
Now, we take $z\in V(H_1)$ with $w(z)=1$, then $z=x$ or $z\in V(H)$. We prove $deg_{H_1}(z)=1$. If $z=x$, then $deg_{H_1}(x)=1$. Now, if $z\in V(H)$, then $z\neq y$, since $y\in V^{+}$. So, $deg_{H_1}(z)=deg_H(z)$, since $N_{H_1}(x)=\{ y\}$. If $H$ is a ROT with $V(H)=\{ v\}$, then $y=z=v$. A contradiction, since $w(z)=1$ and $y\in V^{+}$. Consequently, by $(ii)$ in Definitions \ref{Unicycle} and \ref{ROT}, $deg_{H_1}(z)=deg_H(z)=1$. Hence, $H_1$ is a unicycle oriented subgraph with cycle $C$ (resp. is a ROT with parent $v$) of $K$. This is a contradiction, since $H\subsetneq H_1\subseteq K$ and $H$ is maximal. \qed
\end{proof}

\begin{definition}\rm
Let $K$ be a weighted oriented subgraph of $D$ and $H$ a $\star$-semi-forest of $D$. We say $H$ is a {\it generating $\star$-semi-forest\/} of $K$ if $V(K)=V(H)$.
\end{definition}

\begin{theorem}\label{theorem-oct29}
Let $K$ be an induced weighted oriented subgraph of $D$. Hence, the fo\-llo\-wing conditions are equivalent:
\begin{enumerate}[noitemsep]
\item[{\rm (1)}] There is a strong vertex cover $\mathcal{C}$ of $D$, such that $V(K) \subseteq\mathcal{C}$.
\item[{\rm (2)}] There is a generating $\star$-semi-forest $H$ of $K$. 
\end{enumerate}
\end{theorem}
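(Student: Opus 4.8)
The plan is to prove the two implications separately, building in each direction the structure on the other side from the combinatorial data of the given object.

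For $(2)\Rightarrow(1)$, suppose $H$ is a generating $\star$-semi-forest of $K$, with root oriented trees $T_1,\dots,T_r$ (parents $v_1,\dots,v_r$), unicycle oriented subgraphs $B_1,\dots,B_s$, and the associated sets $W=W_1\cup W_2$ and $\tilde H$ as in Definition \ref{semi-forest}. The idea is to produce a vertex cover of $D$ containing $V(K)=V(H)$ whose $L_3$ is forced to lie inside the out-neighbourhood of the ``heavy'' set $A:=W_2\cup\tilde H\subseteq V^{+}$, and then invoke Proposition \ref{GeneratinAStrongVC}. Concretely, start from the vertex cover $\mathcal{C}_0:=V(D)\setminus W_1$; this is a vertex cover because $W_1$ is a stable set of $D$. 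Then enlarge/adjust it to a vertex cover $\mathcal C$ containing $V(H)$ and with $W_1\cap \mathcal C=\emptyset$ — note $V(H)\cap W_1=\emptyset$ since $W\subseteq V(D)\setminus V(H)$, so $V(H)\subseteq \mathcal C_0$ already. By Lemma \ref{lemma-oct16}, $V(H)\subseteq N_D(W_1)\cup N_D^{+}(A)$. The crucial point is that condition $(iii)$, $N_D^{+}(W_2\cup\tilde H)\cap W_1=\emptyset$, guarantees $N_D^{+}(A)\subseteq \mathcal C_0$, so $A\subseteq V^{+}$ and $N_D^{+}(A)\subseteq\mathcal C_0=:\mathcal C$ satisfy the hypotheses of Proposition \ref{GeneratinAStrongVC}; that proposition yields a strong vertex cover $\mathcal C'$ with $N_D^{+}(A)\subseteq\mathcal C'\subseteq\mathcal C$. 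It remains to check $V(H)\subseteq\mathcal C'$. This is where one must work a little: the shrinking process in Proposition \ref{GeneratinAStrongVC} only removes vertices of $L_3$ lying outside $N_D^{+}(A)$, so one must argue that every vertex of $V(H)$ is protected — either it lies in $N_D^{+}(A)$, or whenever it sits in $L_3$ of an intermediate cover it is retained. Using Lemma \ref{lemma-oct16} again together with the fact that any $x\in V(H)\cap N_D(W_1)\setminus N_D^{+}(A)$ must be a parent $v_i$ with $\deg_H(v_i)=0$ (an isolated vertex of $H$ whose weight is $1$), one shows such a vertex is never in $L_3$ of any cover containing $W_1^c$ because its neighbour $w_i\notin\mathcal C'$. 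Hence $V(H)\subseteq\mathcal C'$, proving $(1)$.

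For $(1)\Rightarrow(2)$, let $\mathcal C$ be a strong vertex cover of $D$ with $V(K)\subseteq\mathcal C$. I would construct $H$ on the vertex set $V(K)$ greedily. First split $V(K)$ according to the $L_i(\mathcal C)$: for each $x\in V(K)\cap L_3(\mathcal C)$ strongness gives $(y,x)\in E(D)$ with $y\in (\mathcal C\setminus L_1(\mathcal C))\cap V^{+}$, and for each $x\in V(K)\cap L_2(\mathcal C)$ there is $(y,x)\in E(D)$ with $y\in\mathcal C^c$. Consider the directed graph on $V(K)$ whose arcs are all such chosen arcs $(y,x)$ with both endpoints in $V(K)$, plus, for vertices not yet assigned a parent arc, we will use vertices of $\mathcal C^c$ as external parents. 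The key structural facts to extract are: every $x\in V(K)$ of weight $1$ in $V(K)\setminus L_1(\mathcal C)$ lies in $L_2(\mathcal C)\cup L_3(\mathcal C)$, and if it is in $L_3$ then its forced predecessor $y$ has weight $>1$; vertices of weight $1$ therefore become leaves. Decompose the induced structure into its (weak) connected pieces; each piece either contains a directed cycle — realize it as a unicycle oriented subgraph $B_j$ (all weight-$1$ vertices in it are leaves, exactly as required by Definition \ref{Unicycle}$(ii)$) — or is acyclic, in which case pick a ``source'' vertex $v$ inside the piece reachable-from within the piece, make it the parent of a ROT $T_i$, and take as external witness $w_i\in\mathcal C^c$ a neighbour of $v$ (which exists: if $v\notin L_3(\mathcal C)$ then $v\in L_1\cup L_2$ has a neighbour in $\mathcal C^c$; if $v\in L_3(\mathcal C)$, handle it separately by absorbing it via its heavy predecessor so that it is not a parent). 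Finally set $W_2$ to be those $w_i$ arising from $L_3$-type witnesses (with $(w_i,v_i)\in E(D)$, $w_i\in V^{+}$) and $W_1$ the rest; $W_1$ is a stable set because its elements lie in $\mathcal C^c=V(D)\setminus\mathcal C$, which is stable by Remark \ref{tau-beta}. The condition $N_D^{+}(W_2\cup\tilde H)\cap W_1=\emptyset$ then follows since $W_2\subseteq\mathcal C\setminus L_1(\mathcal C)$ means $N_D^{+}(W_2)\subseteq\mathcal C$, and $\tilde H\subseteq V(K)\setminus L_1(\mathcal C)$ (by construction the degree-$\geqslant 2$ vertices and the degree-$1$ parents all carry heavy weight and have all out-neighbours inside $\mathcal C$), so $N_D^{+}(\tilde H)\subseteq\mathcal C$ is disjoint from $W_1\subseteq\mathcal C^c$.

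The main obstacle I anticipate is the bookkeeping around vertices of $V(K)\cap L_3(\mathcal C)$ in the $(1)\Rightarrow(2)$ direction: such a vertex $x$ has $N_D[x]\subseteq\mathcal C$, so it has no neighbour available in $\mathcal C^c$ to serve as an external witness, which means it cannot be a parent of a ROT and cannot be a degree-$1$ leaf attached to the outside; it must be swallowed into a unicycle component or placed at depth $\geqslant 2$ inside a ROT or a unicycle subgraph, using the guaranteed heavy predecessor $y$ from strongness. Orchestrating these local choices consistently so that the pieces partition $V(K)$, so that the orientation conditions $(i)$ in Definitions \ref{Unicycle} and \ref{ROT} hold (every vertex reachable by a directed path from the cycle, resp. from the parent), and so that Definition \ref{semi-forest}$(iii)$ holds globally, is the delicate part. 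I expect Lemma \ref{lemma-dic3} to be the right tool for the maximality arguments that force the leaves to be exactly the weight-$1$ vertices and force the witnesses to exist, and the rest to be careful but routine casework.
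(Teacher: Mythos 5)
Your direction $(2)\Rightarrow(1)$ is essentially the paper's argument and is correct, but you make the final step harder than it is. Once Proposition \ref{GeneratinAStrongVC} (applied with $A=W_2\cup\tilde H\subseteq V^{+}$ to a vertex cover disjoint from $W_1$) returns a strong cover $\mathcal{C}'$ with $N_D^{+}(A)\subseteq\mathcal{C}'\subseteq V(D)\setminus W_1$, the fact that $\mathcal{C}'$ is a vertex cover with $\mathcal{C}'\cap W_1=\emptyset$ already forces $N_D(W_1)\subseteq\mathcal{C}'$, and Lemma \ref{lemma-oct16} gives $V(H)\subseteq N_D(W_1)\cup N_D^{+}(A)\subseteq\mathcal{C}'$; there is no need to track which vertices are ``protected'' during the shrinking process. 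Your side claim that a vertex of $V(H)\cap N_D(W_1)\setminus N_D^{+}(A)$ must be an isolated parent of weight $1$ is neither needed nor true (a parent $v_i$ with $\deg_H(v_i)=1$ and $w_i\in W_1$ need not lie in $N_D^{+}(A)$), but this does not affect the conclusion.

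The genuine gap is in $(1)\Rightarrow(2)$. You name the right ingredients and even the right lemma, but the construction is not carried out, and the step you defer is the content of the proof. Taking ``all such chosen arcs'' at once and splitting into weak components does not produce objects satisfying Definitions \ref{Unicycle} and \ref{ROT}: a weak component of your arc set need not have every vertex reachable by an \emph{oriented} path from a single parent or from a unique cycle (two $L_3$-vertices sharing a common heavy predecessor already break this), and nothing guarantees a component contains at most one cycle. The paper instead peels off pieces one at a time and \emph{maximally}: first every vertex of $L_1(\mathcal{C})\cap V(K)$ as a singleton ROT with a $W_1$-witness in $\mathcal{C}^c$; then, on the residue $K^{l+1}$, either a maximal ROT rooted at a vertex of $L_2(\mathcal{C})\cap V(K^{l+1})$, or, when $V(K^{l+1})\subseteq L_3(\mathcal{C})$, a maximal backward chain $(x_r,\ldots,x_1,x)$ of heavy predecessors supplied by strongness, whose maximality forces either $x_r\notin V(K)$ (root a maximal ROT at $x_{r-1}$ with $W_2$-witness $x_r$) or the chain to close into an oriented cycle of heavy vertices (take a maximal unicycle subgraph). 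Lemma \ref{lemma-dic3} is used precisely to rule out the remaining possibility that $x_r$ lands in a previously built maximal piece $M_j$ --- not, as you suggest, to ``force the leaves to be exactly the weight-$1$ vertices.'' It is this peeling-by-maximal-pieces argument that delivers the partition of $V(K)$, the oriented reachability in each piece, and condition $(iii)$ of Definition \ref{semi-forest} (via $\tilde H\cap L_1(\mathcal{C})=\emptyset$, which holds because the $L_1$-vertices of $K$ were isolated at the start); without it your $(1)\Rightarrow(2)$ remains an outline rather than a proof.
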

\begin{proof}
${\rm (2)} \Rightarrow {\rm (1)}$
Let $\mathcal{C}_1$ be a minimal vertex cover of $D$. By {\rm (2)}, $K$ has a generating $\star$-semi-forest $H$. Now, using the notation of Definition \ref{semi-forest}, we take $\mathcal{C}_2=\big( \mathcal{C}_1\setminus W_1\big) \cup N_{D}(W_1)\cup N_{D}^{+}(W_2\cup \tilde{H})$. By Remark \ref{Compl-einC}, $\mathcal{C}_2$ is a vertex cover of $D$. Since $W_1$ is a stable set, $N_D(W_1)\cap W_1=\emptyset$. Then, $\mathcal{C}_2 \cap W_1=\emptyset$, since $N_{D}^{+}(W_2\cup \tilde{H})\cap W_1=\emptyset$. By Remark \ref{SubsetH} and $(iii)$ in Definition \ref{semi-forest}, $\tilde{H}\cup W_2\subseteq V^{+}$. So, by Proposition \ref{GeneratinAStrongVC}, there is a strong vertex cover $\mathcal{C}$ of $D$ such that $N_{D}^{+}(W_2\cup \tilde{H})\subseteq \mathcal{C} \subseteq \mathcal{C}_2$. Consequently, $\mathcal{C} \cap W_1=\emptyset$, since $\mathcal{C}_2 \cap W_1=\emptyset$. Thus, $N_D(W_1)\subseteq \mathcal{C}$, since $\mathcal{C}$ is a vertex cover. Then, by Lemma \ref{lemma-oct16}, $V(H)\subseteq N_D(W_1)\cup N_{D}^{+}(W_2\cup \tilde{H})\subseteq \mathcal{C}$. Hence, $V(K)\subseteq \mathcal{C}$, since $H$ is a generating $\star$-semi-forest  of $K$. \medskip

\noindent
${\rm (1)} \Rightarrow {\rm (2)}$
We have, $\mathcal{C}$ is a strong vertex cover such that $V(K)\subseteq \mathcal{C}$. If $A:=L_1(\mathcal{C})\cap V(K)=\{ v_1,\ldots , v_s\}$, then there is $w_i\in V(D)\setminus \mathcal{C}\subseteq V(D)\setminus V(K)$ such that $(v_i, w_i)\in E(D)$. We take the ROT's $M_1=\{ v_1\} ,\ldots ,M_s=\{ v_s\}$ and sets $W_{1}^{i}=\{ w_i\}$ and $W_{2}^{i}=\emptyset$ for $i=1, \ldots , s$. 

\noindent
Now, we will give a recursive process to obtain a generating $\star$-semi-forest of $K$. For this purpose, suppose we have connected $\star$-semi-forests $M_{s+1},\ldots ,M_l$ of $K\setminus A$ with subsets $W_{1}^{s+1},\ldots ,W_{1}^l, W_{2}^{s+1},\ldots ,W_{2}^l\subseteq V(D)\setminus V(K)$ and $V^{s+1},\ldots ,V^{l}\subseteq V(K)$ such that for each $s<j\leqslant l$, they satisfies the following conditions:
\begin{itemize}[noitemsep]
\item[{\rm (a)}] $V^{j}=\{ v_j\}$ if $M_j$ is a ROT with parent $v_j$ or $V^{j}$ is the cycle of $M_j$ if $M_j$ is a unicycle oriented subgraph,
\item[{\rm (b)}] $M_j$ is a maximal ROT in $K^{j}:=K\setminus \cup_{i=1}^{j-1} V(M_i)$ with parent in $V^{j}$ or $M_j$ is a maximal unicycle oriented subgraph in $K^{j}$ with cycle $V^{j}$.
\item[{\rm (c)}] $W_{1}^{j}\cap \mathcal{C}=\emptyset$ and $W_{2}^{j}\subseteq \big( \mathcal{C}\setminus (L_1(\mathcal{C})\cup V(K))\big)\cap V^{+}$.
\end{itemize}
Hence, we take $K^{l+1}:=K\setminus \big( \cup_{i=1}^{l} V(M_i)\big)$. This process starts with $l=s$; in this case, $K^{s+1}:=K\setminus \big( \cup_{i=1}^{s} V(M_i)\big) =K\setminus A$; furthermore, if $A=\emptyset$, then $K^{1}=K$. Continuing with the recursive process, if $K^{l+1}=\emptyset$, then $V(K)=\cup_{i=1}^{l} V(M_i)$ and we stop the process. Now, if $K^{l+1}\neq \emptyset$, then we will construct a connected $\star$-semi-forest $M_{l+1}$ of $K^{l+1}$ in the following way: \smallskip

\noindent
{\bf Case (1)} $L_2(\mathcal{C})\cap V(K^{l+1})\neq \emptyset$. Then, there is $z\in L_2(\mathcal{C})\cap V(K^{l+1})$. Thus, there is $(z^{\prime},z)\in E(D)$ with $z^{\prime}\notin \mathcal{C}$. We take a maximal ROT $M_{l+1}$ in $K^{l+1}$, whose parent is $z$. Also, we take $V^{l+1}=\{ v_{l+1}\} =\{ z\}$, $W_{1}^{l+1}=\{ w_{l+1}\} =\{ z^{\prime}\}$ and $W_{2}^{l+1}=\emptyset$. Hence, $M_{l+1}$ satisfies {\rm (a), (b)} and {\rm (c)}, since $z^{\prime}\notin \mathcal{C}$ and $W_{2}^{l+1}=\emptyset$. \smallskip

\noindent
{\bf Case (2)} $L_2(\mathcal{C})\cap V(K^{l+1})=\emptyset$. Then,	 $V(K^{l+1})\subseteq L_3(\mathcal{C})$, since $K^{l+1}\subseteq K\setminus A\subseteq \mathcal{C}\setminus L_1(\mathcal{C})$.  We take $x\in V(K^{l+1})$, then there is $x_1\in \big( \mathcal{C}\setminus L_1(\mathcal{C})\big) \cap V^{+}$ such that $(x_1,x)\in E(D)$, since $\mathcal{C}$ is strong. If $x_1\in V(K^{l+1})$, then there is $x_2\in \big( \mathcal{C}\setminus L_1(\mathcal{C})\big) \cap V^{+}$ such that $(x_2,x_1)\in E(D)$, since $\mathcal{C}$ is strong. Continuing with this process we obtain a maximal path $\mathcal{P}=(x_r,x_{r-1},\ldots ,x_1,x)$ such that $x_{r-1},\ldots ,x_1,x$ are different in $V(K^{l+1})$ and $x_1,\ldots ,x_r\in \big( \mathcal{C}\setminus L_1(\mathcal{C}) \big)\cap V^{+}$. Thus, $x_r\notin \cup_{j=1}^{s} V(M_j)$, since $x_r\notin L_1(\mathcal{C})$. Now, suppose $x_r\in V(M_j)$ for some $s<j\leqslant l$. So, $(x_r,x_{r-1})\in E(K^{j})$, $x_{r-1}\in V(K^{j})\setminus V(M_j)$ and $x_r\in V^{+}\cap V(M_j)$. Furthermore, $N_{D}^{+}(x_r)\cap W_{1}^{j}=\emptyset$, since $x_r\in \mathcal{C}\setminus L_1(\mathcal{C})$ and $\mathcal{C}\cap W_{1}^{j}=\emptyset$. A contradiction, by Lemma \ref{lemma-dic3}, since $W^{j}\cap V(K^{j})=\emptyset$. Hence, $x_r\notin \cup_{j=1}^{l} V(M_j)$. Consequently, $x_r\notin V(K)$ or $x_r\in V(K^{l+1})$. \smallskip

\noindent
\underline{Case (2.a)} $x_r\notin V(K)$. Then, take a maximal ROT $M_{l+1}$ in $K^{l+1}$ whose parent is $x_{r-1}$. Also, we take $V^{l+1}=\{ v_{l+1}\} =\{x_{r-1}\}$, $W_{1}^{l+1}=\emptyset$; and $W_{2}^{l+1}=\{ w_{l+1}\} =\{ x_r\}$. Thus, $M_{l+1}$ satisfies {\rm (a), (b)} and {\rm (c)}, since $W_{1}^{l+1}=\emptyset$, $x_r\in \big( \mathcal{C}\setminus L_1(\mathcal{C})\big) \cap V^{+}$ and $x_r\notin V(K)$. \smallskip

\noindent
\underline{Case (2.b)} $x_r\in V(K^{l+1})$. Then, $x_r\in L_3(\mathcal{C})$, since $V(K^{l+1})\subseteq L_3(\mathcal{C})$. Hence, there is $x_{r+1}\in \big( \mathcal{C}\setminus L_1(\mathcal{C})\big)\cap V^{+}$ such that $(x_{r+1},x_r)\in E(D)$, Then $\tilde{\mathcal{P}}=(x_{r+1},x_r,\ldots ,x_1,x)$ is an oriented walk. By the maximality of $\mathcal{P}$, we have that $x_r\in \{ x_{r-1},\ldots ,x_1,x\}$. Thus, $\mathcal{P}=(x_r,\ldots ,x_1,x)$ contains an oriented cycle $C$. We take a maximal unicycle oriented subgraph $M_{l+1}$ of $K^{l+1}$ with cycle $C$,
$V^{l+1}=C$ and $W_{1}^{l+1}=W_{2}^{l+1}=\emptyset$. Then, $M_{l+1}$ satisfies {\rm (a), (b)} and {\rm (c)}. \smallskip

\noindent
Since $K$ is finite, with this proceeding we obtain $M_1,\ldots , M_t\subseteq K$ such that $V(K)=\cup_{i=1}^{t} \ V(M_{i})$, $W_{1}^{i}\cap \mathcal{C}=\emptyset$ and $W_{2}^{i}\subseteq \big( \mathcal{C} \setminus L_1(\mathcal{C})\big) \cap V^{+}$ for $i=1,\ldots t$. We take $H:=\cup _{i=1}^{t} \ M_i$  with $W_j=\cup _{i=1}^{t} \ W_{j}^{i}$ for $j=1,2$. So, $V(H)=V(K)$. Also, $W_1 \cap \mathcal{C} =\emptyset$, then $W_1$ is a stable set, since $\mathcal{C}$ is a vertex cover. Furthermore, $W_2 \subseteq V^{+}$ and  $W_2\subseteq \mathcal{C} \setminus L_1(\mathcal{C})$, then $N_{D}^{+}(W_2)\subseteq \mathcal{C}$. Then, $N_{D}^{+}(W_2)\cap W_1=\emptyset$, since $\mathcal{C}\cap W_1=\emptyset$. If $x\in L_1(\mathcal{C})\cap V(K)$,  then there is $1\leqslant i\leqslant s$ such that $x=v_i$ and $M_i=\{ v_i\}$. Consequently, $deg_H(x)=deg_{M_i}(v_i)=0$. Thus, $\tilde{H} \cap L_1(\mathcal{C})=\emptyset$ implying $N_{D}^{+}(\tilde{H})\subseteq \mathcal{C}$, since $V(H)\subseteq \mathcal{C}$. Hence, $N_{D}^{+}(\tilde{H})\cap W_1=\emptyset$, since $W_1\cap \mathcal{C}=\emptyset$. Therefore, $H$ is a generating $\star$-semi-forest of $K$. \qed
\end{proof} 

\begin{theorem}\label{Perf-Unm}
Let $D=(G,\mathcal{O},w)$ be a weighted oriented graph where $G$ is a perfect graph, then $G$ has a $\tau$-reduction $H_1,\ldots ,H_s$ in complete subgraphs. Furthermore, $I(D)$ is unmixed if and only if each $H_i$ has no generating $\star$-semi-forests.
\end{theorem}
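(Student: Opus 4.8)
The plan is to prove the two assertions separately. For the first assertion, I want to produce a $\tau$-reduction of $G$ into complete subgraphs when $G$ is perfect. The key observation is Remark \ref{stableset-char}: $\beta(G)=\omega(\overline G)$, and by Remark \ref{tau-beta}, $\tau(G)=|V(G)|-\beta(G)=|V(G)|-\omega(\overline G)$. Since $G$ is perfect, Theorem \ref{prop-Perfect} gives that $\overline G$ is perfect, so $\chi(\overline G)=\omega(\overline G)$. Take an optimal colouring $c:V(\overline G)\to\{1,\dots,k\}$ with $k=\omega(\overline G)$; its colour classes $S_1,\dots,S_k$ are stable sets of $\overline G$, hence (again by Remark \ref{stableset-char}) the induced subgraphs $H_i:=G[S_i]$ are complete subgraphs of $G$. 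They partition $V(G)$, so I must check $\tau(G)=\sum_i\tau(H_i)$. Each $H_i$ is complete on $|S_i|$ vertices, so $\tau(H_i)=|S_i|-1$, hence $\sum_i\tau(H_i)=|V(G)|-k=|V(G)|-\omega(\overline G)=\tau(G)$. Thus $H_1,\dots,H_s$ (with $s=k$) is the desired $\tau$-reduction.

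For the second assertion I would use Theorem \ref{theorem42}, specifically that $I(D)$ is unmixed if and only if every strong vertex cover of $D$ has cardinality $\tau(G)$, together with Remark \ref{MinimalStrongProp} (there always exists a strong vertex cover of size $\tau(G)$, namely a minimal one). So $I(D)$ is \emph{not} unmixed precisely when there is a strong vertex cover $\mathcal C$ with $|\mathcal C|>\tau(G)$. The bridge to $\star$-semi-forests is Theorem \ref{theorem-oct29}: a subset $V(K)$ of an induced subgraph $K$ is contained in some strong vertex cover iff $K$ has a generating $\star$-semi-forest. The idea is then: if some $H_i$ has a generating $\star$-semi-forest, I can build a large strong vertex cover. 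Concretely, for a $\tau$-reduction in complete graphs one has a canonical minimal vertex cover obtained by choosing, in each $H_i$, a vertex cover of size $\tau(H_i)=|V(H_i)|-1$ (i.e. omitting exactly one vertex of the clique $H_i$); call its union $\mathcal C_0$, with $|\mathcal C_0|=\tau(G)$. If $H_{i_0}$ has a generating $\star$-semi-forest, then $V(H_{i_0})$ is contained in a strong vertex cover $\mathcal C$ of $D$; I want to argue $|\mathcal C|>\tau(G)$. Since $V(H_{i_0})\subseteq\mathcal C$ and $H_{i_0}$ is a clique on $\tau(H_{i_0})+1$ vertices, $\mathcal C$ contains at least $\tau(H_{i_0})+1$ vertices of $H_{i_0}$; since each $H_j$ is a clique, $\mathcal C$ contains at least $\tau(H_j)$ vertices of each $H_j$; summing over the partition, $|\mathcal C|\geqslant \tau(H_{i_0})+1+\sum_{j\neq i_0}\tau(H_j)=\tau(G)+1>\tau(G)$, so $I(D)$ is not unmixed.

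For the converse, suppose $I(D)$ is not unmixed; then there is a strong vertex cover $\mathcal C$ with $|\mathcal C|>\tau(G)=\sum_i\tau(H_i)$. Since each $H_i$ is a clique, $\mathcal C$ meets each $H_i$ in at least $\tau(H_i)=|V(H_i)|-1$ vertices; by the pigeonhole principle (because $|\mathcal C|>\sum_i\tau(H_i)$ and the $H_i$ partition $V(G)$), there must be some index $i_0$ with $|\mathcal C\cap V(H_{i_0})|\geqslant\tau(H_{i_0})+1=|V(H_{i_0})|$, i.e. $V(H_{i_0})\subseteq\mathcal C$. Then $K=H_{i_0}$ is an induced weighted oriented subgraph of $D$ with $V(K)\subseteq\mathcal C$ for a strong vertex cover $\mathcal C$, so by the implication $(1)\Rightarrow(2)$ of Theorem \ref{theorem-oct29}, $H_{i_0}$ has a generating $\star$-semi-forest. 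This establishes: $I(D)$ unmixed $\iff$ no $H_i$ has a generating $\star$-semi-forest.

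The main obstacle I anticipate is the subtlety in the pigeonhole step: I must be careful that the partition property of the $\tau$-reduction is genuinely used, that "$\mathcal C$ meets each clique $H_i$ in at least $|V(H_i)|-1$ vertices" is exactly the statement that $\mathcal C\cap V(H_i)$ is a vertex cover of the complete graph $H_i$, and that summing the lower bounds and comparing with $|\mathcal C|>\sum\tau(H_i)$ forces the containment $V(H_{i_0})\subseteq\mathcal C$ for at least one $i_0$. A secondary point requiring care is that Theorem \ref{theorem-oct29} requires $K$ to be an \emph{induced} weighted oriented subgraph; since each $H_i=G[S_i]$ is induced in $G$ by construction, the induced weighted oriented subgraph on $V(H_i)$ inherits the orientation and weights from $D$, so this hypothesis is met. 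Everything else is a routine assembly of the cited results.
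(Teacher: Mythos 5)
Your proposal is correct and follows essentially the same route as the paper: the same construction of the $\tau$-reduction via an optimal colouring of $\overline{G}$, and the same counting argument combining Theorem \ref{theorem-oct29}, Theorem \ref{theorem42} and Remark \ref{MinimalStrongProp}. The only cosmetic difference is that you argue the ``unmixed $\Rightarrow$ no generating $\star$-semi-forest'' direction by contraposition with an explicit pigeonhole step, while the paper shows directly that every strong vertex cover has cardinality exactly $\tau(G)$; the content is identical.
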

\begin{proof}
First, we prove $G$ has a $\tau$-reduction in complete graphs. By Theorem \ref{prop-Perfect}, $\overline{G}$ is perfect. Thus, $s:=w(\overline{G})=\chi(\overline{G})$. So, there is a $s$-colouring $c:V(\overline{G})\rightarrow \{ 1,\ldots ,s\}$. We take $V_i:=c^{-1}(i)$ for $i=1,\ldots ,s$. Then, $V_i$ is a stable set in $\overline{G}$, since $c$ is a $s$-colouring. Hence, by Remark \ref{stableset-char}, $H_i:=G[V_i]$ is a complete graph in $G$ and $s=\omega(\overline{G})=\beta(G)$. Furthermore, $V_1,\ldots ,V_s$ is a partition of $V(\overline{G})=V(G)$, since $c$ is a function. Consequently,
\begin{center}
$\sum\limits_{i=1}^{s} \tau (H_i)=\sum\limits_{i=1}^{s} \big( |V_i|-1\big) =\Big( \sum\limits_{i=1}^{s} |V_i|\Big) -s=|V(G)|-\beta (G)=\tau (G)$.
\end{center}
Finally, by Remark \ref{tau-beta}, $|V(G)|-\beta(G)=\tau(G)$, then,  $H_1,\ldots ,H_s$ is a $\tau $-reduction \linebreak of $G$. \medskip  

\noindent
Now, we prove that $I(D)$ is unmixed if and only if each $H_i$ has no generating $\star$-semi-forests.

\noindent
$\Rightarrow )$ 
By contradiction, assume $H_j$ has a generating $\star$-semi-forest, then by Theorem \ref{theorem-oct29} there is a strong vertex $\mathcal{C}$ such that $V_j\subseteq \mathcal{C}$. Furthermore, $\mathcal{C}\cap V_i$ is a vertex cover of $H_i$, then $|\mathcal{C}\cap V_i|\geqslant \tau (H_i)=|V_i|-1$ for $i\neq j$. Thus, $|\mathcal{C}|=\sum_{i=1}^{s}|\mathcal{C}\cap V_i|\geqslant |V_j|+\sum_{\substack{i=1\\ i\neq j}}^{s} (|V_i|-1)$, since $V_1,\ldots ,V_s$ is a partition of $V(G)$. Hence, by Remark \ref{tau-beta}, $|\mathcal{C}|>|V(G)|-s=\tau (G)$, since $s=\beta(G)$. A contradiction, by Remark \ref{MinimalStrongProp}, since $I(D)$ is unmixed. \medskip

\noindent
$\Leftarrow )$ Let $\mathcal{C}$ be a strong vertex cover, then $\mathcal{C}\cap V_i$ is a vertex cover of $H_i$. So, $|\mathcal{C} \cap V_i|\geqslant \tau (H_i)=|V_i|-1$ for $i=1, \ldots ,s$. Furthermore, by Theorem \ref{theorem-oct29}, $V_i\not\subseteq \mathcal{C}$. Consequently, $|\mathcal{C} \cap V_i|=|V_i|-1$. Thus, $|\mathcal{C}|=\sum_{i=1}^{s} \big( |V_i|-1\big)$, since $V_1,\ldots ,V_s$ is a partition of $V(G)$. Therefore, by {\rm (2)} in Theorem \ref{theorem42}, $I(D)$ is unmixed. \qed
\end{proof}

\section{Unmixedness of weighted oriented $SCQ$ graphs}
Let $D=(G,\mathcal{O},w)$ be a weighted oriented graph. If $P$ is a perfect matching of $G$ with the property {\bf (P)}, then in Proposition \ref{prop-unmixed}, we characterize when $|\mathcal{C}\cap e|=1$, for each strong vertex cover $\mathcal{C}$ of $D$ and each $e\in P$. Using Proposition \ref{prop-unmixed} in Corollary \ref{Koning-Unm}, we characterize when $I(D)$ is unmixed if $G$ is K\"oning. In Proposition \ref{Basic5Cycle-Equ}, we characterize the basic $5$-cycles, $C$ such that $|\mathcal{C}\cap V(C)|=3$ for each strong vertex cover $\mathcal{C}$ of $D$. Furthermore, in Theorem \ref{SCQ-char}, we characterize when $I(D)$ is unmixed if $G$ is an $SCQ$ graph (see Definition \ref{CondSCQ}). Finally, using this result we characterize the unmixed property of $I(D)$, when $G$ is simplicial or $G$ is chordal (see Corollary \ref{Simp-Chor-Unm}).

\begin{proposition}\label{prop-unmixed} 
Let $e$ be an edge of $G$. Hence, the following conditions are equivalent: 
\begin{enumerate}[noitemsep]
\item[{\rm (1)}] $|\mathcal{C}\cap e|=1$ for each strong vertex cover $\mathcal{C}$ of $D$.  
\item[{\rm (2)}] $e$ has the property {\bf (P)} and $N_D(b)\subseteq N_{D}^{+}(a)$ if $(a,b^{\prime})\in E(D)$ with $a\in V^{+}$ and $e=\{ b,b^{\prime} \}$.
 
\end{enumerate}
\end{proposition}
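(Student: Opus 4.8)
The whole argument runs through Theorem~\ref{theorem-oct29}. Write $K$ for the induced weighted oriented subgraph of $D$ with $V(K)=e=\{b,b'\}$. Since every vertex cover of $D$ meets $e$, we have $1\le|\mathcal C\cap e|\le2$ for each vertex cover $\mathcal C$, so condition~(1) is equivalent to: no strong vertex cover of $D$ contains $e$. By Theorem~\ref{theorem-oct29} applied to $K$, this is equivalent to: $K$ has no generating $\star$-semi-forest. Hence the proposition reduces to proving that $K$ has a generating $\star$-semi-forest if and only if (2) fails. (I read (2) symmetrically in $b,b'$, since the two endpoints of $e$ are interchangeable.)

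The plan is to classify the generating $\star$-semi-forests $H$ of $K$. As $|V(H)|=2$ and a cycle needs three vertices, $H$ has no unicycle component, so $H$ is a union of ROTs partitioning $\{b,b'\}$, and the only edge available inside $K$ is $e$ itself. Unwinding Definitions~\ref{ROT} and~\ref{semi-forest}, $H$ has exactly one of two forms: (i) a single ROT on $\{b,b'\}$ whose edge is $e$ oriented away from its parent, which by Definition~\ref{ROT}$(ii)$ (and $|V(T)|=2$) forces the parent into $V^{+}$, giving ``parent $b$, edge $(b,b')$'' or ``parent $b'$, edge $(b',b)$''; or (ii) the two one-vertex ROTs $\{b\}$, $\{b'\}$ with parents $b,b'$. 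For form~(i) with parent $b$ one computes $\tilde H=\{b\}$, and Definition~\ref{semi-forest}$(iii)$ is satisfiable (take $W_1=\{w_1\}$, $W_2=\emptyset$) exactly when $b$ has an in-neighbour outside $e$; this always holds, because $b\in V^{+}$ is not a source by Remark~\ref{rem-V-R-P} while $(b,b')\in E(D)$ keeps $b'$ out of $N_D^{-}(b)$. So form~(i) with parent $b$ exists iff $b\in V^{+}$ and $(b,b')\in E(D)$. For form~(ii) one has $\tilde H=\emptyset$, a witness pair $w_1\in N_D(b)\setminus e$, $w_2\in N_D(b')\setminus e$, and the constraint is a partition $\{w_1,w_2\}=W_1\sqcup W_2$ with $W_1$ stable, $W_2\subseteq V^{+}$, $(w_i,v_i)\in E(D)$ for $w_i\in W_2$, and $N_D^{+}(W_2)\cap W_1=\emptyset$.

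With this dictionary both implications are short case checks. Suppose (2) fails. If $e$ lacks property~\textbf{(P)}, pick neighbours $u$ of $b$, $v$ of $b'$ with $\{u,v\}\notin E(G)$; one checks $u,v\notin e$, so form~(ii) with $W_1=\{u,v\}$ (stable; $=\{u\}$ if $u=v$) and $W_2=\emptyset$ is a generating $\star$-semi-forest. Otherwise the second clause of (2) fails, so for some labelling $e=\{b,b'\}$ there is $a\in V^{+}$ with $(a,b')\in E(D)$ and a neighbour $c$ of $b$ with $c\notin N_D^{+}(a)$: if $a=b$ then $b\in V^{+}$ and $(b,b')\in E(D)$, which is form~(i); if $a\notin e$ then $c\notin e$ and $c\neq a$ (a common neighbour of $b,b'$ is ruled out, as \textbf{(P)} holds here), so form~(ii) with $w_1=c\in W_1$, $w_2=a\in W_2$ works since $N_D^{+}(a)\cap\{c\}=\emptyset$. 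Conversely, suppose $K$ has a generating $\star$-semi-forest $H$. If $H$ has form~(i), say parent $b$, then $b\in V^{+}$, $(b,b')\in E(D)$, and $a:=b$ violates (2) because $N_D(b)\not\subseteq N_D^{+}(b)$ (as $N_D^{-}(b)\neq\emptyset$). If $H$ has form~(ii), then either $W_2=\emptyset$ and $W_1$ contains the witnesses $u\in N_D(b)$, $v\in N_D(b')$ (with $\{u,v\}\notin E(G)$ or $u=v$), so $e$ fails \textbf{(P)} and hence (2); or $W_2$ contains a witness $w_j\in V^{+}$ of an endpoint $v_j$, and then its partner $w_k\in W_1$ is a neighbour of the remaining endpoint with $w_k\notin N_D^{+}(w_j)$, so (2) fails with $a:=w_j$ — unless $w_j$ witnesses both endpoints, giving a common neighbour of $b,b'$ and again a failure of \textbf{(P)}.

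I expect the real work to be the bookkeeping in form~(ii): verifying, for each splitting $W_1\sqcup W_2$ of the witness pair and each choice of witnesses, that all three clauses of Definition~\ref{semi-forest} hold (or provably cannot), while tracking the degenerate configurations $w_1=w_2$, $a\in e$, $c=a$, $\deg_G(b)=1$, and $b$ a source. Those are precisely the points where property~\textbf{(P)} and Remark~\ref{rem-V-R-P} (sources have weight $1$) enter, and overlooking any one of them would break the equivalence.
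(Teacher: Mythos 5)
Your reduction of (1) to the nonexistence of a generating $\star$-semi-forest of $K=D[e]$ via Theorem \ref{theorem-oct29} is legitimate, and your classification of the $\star$-semi-forests on two vertices (form (i): a single ROT containing $e$ with parent in $V^{+}$; form (ii): two one-vertex ROTs) is correct; the direction ``(2) fails $\Rightarrow$ a generating $\star$-semi-forest exists'' checks out, including the degenerate configurations. Note that this is a genuinely different route for $(1)\Rightarrow(2)$: the paper proves that implication by building explicit strong vertex covers through Proposition \ref{GeneratinAStrongVC}, and only invokes Theorem \ref{theorem-oct29} for $(2)\Rightarrow(1)$; your version is more uniform but leans entirely on the combinatorial dictionary.

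There is, however, a genuine gap in your converse direction, inside form (ii). You assert that if $W_2$ contains a witness $w_j$, then ``its partner $w_k\in W_1$''. Nothing in Definition \ref{semi-forest} forces this: the partition may be $W_1=\emptyset$, $W_2=\{w_1,w_2\}$ with $w_1\neq w_2$, $w_1,w_2\in V^{+}$, $(w_1,b),(w_2,b')\in E(D)$. In that case the constraint $N_{D}^{+}(W_2)\cap W_1=\emptyset$ is vacuous and your argument extracts no violation of (2). The conclusion is still true, but it requires an extra step — exactly the step the paper's proof of $(2)\Rightarrow(1)$ performs with its two successive applications of condition (2). Concretely: if $\{w_1,w_2\}\notin E(G)$ then \textbf{(P)} fails and you are done; otherwise $\{w_1,w_2\}\in E(G)$ carries exactly one orientation, say $(w_1,w_2)\in E(D)$, so $w_1\notin N_{D}^{+}(w_2)$ while $w_1\in N_D(b)$, $(w_2,b')\in E(D)$ and $w_2\in V^{+}$, and the second clause of (2) fails with $a=w_2$ (symmetrically with $a=w_1$ if the edge is oriented the other way). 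Without this case the claimed equivalence ``$K$ has a generating $\star$-semi-forest iff (2) fails'' is not established.
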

\begin{proof} 
${\rm (1)} \Rightarrow {\rm (2)}$
First, we show $e$ has the property {\bf (P)}. By contradiction, suppose there are $\{ a,b\}, \{ a^{\prime},b^{\prime} \} \in E(G)$ such that $\{ a,a^{\prime}\} \notin E(G)$. This implies, there is a maximal stable set $S$ such that $\{ a,a^{\prime}\} \subseteq S$. So, $\tilde{\mathcal{C}}=V(G)\setminus S$ is a minimal vertex cover. Consequently, $\tilde{\mathcal{C}}$ is strong. Furthermore, $a,a^{\prime} \notin \tilde{\mathcal{C}}$, then $b,b^{\prime}\in \tilde{\mathcal{C}}$, since $\{ a,b\},\{ a^{\prime},b^{\prime}\} \in E(G)$. A contradiction by {\rm (1)}.  Now, assume $(a,b^{\prime})\in E(D)$ with $a\in V^{+}$ and $e=\{ b,b^{\prime}\}$, then we will prove that $N_D(b)\subseteq N_{D}^{+}(a)$. By contradiction, suppose there is $c\in N_D(b)\setminus N_{D}^{+}(a)$. We take a maximal stable set $S$ such that $b\in S$. Thus, $\mathcal{C}_1=V(G)\setminus S$ is a minimal vertex cover such that $b\notin \mathcal{C}_1$. By Remark \ref{Compl-einC}, $\mathcal{C}=\big( \mathcal{C}_1\setminus \{ c \} \big)\cup N_{D}(c)\cup N_{D}^{+}(a)$ is a vertex cover. Furthermore, $c\notin \mathcal{C}$, since $c\notin N_{D}^{+}(a)$. By Proposition \ref{GeneratinAStrongVC}, there is a strong vertex cover $\mathcal{C}^{\prime}$ such that $N_{D}^{+}(a)\subseteq \mathcal{C}^{\prime}\subseteq \mathcal{C}$, since $a\in V^{+}$. Also, $b^{\prime}\in N_{D}^{+}(a)\subseteq \mathcal{C}^{\prime}$ and $c\notin \mathcal{C}^{\prime}$, since $(a,b^{\prime})\in E(D)$ and $c\notin \mathcal{C}$. Then, $b\in N_{D}(c)\subseteq \mathcal{C}^{\prime}$. Hence, $\{ b,b^{\prime} \} \subseteq \mathcal{C}^{\prime}$. This is a contradiction, by {\rm (1)}. \medskip

\noindent
${\rm (2)} \Rightarrow {\rm (1)}$
By contradiction, assume there is a strong vertex cover $\mathcal{C}$ of $D$ such that $|\mathcal{C}\cap e|\neq 1$. So, $|\mathcal{C}\cap e|=2$, since $\mathcal{C}$ is a vertex cover. Hence, by Theorem \ref{theorem-oct29}, there is a generating $\star$-semi-forest $H$ of $e$. We set $e=\{ z,z^{\prime}\}$. First, assume $H$ is not connected. Then, using the Definition \ref{semi-forest}, we have $H=M_1\cup M_2$ where $M_1=\{ v_1\}$, $M_2=\{ v_2\}$ and $w_1,w_2\in W$ such that $w_i\in N_D(v_i)$ for $i=1,2$. Thus, $\{ z,z^{\prime}\} =\{ v_1,v_2\}$ and $\{ w_1,w_2\} \in E(G)$, since $e$ satisfies the property {\bf(P)}. This implies $|W_1\cap \{ w_1,w_2\}|\leqslant 1$, since $W_1$ is a stable set. Hence, we can suppose $w_2\in W_2$, then $w_2\in V^{+}$ and $(w_2,z^{\prime})\in E(D)$. Consequently, by {\rm (2)}, $w_1\in N_D(z)\subseteq N_{D}^{+}(w_2)$, then $(w_2,w_1)\in E(D)$. Furthermore, by $(iii)$ in Definition \ref{semi-forest}, $N_{D}^{+}(W_2)\cap W_1=\emptyset$, then $w_1\in W_2$. So, $w_1\in V^{+}$ and $(w_1,z)\in E(D)$. By {\rm (1)} with $a=w_1$, we have $(w_1,w_2)\in E(D)$. A contradiction, then $H$ is connected. Thus, $H$ is a ROT with $V(H)=\{ z,z^{\prime}\}$. We can suppose $v_1=z$ and $W^{H}=\{ w_1\}$, then $(z,z^{\prime})\in E(D)$, $w_1\in N_D(z)$ and $z=v_1\in \tilde{H}$, since $deg_H(v_1)=1$. If $w_1\in N_{D}^{+}(z)$, then $w_1\in W_1$, since $z=v_1$. A contradiction, since $N_{D}^{+}(\tilde{H})\cap W_1=\emptyset$. Then, $w_1\notin N_{D}^{+}(z)$. By Remark \ref{SubsetH}, $z=v_1\in \tilde{H}\subseteq V^{+}$. Therefore, by {\rm (1)} (taking $a=b=z$ and $b^{\prime}=z^{\prime}$), we have $N_D(z)\subseteq N_{D}^{+}(z)$, since $e=\{ z,z^{\prime}\}$ and $z^{\prime}\in N_{D}^{+}(z)$. A contradiction, since $w_1\in N_D(z)\setminus N_{D}^{+}(z)$.  \qed 
\end{proof}

\begin{corollary}\label{Koning-Unm}{\rm \cite[Theorem 3.4]{V-R-P}}
Let $D=(G,\mathcal{O},w)$ be a weighted oriented graph, where $G$ is K\"oning without isolated vertices. Hence, $I(D)$ is unmixed if and only if $D$ satisfies the following two conditions:
\begin{enumerate}[noitemsep]
\item[{\rm (a)}] G has a perfect matching $P$ with the property {\bf (P)}.
\item[{\rm (b)}] $N_D(b)\subseteq N_{D}^{+}(a)$, when $a\in V^{+}$, $\{ b,b^{\prime}\} \in P$ and $b^{\prime}\in N_{D}^{+}(a)$.
\end{enumerate}
\end{corollary}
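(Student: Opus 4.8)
The plan is to derive the corollary from Proposition \ref{prop-unmixed} together with the characterization of well-covered K\"oning graphs in Theorem \ref{Koning-Char} and a counting argument based on a perfect matching. The key observation is that when $P$ is a perfect matching of $G$, the edges of $P$ partition $V(G)$ into pairs, so every vertex cover $\mathcal{C}$ of $D$ satisfies $|\mathcal{C}|=\sum_{e\in P}|\mathcal{C}\cap e|\geqslant |P|$, and moreover $\nu(G)=|P|$, whence $\tau(G)=|P|$ because $G$ is K\"oning. Thus $|\mathcal{C}|=\tau(G)$ if and only if $|\mathcal{C}\cap e|=1$ for every $e\in P$.

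For the forward direction, I would assume $I(D)$ is unmixed, use Theorem \ref{theorem42}~{\rm (3)} and Remark \ref{1star} to conclude that $G$ is well-covered, and then invoke Theorem \ref{Koning-Char} (applicable since $G$ is K\"oning without isolated vertices) to obtain a perfect matching $P$ with the property {\bf (P)}; this gives condition {\rm (a)}. Next, by Remark \ref{MinimalStrongProp}, every strong vertex cover $\mathcal{C}$ of $D$ has $|\mathcal{C}|=\tau(G)=|P|$, so the counting above forces $|\mathcal{C}\cap e|=1$ for each $e\in P$. Applying Proposition \ref{prop-unmixed} (the implication {\rm (1)} $\Rightarrow$ {\rm (2)}) to each $e=\{b,b^{\prime}\}\in P$ then yields exactly condition {\rm (b)}.

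For the converse, I would assume {\rm (a)} and {\rm (b)} and let $P$ be the matching from {\rm (a)}. Each $e\in P$ has the property {\bf (P)}, so by Proposition \ref{prop-unmixed} (the implication {\rm (2)} $\Rightarrow$ {\rm (1)}), condition {\rm (b)} gives $|\mathcal{C}\cap e|=1$ for each $e\in P$ and each strong vertex cover $\mathcal{C}$ of $D$. Summing over $P$, which partitions $V(G)$, shows $|\mathcal{C}|=|P|$ for every strong vertex cover $\mathcal{C}$, so all strong vertex covers of $D$ have the same cardinality, and Theorem \ref{theorem42}~{\rm (2)} gives that $I(D)$ is unmixed.

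The argument is essentially routine once Proposition \ref{prop-unmixed} is available; the only point requiring a little care is the bookkeeping that links ``$|\mathcal{C}|=\tau(G)$ for every strong vertex cover'' (the unmixedness criterion) with ``$|\mathcal{C}\cap e|=1$ for every $e\in P$'' (the per-edge hypothesis of Proposition \ref{prop-unmixed}), which relies on $P$ being both perfect and a maximum matching in the K\"oning graph $G$. I do not expect any serious obstacle beyond keeping track of which direction of each cited equivalence is being used.
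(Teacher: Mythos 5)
Your proposal is correct and follows essentially the same route as the paper: well-coveredness via Theorem \ref{theorem42} and Remark \ref{1star}, Theorem \ref{Koning-Char} for the perfect matching with property {\bf (P)}, the identity $\tau(G)=\nu(G)=|P|$ with Remark \ref{MinimalStrongProp} to force $|\mathcal{C}\cap e|=1$ on each matching edge, and Proposition \ref{prop-unmixed} in both directions. No substantive differences from the paper's argument.
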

\begin{proof}
$\Rightarrow )$
By Theorem \ref{theorem42}, $I(G)$ is unmixed. Thus, by Remark \ref{1star} and Theorem \ref{Koning-Char}, $G$ has a perfect matching $P$ with the property {\bf (P)}. Consequently, $\nu(G)=|P|$. Also, $\tau(G)=\nu(G)$, since $G$ is K\"oning. So, $\tau(G)=|P|$. Now, we take a strong vertex cover $\mathcal{C}$ of $D$ and $e\in P$. Then, $|\mathcal{C}\cap e|\geqslant 1$. Furthermore, by Remark \ref{MinimalStrongProp}, $|\mathcal{C}|=\tau(G)=|P|$. Hence, $|\mathcal{C}\cap e|=1$, since $\mathcal{C}=\cup_{\tilde{e}\in P} \ \mathcal{C}\cap \tilde{e}$. Therefore, by Proposition \ref{prop-unmixed}, $D$ satisfies {\rm (b)}. \medskip

\noindent
$\Leftarrow )$
We take a strong vertex cover $\mathcal{C}$ of $D$. By Proposition \ref{prop-unmixed}, $|\mathcal{C}\cap e|=1$ for each $e\in P$, since $D$ satisfies {\rm (a)} and {\rm (b)}. This implies $|\mathcal{C}|=|P|$, since $P$ is a perfect matching. Therefore, by $(2)$ in Theorem \ref{theorem42}, $I(D)$ is unmixed. \qed
\end{proof}

\begin{lemma}\label{lemma-sep11}
If there is a basic $5$-cycle $C=(z_1, z_2, z_3, z_4, z_5, z_1)$ with $(z_{1},z_{2})$, $(z_{2},z_{3})\in E(D)$, $z_2 \in V^{+}$ and $C$ satisfies one of the following conditions:
\begin{enumerate}[noitemsep]
\item[{\rm (a)}] $(z_{3},z_{4})\in E(D)$ with $z_{3} \in V^{+}$.
\item[{\rm (b)}] $(z_{1},z_{5})$, $(z_{5},z_{4})\in E(D)$ with $z_{5} \in V^{+}$. 
\end{enumerate}
then there is a strong vertex cover $\tilde{\mathcal{C}}$ such that $|\tilde{\mathcal{C}}\cap V(C)|=4$.
\end{lemma}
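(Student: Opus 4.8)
The plan is to build an explicit $\star$-semi-forest on $V(C)$ that omits exactly one vertex of $C$, and then apply Theorem~\ref{theorem-oct29} to get a strong vertex cover $\tilde{\mathcal{C}}$ containing those four vertices; since $C$ is induced and has cover number $3$, the strong cover $\tilde{\mathcal{C}}$ will necessarily be forced up to cardinality $4$ on $V(C)$, giving $|\tilde{\mathcal{C}}\cap V(C)|=4$. Concretely, I would take $K:=G[\{z_1,z_2,z_3,z_4\}]$ (case (a)) or $K:=G[\{z_1,z_2,z_3,z_5\}]$ (case (b)) — note $K$ is an induced subgraph of $D$ because $C$ is an induced $5$-cycle, so the only edges inside these $4$ vertices are the consecutive ones along $C$. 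On $K$ I would exhibit a ROT $T_1$ and the auxiliary sets $W_1,W_2$ of Definition~\ref{semi-forest}.

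First I would handle case (a). Here $C$ supplies the oriented path $z_1\to z_2\to z_3\to z_4$ with $z_2,z_3\in V^{+}$; I would try to make this the ROT $T_1$ with parent $v_1=z_1$. Condition $(i)$ of Definition~\ref{ROT} is immediate. For condition $(ii)$ I need every vertex of weight $1$ in $T_1$ to have degree $1$ and to be a non-parent leaf — but $z_1$ is the parent and is allowed $w(z_1)=1$ only if it has degree $1$ in $T_1$ (wait, re-reading: the parent may have any weight, but if $w(v)=1$ then $\deg_T(v)=1$; here $\deg_{T_1}(z_1)=1$, so this is fine regardless of $w(z_1)$). The only other weight-$1$ issue is $z_4$, which is a leaf of degree $1$, so it is fine too; $z_2,z_3\in V^{+}$ have no constraint. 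Thus $T_1$ is a genuine ROT. Now I take $W=\{w_1\}$ with $w_1:=z_5$, which is a neighbour of $v_1=z_1$ and lies outside $V(K)$; I put $W_1=\emptyset$, $W_2=\{z_5\}$ provided $(z_5,z_1)\in E(D)$ and $z_5\in V^{+}$. The subtlety is that the hypothesis of case (a) does not guarantee $z_5\in V^{+}$ or the orientation of the edge $\{z_1,z_5\}$. So instead I would set $W_1=\{z_5\}$ when the edge is not suitably oriented/weighted: then $W_1=\{z_5\}$ is automatically a stable set (one vertex), $W_2=\emptyset$, and I only need $N_D^{+}(\tilde H)\cap W_1=\emptyset$, i.e. $z_1,z_2,z_3\notin N_D^{-}(z_5)$ — equivalently $z_5\notin N_D^{+}(z_1)\cup N_D^{+}(z_2)\cup N_D^{+}(z_3)$. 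Since $C$ is a $5$-cycle, $z_5$ is adjacent only to $z_1$ and $z_4$ among $C$'s vertices, so only $z_5\in N_D^{+}(z_1)$ could fail the condition. If it does fail (i.e. $(z_1,z_5)\in E(D)$), then $z_5\notin N_D^+(z_2)\cup N_D^+(z_3)$ still holds, but $z_5\in N_D^+(z_1)$ and $z_1=v_1$ has $\deg_{T_1}(v_1)=1$, so $v_1\in\tilde H$, which breaks $W_1$. In that case I switch to using $W_2=\{z_5\}$: I need $(z_5,z_1)\in E(D)$ and $z_5\in V^{+}$; $(z_1,z_5)\in E(D)$ contradicts $(z_5,z_1)\in E(D)$ for a simple oriented graph, so this case cannot actually coexist — hence exactly one of the two choices always works. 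I would write this dichotomy out cleanly.

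Case (b) is symmetric but with a longer ROT: the hypotheses give the oriented path $z_4\leftarrow z_5\leftarrow z_1\to z_2\to z_3$, i.e. $z_5\to z_4$ and $z_1\to z_5$ and $z_1\to z_2\to z_3$, with $z_2,z_5\in V^{+}$. I would take $T_1$ to be this tree with parent $v_1=z_1$: then $\deg_{T_1}(z_1)=2$, so $z_1\in\tilde H$ and $z_1$ must be in $V^{+}$ — but wait, the hypothesis of (b) does not assert $z_1\in V^{+}$. Here I use Remark~\ref{rem-V-R-P}: a vertex of weight $1$ is a source, and $z_1$ is a source in $T_1$ but it has out-degree $\geq 2$ in $D$ via these edges; however Definition~\ref{ROT}$(ii)$ forces $\deg_{T}(z_1)=1$ if $w(z_1)=1$, so if $w(z_1)=1$ I must instead split: keep the branch $z_1\to z_2\to z_3$ as the ROT with parent $z_1$ (degree $1$, fine), and absorb $z_5,z_4$ into the $W$-data. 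Concretely $z_4$ has only its edge to $z_5$ inside $V(K)$; I would make $\{z_5,z_4\}$ a second ROT — but $z_4$ might be weight $1$ with degree $1$, that's fine, parent $z_5$... no, parent must satisfy $(ii)$ too. This is exactly the point where the argument is most delicate: the main obstacle is handling the weight-$1$/source constraints on parents and on internal vertices of the ROT, choosing between putting border vertices into $W_1$ (needing a stable set and the $N_D^+$-avoidance) versus $W_2$ (needing the right orientation and $V^+$). My plan is to do a short case analysis on $w(z_1)$ (and in case (b) possibly on $w(z_4)$), in each branch exhibiting the explicit ROT(s) and $W$-partition, then invoke Theorem~\ref{theorem-oct29} to produce $\tilde{\mathcal{C}}\supseteq V(K)$ with $|V(K)|=4$, and finally observe that $\tilde{\mathcal{C}}\cap V(C)$ cannot be all of $V(C)$ (else $L_3$ would contain the remaining vertex with no in-neighbour in $V^+\cap\mathcal C$, or more simply the cover would be non-minimal on the induced $C_5$ in a way contradicting strongness) so $|\tilde{\mathcal{C}}\cap V(C)|=4$ exactly.
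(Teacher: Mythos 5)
Your overall strategy (build a $\star$-semi-forest on four vertices of $C$ and invoke Theorem~\ref{theorem-oct29}) is a legitimately different route from the paper, which never mentions semi-forests here: the paper simply sets $\mathcal{C}=(\mathcal{C}_0\setminus V(C))\cup N_D(z_1)\cup N_D^{+}(z_2,x)$ with $x=z_3$ in case (a) and $x=z_5$ in case (b), checks that $z_1\notin N_D(z_1)\cup N_D^{+}(z_2,x)$, and applies Proposition~\ref{GeneratinAStrongVC} with $A=\{z_2,x\}$ together with Remark~\ref{einC} to get a strong cover containing $\{z_2,z_3,z_4,z_5\}$ and missing $z_1$. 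However, as written your construction has genuine gaps, all traceable to your choice of $K$: you keep $z_1$ and discard $z_5$, whereas the hypotheses are rigged to let you discard $z_1$.

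Concretely: (i) In case (a) you make $z_1$ the parent of a four-vertex ROT, but Definition~\ref{ROT}$(ii)$ forbids a weight-one parent of a non-singleton ROT (the disjunct is ``$\deg_T(x)=1$ \emph{and} $x\neq v$''), and Remark~\ref{SubsetH} requires $\tilde H\subseteq V^{+}$ while a degree-one parent lies in $\tilde H$; so your tree is illegal whenever $w(z_1)=1$, which the hypotheses do not exclude. Splitting $z_1$ off as a singleton ROT does not help, because the remaining ROT $z_2\to z_3\to z_4$ would need a $W$-vertex adjacent to $z_2$ outside $\{z_1,\dots,z_4\}$, and $z_2$ may have degree $2$. (ii) Your dichotomy for placing $z_5$ is a non sequitur: if $(z_1,z_5)\in E(D)$ then $z_5\in N_D^{+}(\tilde H)$ kills the $W_1$ option \emph{and} the $W_2$ option needs $(z_5,z_1)\in E(D)$, which is then false — the two sufficient conditions are mutually exclusive, but neither need hold, so both options can fail simultaneously. (iii) Case (b) is left unfinished by your own account. (iv) Your closing claim that the resulting strong cover cannot contain all of $V(C)$ is unjustified: strong covers of $D$ can contain an entire basic $5$-cycle (that is precisely what Lemma~\ref{NotStarCover} and Proposition~\ref{Basic5Cycle-Equ} are about), and Theorem~\ref{theorem-oct29} only guarantees $V(K)\subseteq\tilde{\mathcal{C}}$, not $|\tilde{\mathcal{C}}\cap V(C)|=4$. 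The repair is to take $K=G[\{z_2,z_3,z_4,z_5\}]$: in case (a) use the ROT $z_2\to z_3\to z_4$ with parent $z_2$ plus the singleton $\{z_5\}$, in case (b) the ROTs $z_2\to z_3$ and $z_5\to z_4$ with parents $z_2,z_5\in V^{+}$, in all cases with $W_1=\{z_1\}$ (which works because $z_1\in N_D^{-}(z_2)$, resp.\ $z_1\in N_D^{-}(z_2)\cap N_D^{-}(z_5)$, and $z_1\not\sim z_3$); but even then, to pin the count at exactly $4$ you must argue $z_1\notin\tilde{\mathcal{C}}$, which requires either the internals of the proof of Theorem~\ref{theorem-oct29} (where $\tilde{\mathcal{C}}\cap W_1=\emptyset$) or a direct appeal to Proposition~\ref{GeneratinAStrongVC} as in the paper.
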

\begin{proof}
We take $\mathcal{C}=\big( \mathcal{C}_0\setminus V(C)\big) \cup N_D (z_1) \cup N_{D}^{+}(z_2,x)$ where $\mathcal{C}_0$ is a vertex cover and $x=z_3$ if $C$ satisfies {\rm (a)} or $x=z_5$ if $C$ satisfies {\rm (b)}. Thus, $x\in V^{+}$. Furthermore, $z_2,z_3,z_5\in N_D(z_1)\cup N_{D}^{+}(z_2)$ and $z_4\in N_{D}^{+}(z_3)$ if $C$ satisfies {\rm (a)} or $z_4\in N_{D}^{+}(z_5)$ if $C$ satisfies {\rm (b)}. Hence, $\{ z_{2}, z_{3}, z_{4}, z_{5} \} \subseteq N_D (z_1)\cup N_{D}^{+}(z_2,x)$. Consequently, $\{ z_2,z_3,z_4,z_5\} \subseteq \mathcal{C}$, implying $\mathcal{C}$ is a vertex cover, since $\mathcal{C}_0$ is vertex cover and $N_D(z_1)\subseteq \mathcal{C}$. Also, $z_1 \notin \mathcal{C}$, since $z_1 \notin N_D (z_1)\cup N_{D}^{+}(z_2 , z_3)$ and $z_1\notin N_{D}^{+}(z_5)$ if $C$ satisfies {\rm (b)}. By Proposition \ref{GeneratinAStrongVC}, there is a strong vertex cover $\mathcal{C}^{\prime}$ such that $N_{D}^{+}(z_2,x)\subseteq \mathcal{C}^{\prime}\subseteq \mathcal{C}$, since $\{ z_2,x\} \subseteq V^{+}$. So, $z_1\notin \mathcal{C}^{\prime}$, since $z_1\notin \mathcal{C}$. Then, by Remark \ref{einC}, $N_D(z_1)\subseteq \mathcal{C}^{\prime}$. Hence, $\{ z_2,z_3,z_4,z_5\} \subseteq N_D(z_1)\cup N_{D}^{+}(z_2,x)\subseteq \mathcal{C}^{\prime}$. Therefore, $|\mathcal{C}^{\prime} \cap V(C)|=4$, since $z_1\notin \mathcal{C}^{\prime}$.   \qed
\end{proof}

\begin{definition}\rm
Let $C$ be an induced $5$-cycle, we say that $C$  has the {\it $\star$-property\/} if for each $(a,b)\in E(C)$ where $a\in V^{+}$, then $C=(a^{\prime},a,b,b^{\prime},c,a^{\prime})$ with the following properties:
\begin{enumerate}[noitemsep]
\item[$(\star .1)$] $(a^{\prime},a)\in E(D)$ and $w(a^{\prime})=1$.
\item[$(\star .2)$] $N_{D}^{-}(a)\subseteq N_{D}(c)$ and $N_{D}^{-}(a)\cap V^{+}\subseteq N_{D}^{-}(c)$.
\item[$(\star .3)$] $N_D(b^{\prime})\subseteq N_D(a^{\prime}) \cup N_{D}^{+}(a)$ and $N_{D}^{-}(b^{\prime})\cap V^{+}\subseteq N_{D}^{-}(a^{\prime})$.
\end{enumerate}
\end{definition}

\begin{lemma}\rm\label{NotStarCover}
Let  $C=(a_{1}^{\prime},a_1,b_1,b_{1}^{\prime},c_1,a_{1}^{\prime})$ be a basic $5$-cycle of $D$, such that $(a_{1}^{\prime},a_1)\in E(D)$, $deg_D(a_1)\geqslant 3$, $deg_D(c_1)\geqslant 3$ and $w(b_1)=1$. If there is a strong vertex cover $\mathcal{C}$ of $D$, such that $V(C)\subseteq \mathcal{C}$, then $C$ has no the $\star$-property.
\end{lemma}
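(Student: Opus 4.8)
The plan is to argue by contradiction. Suppose $C$ has the $\star$-property and that $\mathcal{C}$ is a strong vertex cover of $D$ with $V(C)\subseteq\mathcal{C}$; I will produce a contradiction using only the strongness of $\mathcal{C}$ at the degree-two vertices of $C$ together with the conclusion $(\star.1)$. First I would extract the geometry forced by the word \emph{basic}: since $C=(a_1^{\prime},a_1,b_1,b_1^{\prime},c_1,a_1^{\prime})$ contains no two adjacent vertices of degree $\geqslant 3$ while $deg_D(a_1)\geqslant 3$ and $deg_D(c_1)\geqslant 3$, every $C$-neighbour of $a_1$ and of $c_1$ has degree $2$ in $G$; hence $deg_G(a_1^{\prime})=deg_G(b_1)=deg_G(b_1^{\prime})=2$, so $N_D[a_1^{\prime}]=\{a_1^{\prime},a_1,c_1\}$, $N_D[b_1]=\{b_1,a_1,b_1^{\prime}\}$ and $N_D[b_1^{\prime}]=\{b_1^{\prime},b_1,c_1\}$ all lie in $V(C)\subseteq\mathcal{C}$, and by Remark \ref{VertexL3} the vertices $a_1^{\prime},b_1,b_1^{\prime}$ belong to $L_3(\mathcal{C})$. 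Since $\mathcal{C}$ is strong, $b_1\in L_3(\mathcal{C})$ gives $(y,b_1)\in E(D)$ with $y\in V^{+}$; as $N_D^{-}(b_1)\subseteq\{a_1,b_1^{\prime}\}$, either $y=b_1^{\prime}$ or $y=a_1$, and I split into these two cases.

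In the case $y=b_1^{\prime}$ we have $(b_1^{\prime},b_1)\in E(D)$ and $b_1^{\prime}\in V^{+}$. Applying the $\star$-property to the edge $(b_1^{\prime},b_1)$ of $C$ (relabelling $C$ as $(c_1,b_1^{\prime},b_1,a_1,a_1^{\prime},c_1)$ to fit the pattern $(a^{\prime},a,b,b^{\prime},c,a^{\prime})$) forces, by $(\star.1)$, both $(c_1,b_1^{\prime})\in E(D)$ and $w(c_1)=1$. Now $N_D^{-}(b_1^{\prime})\subseteq\{b_1,c_1\}$, while $(b_1^{\prime},b_1)\in E(D)$ rules out $b_1$, so $N_D^{-}(b_1^{\prime})=\{c_1\}$. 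Since $b_1^{\prime}\in L_3(\mathcal{C})$ and $\mathcal{C}$ is strong, $b_1^{\prime}$ must have an in-neighbour in $V^{+}$, which can only be $c_1$; hence $c_1\in V^{+}$, contradicting $w(c_1)=1$.

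In the case $y=a_1$ we have $(a_1,b_1)\in E(D)$ and $a_1\in V^{+}$. Applying the $\star$-property to $(a_1,b_1)$ (relabelling $C$ as $(a_1^{\prime},a_1,b_1,b_1^{\prime},c_1,a_1^{\prime})$) gives, by $(\star.1)$, $w(a_1^{\prime})=1$ (and $(a_1^{\prime},a_1)\in E(D)$, which is already assumed). Strongness at $a_1^{\prime}\in L_3(\mathcal{C})$ gives an in-neighbour of $a_1^{\prime}$ in $V^{+}$; since $N_D^{-}(a_1^{\prime})\subseteq\{a_1,c_1\}$ and $(a_1^{\prime},a_1)\in E(D)$ excludes $a_1$, this in-neighbour is $c_1$, so $(c_1,a_1^{\prime})\in E(D)$ and $c_1\in V^{+}$. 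Applying the $\star$-property once more, now to the edge $(c_1,a_1^{\prime})$ (relabelling $C$ as $(b_1^{\prime},c_1,a_1^{\prime},a_1,b_1,b_1^{\prime})$), gives by $(\star.1)$ that $(b_1^{\prime},c_1)\in E(D)$ and $w(b_1^{\prime})=1$. Finally, strongness at $b_1^{\prime}\in L_3(\mathcal{C})$ gives an in-neighbour of $b_1^{\prime}$ in $V^{+}$; since $N_D^{-}(b_1^{\prime})\subseteq\{b_1,c_1\}$ and $(b_1^{\prime},c_1)\in E(D)$ excludes $c_1$, this in-neighbour is $b_1$, so $b_1\in V^{+}$, contradicting $w(b_1)=1$. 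In both cases we reach a contradiction, so $C$ has no $\star$-property.

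The essential insight, and the only place any thought is needed, is that the hypothesis ``$C$ basic'' together with $deg_D(a_1),deg_D(c_1)\geqslant 3$ pins the other three vertices of $C$ to degree $2$; once that is in hand, the fact that $V(C)\subseteq\mathcal{C}$ places their closed neighbourhoods inside $\mathcal{C}$, and the strongness axiom can be invoked repeatedly at $a_1^{\prime},b_1,b_1^{\prime}$. The remaining obstacle is purely bookkeeping: each time one invokes the $\star$-property one must relabel $C$ into the pattern $(a^{\prime},a,b,b^{\prime},c,a^{\prime})$ around the chosen oriented edge, and then check that the new oriented edge produced by $(\star.1)$ is compatible with the orientations already forced, so that the relevant in-neighbourhood ($N_D^{-}(b_1^{\prime})$, resp. $N_D^{-}(a_1^{\prime})$) shrinks to the single vertex needed for the contradiction. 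I do not expect Theorem \ref{theorem-oct29} to be required, though one could phrase the same argument through a generating $\star$-semi-forest of $C$, whose structure (no unicycle component, because $w(b_1)=1$ and $C$ is induced; all parents inside $\{a_1,c_1\}$, by the degree count) leads to the identical orientation constraints.
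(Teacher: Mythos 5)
Your proof is correct and follows essentially the same route as the paper: basicness forces $a_1^{\prime}$, $b_1$, $b_1^{\prime}$ to have degree $2$, hence they lie in $L_3(\mathcal{C})$, and repeated use of strongness together with $(\star.1)$ forces orientations and weights that contradict $w(b_1)=1$. The only difference is cosmetic: the paper avoids your initial case split at $b_1$ by invoking strongness directly at $a_1^{\prime}$ (which at once gives $(c_1,a_1^{\prime})\in E(D)$ with $c_1\in V^{+}$), then applies $(\star.1)$ to $(c_1,a_1^{\prime})$ and derives the contradiction at $b_1^{\prime}$ — exactly the chain appearing inside your case $y=a_1$, while your case $y=b_1^{\prime}$ is a correct extra branch the paper never needs.
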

\begin{proof}
By contradiction, suppose $C$ has the $\star$-property and there is a strong vertex cover $\mathcal{C}$, such that $V(C)\subseteq \mathcal{C}$. Then, $deg_D(a_{1}^{\prime})=deg_D(b_{1}^{\prime})=2$, since $C$ is a basic cycle, $deg_D(a_1)\geqslant 3$ and $deg_D(c_1)\geqslant 3$. Hence, $a_{1}^{\prime}, b_{1}^{\prime} \in L_3(\mathcal{C})$, since $V(C)\subseteq \mathcal{C}$. Thus, $(c_1,a_{1}^{\prime})\in E(D)$ and $w(c_1)\neq 1$, since $a_{1}^{\prime}\in L_3(\mathcal{C})$, $deg_D(a_{1}^{\prime})=2$, $(a_{1}^{\prime},a_1)\in E(D)$ and $\mathcal{C}$ is strong.  By ($\star .1$) with $(a,b)=(c_1,a_{1}^{\prime})$, we have that $(b_{1}^{\prime},c_1)\in E(D)$. Hence, $N_{D}^{-}(b_{1}^{\prime})\subseteq \{ b_1\}$, since $deg_D(b_{1}^{\prime})=2$. This is a contradiction, since $b_{1}^{\prime}\in L_3(\mathcal{C})$ and $w(b_1)=1$.  \qed
\end{proof}

\begin{proposition}\label{Basic5Cycle-Equ}
Let $C$ be a basic $5$-cycle, then $C$ has the $\star$-property if and only if $|\mathcal{C} \cap V(C)|=3$ for each strong vertex cover $\mathcal{C}$ of $D$.
\end{proposition}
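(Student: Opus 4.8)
The plan is to prove the equivalence by contraposition in both directions, using Theorem~\ref{theorem-oct29} to convert statements about strong vertex covers containing a prescribed vertex set into statements about generating $\star$-semi-forests. The starting point is a reformulation. For any vertex cover $\mathcal{C}$ the set $V(C)\setminus\mathcal{C}$ is a stable set of the $5$-cycle $C$, hence has at most two elements, so $|\mathcal{C}\cap V(C)|\in\{3,4,5\}$ always; therefore ``$|\mathcal{C}\cap V(C)|=3$ for every strong $\mathcal{C}$'' is the same as ``for no $z\in V(C)$ is there a strong vertex cover $\mathcal{C}$ with $V(C)\setminus\{z\}\subseteq\mathcal{C}$''. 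Applying Theorem~\ref{theorem-oct29} to the induced subgraph $K_z:=G[V(C)\setminus\{z\}]$, a path on four vertices, this says precisely that none of the five paths $K_z$ has a generating $\star$-semi-forest. So it suffices to show: $C$ has the $\star$-property if and only if no $K_z$ has a generating $\star$-semi-forest.

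For the implication from the $\star$-property to the non-existence of such semi-forests I would argue by contradiction: assume $C$ has the $\star$-property but some strong vertex cover $\mathcal{C}$ has $|\mathcal{C}\cap V(C)|\geqslant 4$. Since $C$ is basic, at most two vertices of $C$ have degree $\geqslant 3$ in $G$ and they are nonadjacent, so $C$ has a degree-$2$ vertex all of whose neighbours lie on $C$; tracking whether the degree-$2$ vertices of $C$ lie in $L_1,L_2$ or $L_3$ of $\mathcal{C}$ one finds a degree-$2$ vertex of $C$ forced into $L_3(\mathcal{C})$, and the strong condition then produces a directed edge $(a,b)\in E(D)$ on $C$ with $a\in V^{+}$. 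Writing $C=(a',a,b,b',c,a')$ and invoking $(\star.1)$ gives $(a',a)\in E(D)$ and $w(a')=1$. One then ``chases'' around $C$: each degree-$2$ vertex of $C$ in $L_3(\mathcal{C})$ needs a $V^{+}$ in-neighbour on $C$, and combining this with $(\star.1)$ (and, where needed, the in-neighbour halves of $(\star.2)$ and $(\star.3)$, together with Remark~\ref{einC}) at the successive vertices $a'$, $c$, $b'$ one is eventually forced either to claim both orientations of one edge of $C$, or to conclude $w(a)=1$ for the vertex $a\in V^{+}$ we started from --- a contradiction. The source convention of Remark~\ref{rem-V-R-P} is used here to rule out a degree-$2$ vertex having both incident edges directed out of it yet lying in $V^{+}$. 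This is exactly the argument behind Lemma~\ref{NotStarCover}, carried out without assuming in advance which vertices of $C$ have large degree; it covers both $|\mathcal{C}\cap V(C)|=5$ and $|\mathcal{C}\cap V(C)|=4$ (where the unique excluded vertex $z$ has $N_D(z)\subseteq\mathcal{C}$, so one of its two nonneighbours on $C$ is a degree-$2$ vertex in $L_3(\mathcal{C})$).

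For the converse implication I again use contraposition: suppose $C$ fails the $\star$-property, so there is a directed edge $(a,b)\in E(D)$ on $C$ with $a\in V^{+}$ for which, writing $C=(a',a,b,b',c,a')$, at least one of $(\star.1)$, $(\star.2)$, $(\star.3)$ fails; the goal is to produce a strong vertex cover $\mathcal{C}$ with $|\mathcal{C}\cap V(C)|=4$. In each case one starts from a minimal (hence, by Remark~\ref{MinimalStrongProp}, strong) vertex cover $\mathcal{C}_0$, forms $\mathcal{C}_1=(\mathcal{C}_0\setminus V(C))\cup N_D(u)\cup N_{D}^{+}(a,u')$ for a suitable pair $u,u'\in V(C)$ chosen as in Lemma~\ref{lemma-sep11} and in the proof of Proposition~\ref{prop-unmixed}, checks via Remark~\ref{Compl-einC} that $\mathcal{C}_1$ is a vertex cover of $D$ excluding exactly the designated vertex of $C$, and then applies Proposition~\ref{GeneratinAStrongVC} with $A\subseteq\{a,u'\}\subseteq V^{+}$ to pass to a strong vertex cover $\mathcal{C}$ with $N_{D}^{+}(A)\subseteq\mathcal{C}\subseteq\mathcal{C}_1$, still excluding the designated vertex, so $|\mathcal{C}\cap V(C)|=4$. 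When $(\star.1)$ fails one excludes a neighbour of $a$ on $C$ (using $(a,a')\in E(D)$ or $w(a')>1$; in the first case the source convention forces $deg_D(a)\geqslant 3$ and basicness then pins down the remaining degrees); when the containment half of $(\star.2)$ or $(\star.3)$ fails one uses a witness in $N_D(a')$ or $N_D(b')$ lying outside the relevant out-neighbourhood, exactly as in the proof of Proposition~\ref{prop-unmixed}; and when an in-neighbour half fails one uses that in-neighbour to keep the designated vertex of $C$ out while pushing the other four in.

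The step I expect to be the main obstacle is the last one: the construction in the converse direction splits into a moderately large number of cases, according to which of $(\star.1)$--$(\star.3)$ fails and according to the orientations of the edges of $C$ incident to $a'$, $b'$ and $c$. The dihedral symmetry of $C$ reduces this, but in each surviving case one must verify both that the set produced is genuinely a vertex cover of $D$ --- this is where the external neighbours of the at most two degree-$\geqslant 3$ vertices of $C$ must be absorbed, again via Remark~\ref{Compl-einC} --- and that it excludes exactly one vertex of $C$. The chase in the forward direction is more uniform, but also needs attention in the sub-cases where the degree-$\geqslant 3$ vertices of $C$ lie among $\{a',b',c\}$ rather than being $a$ and $c$.
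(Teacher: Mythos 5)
Your backward direction is, in outline, exactly the paper's own strategy: start from a minimal (hence strong) cover, modify it to $(\mathcal{C}_0\setminus\{\cdot\})\cup N_D(\cdot)\cup N_D^{+}(\cdot)$ so that a designated vertex of $C$ is excluded, and pass to a strong cover via Proposition~\ref{GeneratinAStrongVC}, with Lemma~\ref{lemma-sep11} handling the weight clause of $(\star.1)$; but you leave the whole case analysis (which you correctly identify as the bulk of the work) unexecuted, so that half is a plan rather than a proof. The genuine gap, however, is in the forward direction, where the method you describe would not close.

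You claim that after locating a degree-two vertex $b_1\in L_3(\mathcal{C})$, its strong witness $(a_1,b_1)\in E(D)$ with $a_1\in V^{+}\cap(\mathcal{C}\setminus L_1(\mathcal{C}))$, and $(\star.1)$, a chase around $C$ using only $(\star.1)$, the in-neighbour halves of $(\star.2)$--$(\star.3)$ and Remark~\ref{einC} finishes both the case $|\mathcal{C}\cap V(C)|=5$ and the case $|\mathcal{C}\cap V(C)|=4$. It does not. Writing $C=(a_1',a_1,b_1,b_1',c_1,a_1')$, the excluded vertex in the $4$-case can only be $a_1'$ or $c_1$. If $a_1'\notin\mathcal{C}$ and $b_1'\notin L_3(\mathcal{C})$, the uncovered neighbour $y'$ of $b_1'$ lies off the cycle, and if $w(y')=1$ the in-neighbour halves say nothing about it; the only route to a contradiction is the containment half $N_D(b_1')\subseteq N_D(a_1')\cup N_D^{+}(a_1)$ of $(\star.3)$, which forces $y'\in\mathcal{C}$ (via Remark~\ref{einC} at $a_1'$) or $a_1\in L_1(\mathcal{C})$. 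Symmetrically, if $c_1\notin\mathcal{C}$ and $a_1\notin L_3(\mathcal{C})$, a weight-one in-neighbour $z'\notin\mathcal{C}$ of $a_1$ off the cycle is dealt with only by the containment half $N_D^{-}(a_1)\subseteq N_D(c_1)$ of $(\star.2)$. So the contradictions in the $4$-case are of the form ``$a_1\in L_1(\mathcal{C})$'' or ``a vertex forced into $\mathcal{C}$ is outside $\mathcal{C}$'', not ``both orientations of one edge'' or ``$w(a)=1$'', and they require the neighbourhood-containment clauses together with witnesses outside $C$ --- precisely the tools your sketch excludes. Moreover, even when $V(C)\subseteq\mathcal{C}$, the chase of Lemma~\ref{NotStarCover} needs $a_1'$ and $b_1'$ to lie in $L_3(\mathcal{C})$, i.e.\ it needs $\deg_D(a_1)\geqslant 3$ and $\deg_D(c_1)\geqslant 3$ (so basicness gives $\deg_D(a_1')=\deg_D(b_1')=2$), and these degree facts are themselves obtained from the same external-witness arguments with both halves of $(\star.2)$; ``carrying out the chase without assuming in advance which vertices have large degree'' is exactly the point at which it stalls, since a degree-$\geqslant 3$ vertex of $C$ with an uncovered outside neighbour is not in $L_3(\mathcal{C})$ and supplies no strong-cover witness. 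The paper's proof therefore first upgrades $|\mathcal{C}\cap V(C)|\geqslant 4$ to $V(C)\subseteq\mathcal{C}$ plus the degree conditions (this is where $(\star.2)$ and $(\star.3)$ are used in full), and only then applies the on-cycle chase; that intermediate step is missing from your proposal.
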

\begin{proof}
\noindent
$\Rightarrow )$
By contradiction, we suppose there is a strong vertex cover $\mathcal{C}$ such that $|\mathcal{C}\cap V(C)|\geqslant 4$. Thus, there is a path $L=(d_1,d_2,d_3,d_4)\subseteq C$ such that $V(L)\subseteq \mathcal{C}$. Then, $deg_{D}(d_2)=2$ or $deg_{D}(d_3)=2$, since $C$ is basic. We can suppose $deg_D (d_2)=2$, then $N_D (d_2)\subseteq \mathcal{C}$. This implies $b_1 :=d_2 \in L_3 (\mathcal{C})$. So, there is $(a_1,b_1)\in E(D)$ with $a_1\in \big( \mathcal{C} \setminus L_{1}(\mathcal{C})\big) \cap V^{+}$, since $\mathcal{C}$ is strong. Since, $N_D(b_1)\subseteq C$, we can set $C=(a_{1}^{\prime},a_1,b_1,b_{1}^{\prime},c_1,a_{1}^{\prime})$. Consequently, $\{ a_1,b_{1}^{\prime}\}=N_D(b_1)=N_D(d_2)=\{ d_1,d_3 \} \subseteq \mathcal{C}$. By ($\star .1$), $(a_{1}^{\prime},a_1)\in E(D)$ and $w(a_{1}^{\prime})=1$. If $b_1\in V^{+}$, then by Remark \ref{rem-V-R-P}, $b_1$ is not a sink. This implies, $(b_1,b_{1}^{\prime})\in E(D)$. Then, by ($\star .1$) with $(a,b)=(b_1,b_{1}^{\prime})$, $w(a_1)=1$. A contradiction, since $a_1\in V^{+}$. Hence, $w(b_1)=1$.

\noindent 
We prove $a_{1}^{\prime}\in \mathcal{C}$. By contradiction assume $a_{1}^{\prime}\not\in \mathcal{C}$, then $\{b_1,a_1,c_1,b_{1}^{\prime}\} \subseteq \mathcal{C}$, since $|\mathcal{C}\cap V(C)|\geqslant 4$. Suppose $b_{1}^{\prime} \in L_3(\mathcal{C})$, then there is $y\in \big( N_{D}^{-}(b_{1}^{\prime})\cap V^{+} \big) \setminus L_1(\mathcal{C})$. Then, by ($\star .3$) with $(a,b)=(a_1,b_1)$, $y\in N_{D}^{-}(a_{1}^{\prime})$, i.e. $(y,a_{1}^{\prime})\in E(D)$. Consequently, $y\in L_1(\mathcal{C})$, since $a_{1}^{\prime}\notin \mathcal{C}$. This is a contradiction. Hence, $b_{1}^{\prime} \notin L_3(\mathcal{C})$, i.e. there is $y^{\prime}\in N_D(b_{1}^{\prime})\setminus \mathcal{C}$, since $b_{1}^{\prime} \in \mathcal{C}$. By ($\star .3$), $y^{\prime} \in N_D(a_{1}^{\prime})\cup N_{D}^{+}(a_1)$. Furthermore, $a_{1}^{\prime} \notin \mathcal{C}$, then $N_D(a_{1}^{\prime})\subseteq \mathcal{C}$ and $y^{\prime} \notin N_D(a_{1}^{\prime})$, since $\mathcal{C}$ is a vertex cover and $y^{\prime}\notin \mathcal{C}$. This implies $y^{\prime} \in N_{D}^{+}(a_1)$, then $a_1 \in L_1(\mathcal{C})$, since $a_1\in \mathcal{C}$ and $y^{\prime} \notin \mathcal{C}$. A contradiction, since $a_1 \notin L_1(\mathcal{C})$. Therefore, $a_{1}^{\prime}\in \mathcal{C}$.

\noindent
Thus, $\{b_1,a_1,a_{1}^{\prime},b_{1}^{\prime}\} \subseteq \mathcal{C}$. Now, we prove $c_1\in \mathcal{C}$, $deg_D(a_1)\geqslant 3$ and $deg_D(c_1)\geqslant 3$.

\noindent
{\bf Case (1)} $a_1\in L_3(\mathcal{C})$. Consequently, there is $z\in N_{D}^{-}(a_1)\cap V^{+}$ such that $z\in \mathcal{C}  \setminus L_1(\mathcal{C})$. Then, $z\notin V(C)$, since $N_{D}^{-}(a_1)\cap V(C)=\{ a_{1}^{\prime} \}$ and $w(a_{1}^{\prime})=1$. By ($\star .2$), $z\in N_{D}^{-}(c_1)$. Thus, $(z,c_1)\in E(D)$. Consequently, $c_1\in \mathcal{C}$, $deg_{D}(a_1)\geqslant 3$ and $deg_{D}(c_1)\geqslant 3$, since $z\in \mathcal{C} \setminus L_1(\mathcal{C})$ and $z\in N_D(a_1)\cap N_D(c_1)$. 

\noindent
{\bf Case (2)} $a_1\notin L_3(\mathcal{C})$. This implies, there is  $z^{\prime}\in N_D(a_1)$ such that $z^{\prime}\notin \mathcal{C}$.  Then, $z^{\prime}\notin V(C)$, since $N_D(a_1)\cap V(C)=\{ a_{1}^{\prime}, b_1\} \subseteq \mathcal{C}$. Consequently, $z^{\prime}\in N_{D}^{-}(a_1)$, since $a_1\in \mathcal{C}\setminus L_1(\mathcal{C})$. By ($\star .2$), we have $z^{\prime}\in N_{D}^{-}(a_1)\subseteq N_D(c_1)$. Hence, $c_1\in \mathcal{C}$, $deg_D(a_1)\geqslant 3$ and $deg_D(c_1)\geqslant 3$, since $z^{\prime}\notin \mathcal{C}$ and $z^{\prime}\in N_D(a_1)\cap N_D(c_1)$. 

\noindent
This implies, $V(C)\subseteq \mathcal{C}$. A contradiction, by Lemma \ref{NotStarCover}, since $C$ has the $\star$-property. \medskip

\noindent
$\Leftarrow )$
Assume $C=(a^{\prime},a,b,b^{\prime},c,a^{\prime})$ with $(a,b)\in E(C)$ such that $w(a)\neq 1$. We take a minimal vertex cover $\mathcal{C}$ of $D$. We will prove ($\star .1$), ($\star .2$) and ($\star .3$). \smallskip 

\noindent
$\mathbf{(\star .1)}$ First we will prove $(a^{\prime},a)\in E(D)$. By contradiction, suppose $(a,a^{\prime})\in E(D)$. By Remark \ref{rem-V-R-P}, there is $y\in N_{D}^{-}(a)$, since $a\in V^{+}$. Thus, $y\notin V(C)$ and $deg_{D}(a)\geq 3$. Consequently, $deg_{D}(a^{\prime})=deg_{D}(b)=2$, since $C$ is basic. Also, $deg_{D}(b^{\prime})=2$ or $deg_{D}(c)=2$, since $C$ is basic. We can assume $deg_{D}(c)=2$, then $N_D(c)=\{ a^{\prime},b^{\prime}\}$. So, by Remark \ref{Compl-einC}, $\mathcal{C}_1=\big(\mathcal{C}\setminus \{y,c\}\big) \cup N_{D}(y,b)\cup N_{D}^{+}(a)$ is a vertex cover, since $\mathcal{C}$ is a vertex cover, $\{ a^{\prime},b^{\prime}\} \subseteq N_D(b)\cup N_{D}^{+}(a)\subseteq \mathcal{C}_1$. Since $deg_{D}(c)=2$, we have $c\not\in N_{D}^{}(y)$. Furthermore, $c\notin N_D(b)\cup N_{D}^{+}(a)$, since $C$ is induced. Then, $c\not\in \mathcal{C}_1$. Also, $N_D(b)=\{ b^{\prime},a\}$, implies $y\not\in \mathcal{C}_1$, since $y\not\in N_{D}^{+}(a)$. By Proposition \ref{GeneratinAStrongVC} there is a strong vertex cover $\mathcal{C}_{1}^{\prime}$ such that $N_{D}^{+}(a)\subseteq \mathcal{C}_{1}^{\prime}\subseteq \mathcal{C}_{1}$, since $a\in V^{+}$. Thus, $c,y\notin \mathcal{C}_{1}^{\prime}$, since $c,y\notin\mathcal{C}_1$. By Remark \ref{einC}, $a^{\prime},b^{\prime},a\in N_D (c)\cup N_D (y)\subseteq \mathcal{C}_{1}^{\prime}$. Furthermore, $b\in N_{D}^{+}(a)\subseteq \mathcal{C}_{1}^{\prime}$. Hence, $|\mathcal{C}_{1}^{\prime}\cap V(C)|=4$. A contradiction.

\noindent
Now, we prove $w(a^{\prime})=1$. By contradiction, assume $w(a^{\prime})\neq 1$. By the last argument, $(c,a^{\prime})\in E(D)$, since $(a^{\prime},a)\in E(D)$ and $a\in V^{+}$. A contradiction, by {\rm (a)} in Lemma \ref{lemma-sep11}. \smallskip 

\noindent
$\mathbf{(\star .2)}$ We will prove $N_{D}^{-}(a)\subseteq N_{D}(c)$. By contradiction, suppose there is $y\in N_{D}^{-}(a)\setminus N_{D}(c)$. Also, $N_{D}^{-}(a)\cap V(C)\subseteq \{a^{\prime}\} \subseteq N_{D}(c)$, since $b\in N_{D}^{+}(a)$. Hence, $y\notin V(C)$. By Remark \ref{Compl-einC}, $\mathcal{C}_2=\big( \mathcal{C} \setminus \{y,c\}\big) \cup N_{D}(y,c)\cup N_{D}^{+}(a)$ is a vertex cover. Furthermore, $y,c\notin \mathcal{C}_2$, since $y\in N_{D}^{-}(a)\setminus N_D(c)$ and $c\notin N_D(a,y)$. By Proposition \ref{GeneratinAStrongVC}, there is a strong vertex cover $\mathcal{C}_{2}^{\prime}$ such that $N_{D}^{+}(a)\subseteq \mathcal{C}_{2}^{\prime} \subseteq \mathcal{C}_2$, since $a\in V^{+}$. Thus, $y,c\notin \mathcal{C}_{2}^{\prime}$ since $y,c\notin \mathcal{C}_2$. By Remark \ref{einC}, $a,a^{\prime},b^{\prime}\in N_D(y,c)\subseteq \mathcal{C}_{2}^{\prime}$. Hence, $|\mathcal{C}_{2}^{\prime}\cap V(C)|=4$, since $b\in N_{D}^{+}(a)\subseteq \mathcal{C}_{2}^{\prime}$. A contradiction. 

\noindent
Now, we prove $N_{D}^{-}(a) \cap V^{+} \subseteq N_{D}^{-}(c)$. By contradiction, suppose there is $y\in N_{D}^{-}(a)\cap V^{+}\setminus N_{D}^{-}(c)$. By Remark \ref{Compl-einC}, $\mathcal{C}_3=(\mathcal{C}\setminus \{c\})\cup N_{D}(c)\cup N_{D}^{+}(a,y)$ is a vertex cover. Furthermore, $c\notin N_{D}^{+}(a,y)$, then $c\notin \mathcal{C}_3$. By Proposition \ref{GeneratinAStrongVC}, there is a strong vertex cover $\mathcal{C}_{3}^{\prime}$ such that $N_{D}^{+}(a,y)\subseteq \mathcal{C}_{3}^{\prime}\subseteq \mathcal{C}_3$ since $\{ a,y\} \subseteq V^{+}$. So, $c\notin \mathcal{C}_{3}^{\prime}$, since $c\notin \mathcal{C}_3$. Thus, by Remark \ref{einC} $a^{\prime},b^{\prime}\in N_D (c)\subseteq \mathcal{C}_{3}^{\prime}$. Also, $a,b\in N_{D}^{+}(a,y)\subseteq \mathcal{C}_{3}^{\prime}$. Hence, $|\mathcal{C}^{\prime}_{3}\cap V(C)|= 4$, a contradiction. \smallskip

\noindent
$\mathbf{(\star .3)}$ We prove $N_D(b^{\prime})\subseteq N_D(a^{\prime})\cup N_{D}^{+}(a)$. By contradiction, we suppose there is $y\in N_D(b^{\prime})\setminus \big( N_D(a^{\prime})\cup N_{D}^{+}(a)\big)$. Thus, $y\notin C$, since $N_D (b^{\prime})\cap V(C)=\{ c,b\} \subseteq N_D (a^{\prime}) \cup N_{D}^{+}(a)$. By Remark \ref{Compl-einC}, $\mathcal{C}_4 =\big( \mathcal{C}\setminus \{ y,a^{\prime}\})\big) \cup N_D(y,a^{\prime})\cup N_{D}^{+} (a)$. Furthermore, $y\notin \mathcal{C}_4$, since $y\notin N_D(a^{\prime})\cup N_{D}^{+}(a)$. By ($\star .1$), $(a^{\prime},a)\in E(D)$, then $a^{\prime} \notin \mathcal{C}_4$, since $a^{\prime} \notin N_D(y) \cup N_{D}^{+}(a)$. By Proposition \ref{GeneratinAStrongVC}, there is a strong vertex cover $\mathcal{C}_{4}^{\prime}$ such that $N_{D}^{+}(a)\subseteq \mathcal{C}_{4}^{\prime} \subseteq \mathcal{C}_4$, since $a\in V^{+}$. So, $y,a^{\prime} \notin \mathcal{C}_{4}^{\prime}$, since $y,a^{\prime} \notin \mathcal{C}_4$. Thus, by Remark \ref{einC} $b^{\prime},a,c\in N_D(y)\cup N_D(a^{\prime})\subseteq \mathcal{C}_{4}^{\prime}$. Also, $b\in N_{D}^{+} (a) \subseteq \mathcal{C}_{4}^{\prime}$. Hence, $|\mathcal{C}_{4}^{\prime} \cap V(C)|=4$, a contradiction. \smallskip

\noindent
Finally, we prove $N_{D}^{-}(b^{\prime})\cap V^{+} \subseteq N_{D}^{-}(a^{\prime})$.  By contradiction, we suppose there is $y\in \big( N_{D}^{-}(b^{\prime})\cap V^{+}\big) \setminus N_{D}^{-}(a^{\prime})$. By $(\star .1)$, $a^{\prime} \in N_{D}^{-}(a)$. Furthermore, by {\rm (a)} in Lemma \ref{lemma-sep11}, $y\neq b$, since $y\in V^{+}$. If $y=c$, then $(c,b^{\prime})\in E(D)$. Thus, by $(\star .1)$, with the edge $(a^{\prime},c)\in E(D)$. A contradiction by {\rm (b)} in Lemma \ref{lemma-sep11}, since $c=y\in V^{+}$. Hence, $y\notin V(C)$. By Remark \ref{Compl-einC}, $\mathcal{C}_5=\big( \mathcal{C}\setminus \{ a^{\prime}\}\big) \cup N_{D}(a^{\prime})\cup N_{D}^{+}(y,a)$ is a vertex cover. By Remark \ref{Compl-einC}, $a^{\prime} \notin N_{D}^{+}(y,a)$, since $(a^{\prime},a)\in E(D)$ and $y\notin N_{D}^{-}(a^{\prime})$. Consequently, $a^{\prime} \notin \mathcal{C}_5$. By Proposition \ref{GeneratinAStrongVC}, there is a strong vertex cover $\mathcal{C}_{5}^{\prime}$ such that $N_{D}^{+} (a,y)\subseteq \mathcal{C}_{5}^{\prime} \subseteq \mathcal{C}_5$, since $\{ a,y\} \subseteq V^{+}$. So, $a^{\prime} \notin \mathcal{C}_{5}^{\prime}$, since $a^{\prime} \notin \mathcal{C}_{5}$. Then, by Remark \ref{einC} $a,c\in N_{D}(a^{\prime}) \subseteq \mathcal{C}_{5}^{\prime}$. Furthermore, $b,b^{\prime} \in N_{D}^{+}(a,y) \subseteq \mathcal{C}_{5}^{\prime}$. Hence, $|\mathcal{C}_{5}^{\prime} \cap V(C)|=4$, a contradiction. \qed \smallskip
\end{proof}

\begin{lemma}\label{lemma-08ene}
Let $\mathcal{C}$ be a vertex cover of $D$ where $G$ is an $SCQ$ graph. Hence, $|\mathcal{C}|=\tau(G)$ if and only if $|\mathcal{C}\cap V(K)|=|V(K)|-1$, $|\mathcal{C}\cap V(C)|=3$ and $|\mathcal{C}\cap e|=1$ for each $K\in S_G$, $C\in C_G$ and $e\in Q_G$, respectively.
\end{lemma}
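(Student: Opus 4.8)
The plan is to deduce both implications from the single claim that the partition $\{V(H)\mid H\in S_G\cup C_G\cup Q_G\}$ of $V(G)$ is a $\tau$-reduction of $G$, that is,
\[
\tau(G)=\sum_{K\in S_G}\bigl(|V(K)|-1\bigr)+3\,|C_G|+|Q_G|,
\]
using that $\tau(K)=|V(K)|-1$ for a simplex $K$ (a complete graph), $\tau(C)=3$ for an induced $5$-cycle $C$, and $\tau(e)=1$ for an edge $e$. Granting this equality the lemma is pure bookkeeping: since each piece $H$ is induced, $\mathcal C\cap V(H)$ is a vertex cover of $H$, so $|\mathcal C\cap V(K)|\geqslant|V(K)|-1$, $|\mathcal C\cap V(C)|\geqslant3$, $|\mathcal C\cap e|\geqslant1$, and hence $|\mathcal C|=\sum_H|\mathcal C\cap V(H)|\geqslant\sum_H\tau(H)=\tau(G)$. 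If $|\mathcal C|=\tau(G)$ every one of these inequalities is an equality, which is exactly the right-hand side of the lemma; conversely, if all the per-piece equalities hold then $|\mathcal C|=\sum_H\tau(H)=\tau(G)$.

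So the work is to prove the displayed equality. The inequality $\tau(G)\geqslant\sum_H\tau(H)$ is the computation just made applied to a minimum vertex cover. For the reverse inequality I would exhibit a vertex cover of size $\sum_H\tau(H)$, equivalently an independent set $S$ of $G$ with $|S\cap V(K)|=1$, $|S\cap V(C)|=2$, $|S\cap e|=1$ for all $K\in S_G$, $C\in C_G$, $e\in Q_G$ (the bounds $\leqslant$ being automatic, since a complete graph on $m$ vertices and an induced $5$-cycle have independence numbers $1$ and $2$). For each simplex $K$ put into $S$ a simplicial vertex $v_K$ with $N_G[v_K]=V(K)$; then $v_K$ has no neighbour outside $V(K)$. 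For each basic $5$-cycle $C$, the vertices of $C$ of degree $\geqslant3$ in $G$ form an independent set of $C\cong C_5$ (that is what ``basic'' means), so at least three vertices of $C$ have degree exactly $2$, hence no neighbour outside $V(C)$; among any three vertices of a $5$-cycle two are non-adjacent, so put two non-adjacent degree-$2$ vertices of $C$ into $S$.

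The delicate step is the choice of one endpoint of each $e\in Q_G$. I would let $\mathcal C_0$ be a minimal vertex cover of the induced subgraph $G[V(Q_G)]$ and show $|\mathcal C_0\cap e|=1$ for every $e=\{b,b'\}\in Q_G$: if $b,b'\in\mathcal C_0$ then minimality produces $a\in N_{G[V(Q_G)]}(b)\setminus\mathcal C_0$ and $a'\in N_{G[V(Q_G)]}(b')\setminus\mathcal C_0$, and applying the property {\bf(P)} of $e$ (available because $Q_G$ is a matching with {\bf(P)}) to the edges $\{a,b\},\{a',b'\}$ forces $\{a,a'\}\in E(G)$; since $a,a'\notin\mathcal C_0$ this contradicts that $\mathcal C_0$ is a vertex cover of $G[V(Q_G)]$ (and the degenerate case $a=a'$ is excluded, as a vertex is never adjacent to itself). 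Thus $S_Q:=V(Q_G)\setminus\mathcal C_0$ is an independent set of $G[V(Q_G)]$, hence of $G$, meeting each $e\in Q_G$ in exactly one vertex. Finally $S:=\{v_K\mid K\in S_G\}\cup(\text{the chosen cycle vertices})\cup S_Q$ is independent, because the simplicial vertices and the chosen degree-$2$ cycle vertices have all their neighbours inside their own piece, the pieces are pairwise disjoint and disjoint from $V(Q_G)$, and $S_Q$ is independent; and $S$ has the prescribed intersection with each piece. Then $V(G)\setminus S$ is the required vertex cover and the displayed equality follows.

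I expect the $Q_G$-step to be the main obstacle: producing a single independent set that is a transversal of the matching $Q_G$, for which property {\bf(P)} is precisely what is needed (including the care about a common neighbour of $b$ and $b'$). The rest — the per-piece cardinality bounds and the ``equality in a sum of inequalities'' argument delivering both directions of the iff — is routine. Note also that if an earlier result already records that $SCQ$ graphs are well-covered, or directly that $\{V(H)\mid H\in S_G\cup C_G\cup Q_G\}$ is a $\tau$-reduction, then the displayed equality is immediate and the proof collapses to the first paragraph.
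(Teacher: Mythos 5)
Your proposal is correct and follows essentially the same route as the paper: reduce the equivalence to the identity $\tau(G)=\sum_{K\in S_G}(|V(K)|-1)+3|C_G|+|Q_G|$ via the per-piece bounds, and establish it by choosing a simplicial vertex in each simplex, two non-adjacent degree-$2$ vertices in each basic $5$-cycle, and a transversal of $Q_G$ obtained from property \textbf{(P)}. The only cosmetic difference is that you extract the $Q_G$-transversal from a minimal vertex cover of $G[V(Q_G)]$, whereas the paper uses a maximal stable set contained in $V(Q_G)$ — complementary formulations of the same \textbf{(P)}-argument.
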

\begin{proof}
We set $\mathcal{C}$ a vertex cover of $D$, $K\in S_G$, $C\in C_G$ and $e\in Q_G$. Then, there are $y\in V(G)$ and $a,a^{\prime}\in V(C)$ such that $K=G[N_G[y]]$, $deg_G(a)=deg_G(a^{\prime})=2$ and $\{ a,a^{\prime}\} \notin E(G)$. We set $A_K:=V(K)\setminus \{ y\}$ and $B_C:=V(C)\setminus \{ a,a^{\prime}\}$. Also, $\mathcal{C}\cap V(K)$ is a vertex cover of $K$, so $|\mathcal{C}\cap V(K)|\geqslant\tau(K)=|V(K)|-1$. Similarly, $|\mathcal{C}\cap V(C)|\geqslant\tau(C)=3$ and $|\mathcal{C}\cap e|\geqslant\tau(e)=1$. Thus, 
\begin{equation}\label{tau-SCQ}
|\mathcal{C}|=\sum\limits_{K\in S_G} |\mathcal{C}\cap V(K)|+\sum\limits_{C\in C_G} |\mathcal{C}\cap V(C)|+\sum\limits_{e\in Q_G} |\mathcal{C}\cap e|\geqslant \sum\limits_{K\in S_G} \big( |V(K)|-1\big)+3|C_G|+|Q_G|,
\end{equation} 
since $\mathcal{H}=\{ V(H)\mid H\in S_G\cup C_G\cup Q_G\}$ is a partition of $V(G)$. Now, we take a maximal stable set $S$ contained in $V(Q_G):=\{ x\in V(G)\mid x\in e \ {\rm and} \ e\in Q_G\}$. Then, $|S\cap e|\leqslant 1$ for each $e\in Q_G$, since $S$ is stable. If $S\cap e=\emptyset$ for some $e=\{ x_1,x_2\} \in Q_G$, then there are $y_1,y_2\in S$ such that $\{ x_1,y_1\}$, $\{ x_2,y_2\} \in E(G)$, since $S$ is maximal. But $Q_G$ satisfies the property {\bf (P)}, then $\{ y_1,y_2\}\in E(G)$. A contradiction, since $S$ is stable. Hence, $|S\cap e|=1$ for each $e\in Q_G$. Consequently, $|S|=|Q_G|$ and $|S^{\prime}|=|Q_G|$, where $S^{\prime}=V(Q_G)\setminus S$. Now, we take 
\begin{center}
$\mathcal{C}(S^{\prime})= \Big( \bigcup\limits_{K\in S_G} A_K \Big) \bigcup \Big( \bigcup\limits_{C\in C_G} B_C \Big) \bigcup S^{\prime}$.
\end{center}
We prove $\mathcal{C}(S^{\prime})$ is a vertex cover of $D$. By contradiction, suppose there is $\hat{e}\in E(G)$ such that $\hat{e}\cap \mathcal{C}(S^{\prime})=\emptyset$. We set $z\in \hat{e}$, then $\hat{e}=\{ z,z^{\prime}\}$. If $z\in V(\tilde{K})$ for some $\tilde{K}\in S_G$, \linebreak then $\tilde{K}=G[N_G[z]]$, since $A_{\tilde{K}}\subseteq \mathcal{C}(S^{\prime})$ and $z\notin \mathcal{C}(S^{\prime})$. So, $z^{\prime}\in N_G(z)\subseteq 
\tilde{K}\setminus \{ z\}=A_{\tilde{K}}\subseteq \mathcal{C}(S^{\prime})$. A contradiction, since $\hat{e}\cap \mathcal{C}(S^{\prime})=\emptyset$. Now, if $z\in V(\tilde{C})$ for some $\tilde{C}\in C_G$, then $z\notin B_{\tilde{C}}$. Thus, $deg_G(z)=2$ implying $z^{\prime}\in B_{\tilde{C}}\subseteq \mathcal{C}(S^{\prime})$, since $\{ z,z^{\prime}\}\in E(G)$. A contradiction. Then, $\hat{e}\subseteq V(Q_G)$, since $\mathcal{H}$ is a partition of $V(G)$. Also, $\hat{e}\cap S^{\prime}=\emptyset$, this implies $\hat{e}\subseteq V(Q_G)\setminus S^{\prime}=S$. But $S$ is stable. This is a contradiction. Hence, $\mathcal{C}(S^{\prime})$ is a vertex cover of $D$. Furthermore,
\begin{center}
$|\mathcal{C}(S^{\prime})|=\sum\limits_{K\in S_G} |A_K|+\sum\limits_{C\in C_G} |B_C|+|S^{\prime}|=\sum\limits_{K\in S_G} \big( |V(K)|-1\big)+3|C_G|+|Q_G|$.
\end{center}
Thus, $\tau(G)=\sum_{K\in S_G} \big( |V(K)|-1\big)+3|C_G|+|Q_G|$. Therefore, by (\ref{tau-SCQ}), $|\mathcal{C}|=\tau(G)$ if and only if $|\mathcal{C}\cap V(K)|=|K|-1$, $|\mathcal{C}\cap V(C)|=3$ and $|\mathcal{C}\cap e|=1$ for each $K\in S_G$, $C\in C_G$ and $e\in Q_G$, respectively.
\qed
\end{proof}

\begin{theorem}\label{SCQ-char}
Let $D=(G,\mathcal{O},w)$ be a weighted oriented graph where $G$ is an $SCQ$ graph. Hence, $I(D)$ is unmixed if and only if $D$ satisfies the following conditions:
\begin{enumerate}[noitemsep]
\item[{\rm (a)}] Each basic $5$-cycle of $G$ has the $\star$-property.
\item[{\rm (b)}] Each simplex of $D$ has no generating $\star$-semi-forests.
\item[{\rm (c)}] $N_D(b)\subseteq N_{D}^{+}(a)$ when $a\in V^{+}$, $\{ b,b^{\prime} \} \in Q_G$ and $b^{\prime} \in N_{D}^{+}(a)$.
\end{enumerate}
\end{theorem}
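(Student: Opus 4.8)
The plan is to obtain the characterization by assembling the results already proved, exploiting that $G\in SCQ$ forces $\{V(H)\mid H\in S_G\cup C_G\cup Q_G\}$ to be a partition of $V(G)$ (Definition \ref{CondSCQ}) with $Q_G$ a matching having the property {\bf (P)}. The pivot is Lemma \ref{lemma-08ene}: a vertex cover $\mathcal{C}$ of $D$ satisfies $|\mathcal{C}|=\tau(G)$ if and only if $|\mathcal{C}\cap V(K)|=|V(K)|-1$, $|\mathcal{C}\cap V(C)|=3$ and $|\mathcal{C}\cap e|=1$ for every $K\in S_G$, $C\in C_G$ and $e\in Q_G$. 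Together with Theorem \ref{theorem42} (unmixedness $\Leftrightarrow$ all strong vertex covers have equal cardinality) and Remark \ref{MinimalStrongProp} (if $I(D)$ is unmixed, then every strong vertex cover has cardinality exactly $\tau(G)$), this reduces the theorem to matching each of the three tightness equalities against conditions (a), (b), (c).

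For $(\Leftarrow)$ I would assume (a), (b), (c), fix an arbitrary strong vertex cover $\mathcal{C}$, and check the three equalities. For $K\in S_G$: $\mathcal{C}\cap V(K)$ is a vertex cover of the complete graph $K$, hence $|\mathcal{C}\cap V(K)|\geqslant|V(K)|-1$; and equality must hold, for otherwise $V(K)\subseteq\mathcal{C}$ and Theorem \ref{theorem-oct29} would yield a generating $\star$-semi-forest of the induced weighted oriented subgraph $K$, against (b). For $C\in C_G$: condition (a) plus Proposition \ref{Basic5Cycle-Equ} give $|\mathcal{C}\cap V(C)|=3$. For $e=\{b,b'\}\in Q_G$: since $Q_G$ is a matching with the property {\bf (P)}, $e$ has the property {\bf (P)}, and (c) is precisely the remaining clause of item (2) of Proposition \ref{prop-unmixed}, so $|\mathcal{C}\cap e|=1$. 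Lemma \ref{lemma-08ene} then gives $|\mathcal{C}|=\tau(G)$; as $\mathcal{C}$ was arbitrary, all strong vertex covers have the same cardinality and $I(D)$ is unmixed by Theorem \ref{theorem42}.

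For $(\Rightarrow)$ I would start from $I(D)$ unmixed, so by Remark \ref{MinimalStrongProp} every strong vertex cover $\mathcal{C}$ has $|\mathcal{C}|=\tau(G)$, and hence, by Lemma \ref{lemma-08ene}, satisfies $|\mathcal{C}\cap V(K)|=|V(K)|-1$, $|\mathcal{C}\cap V(C)|=3$, $|\mathcal{C}\cap e|=1$ for all $K,C,e$. Proposition \ref{Basic5Cycle-Equ} then gives that every $C\in C_G$ has the $\star$-property, i.e.\ (a). If some $K\in S_G$ had a generating $\star$-semi-forest, Theorem \ref{theorem-oct29} would produce a strong vertex cover $\mathcal{C}$ with $V(K)\subseteq\mathcal{C}$, contradicting $|\mathcal{C}\cap V(K)|=|V(K)|-1$; this gives (b). Finally, for each $e\in Q_G$ the equality $|\mathcal{C}\cap e|=1$ for all strong $\mathcal{C}$ lets me invoke Proposition \ref{prop-unmixed}, whose item (2) contains condition (c).

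The proof is essentially bookkeeping, so the hard part will not be a deep step but the matching of quantifiers: Propositions \ref{prop-unmixed} and \ref{Basic5Cycle-Equ} and Theorem \ref{theorem-oct29} each speak about a fixed subgraph and ``every strong vertex cover'', while Lemma \ref{lemma-08ene} controls one cover at a time, so the bridge must be Remark \ref{MinimalStrongProp} on one side and Theorem \ref{theorem42} on the other. I would also be careful that ``simplex of $D$'' in (b) means the induced weighted oriented subgraph carried by $V(K)$ for $K\in S_G$, so that Theorem \ref{theorem-oct29} applies without change, and that each $e\in Q_G$ is genuinely an edge with the property {\bf (P)} so that the hypotheses of Proposition \ref{prop-unmixed} are met.
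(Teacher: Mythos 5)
Your proposal is correct and follows essentially the same route as the paper: both directions are assembled from Remark \ref{MinimalStrongProp}, Lemma \ref{lemma-08ene}, Theorem \ref{theorem-oct29}, Proposition \ref{Basic5Cycle-Equ}, Proposition \ref{prop-unmixed} and Theorem \ref{theorem42}, exactly as in the paper's proof. The quantifier-matching point you flag is handled the same way there, so there is nothing to add.
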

\begin{proof}
$\Rightarrow )$
We take a strong vertex cover $\mathcal{C}$ of $D$, then by Remark \ref{MinimalStrongProp}, $|\mathcal{C}|=\tau (G)$. Consequently, by Lemma \ref{lemma-08ene}, $|\mathcal{C}\cap V(K)|=|V(K)|-1$, $|\mathcal{C}\cap V(C)|=3$ and $|\mathcal{C}\cap e|=1$ for each $K\in S_G$, $C\in C_G$ and $e\in Q_G$. Thus, $V(K)\not\subseteq \mathcal{C}$. Consequently, by Theorem \ref{theorem-oct29}, $D$ satisfies {\rm (b)}. Furthermore, by Propositions \ref{Basic5Cycle-Equ} and \ref{prop-unmixed}, $D$ satisfies {\rm (a)} and {\rm (c)}. \medskip

\noindent
$\Leftarrow )$
Let $\mathcal{C}$ be a strong vertex cover of $D$. By {\rm (a)} and Proposition \ref{Basic5Cycle-Equ}, we have $|\mathcal{C}\cap V(C)|=3$ for each $C\in C_G$. Furthermore, by {\rm (b)} and Theorem \ref{theorem-oct29}, $V(K)\not\subseteq \mathcal{C}$ for each $K\in S_G$. Consequently, $|V(K)|>|\mathcal{C} \cap V(K)|\geqslant \tau (K)=|V(K)|-1$. So, $|\mathcal{C}\cap V(K)|=|V(K)|-1$. Now, if $e\in Q_G$, then $e$ has the property {\bf (P)}, since $Q_G$ has the property {\bf (P)}. Thus, by {\rm (c)} and Proposition \ref{prop-unmixed}, $|\mathcal{C} \cap e|=1$. Hence, by Lemma \ref{lemma-08ene}, $|\mathcal{C}|=\tau (G)$. Therefore $I(D)$ is unmixed, by $(2)$ in Theorem \ref{theorem42}. \qed
\end{proof}

\begin{corollary}\label{Simp-Chor-Unm}
Let $D=(G,\mathcal{O},w)$ be a weighted oriented graph where $G$ is a simplicial or chordal graph. Hence, $I(D)$ is unmixed if and only if $D$ satisfies the following conditions:
\begin{enumerate}[noitemsep]
\item[{\rm (a)}] Each vertex is in exactly one simplex of $D$.
\item[{\rm (b)}] Each simplex of $D$ has not a generating $\star$-semi-forest.
\end{enumerate} 
\end{corollary}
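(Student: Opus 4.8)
\section*{Proof proposal for Corollary \ref{Simp-Chor-Unm}}

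The plan is to deduce the corollary from Theorem \ref{SCQ-char} by observing that, in the simplicial or chordal setting, ``$G$ is an $SCQ$ graph'' collapses to ``$G$ is well--covered with $C_G=Q_G=\emptyset$'', so that conditions (a) and (c) of Theorem \ref{SCQ-char} are vacuous and only its condition (b) survives. Throughout I read ``simplex of $D$'' as the induced weighted oriented subgraph of $D$ supported on the vertex set of a simplex of the underlying graph $G$; since simplexes are defined via $G$ alone, a simplex of $D$ is a simplex of $G$ carrying the inherited weights.

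First I would treat the forward implication. Assume $I(D)$ is unmixed. By {\rm (3)} of Theorem \ref{theorem42}, $I(G)$ is unmixed, hence by Remark \ref{1star} the graph $G$ is well--covered. Since $G$ is simplicial or chordal, Theorem \ref{T1-T2Prisner} gives that every vertex of $G$ (equivalently of $D$) lies in exactly one simplex, which is condition (a). Moreover, by Remark \ref{rem-nov13}, $G$ is an $SCQ$ graph with $C_G=Q_G=\emptyset$. Applying Theorem \ref{SCQ-char} to this $SCQ$ structure, its condition (a) (concerning basic $5$-cycles) and its condition (c) (concerning edges of $Q_G$) hold vacuously because $C_G=Q_G=\emptyset$, while its condition (b), namely that each simplex of $D$ has no generating $\star$-semi-forest, is precisely condition (b) of the corollary. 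This yields (a) and (b).

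For the converse, assume (a) and (b). By (a) together with $G$ being simplicial or chordal, Theorem \ref{T1-T2Prisner} shows $G$ is well--covered, so by Remark \ref{rem-nov13} again $G$ is an $SCQ$ graph with $C_G=Q_G=\emptyset$. Conditions (a) and (c) of Theorem \ref{SCQ-char} are then vacuous, and condition (b) of that theorem coincides with condition (b) of the corollary, so Theorem \ref{SCQ-char} gives that $I(D)$ is unmixed. The argument is essentially bookkeeping and I do not expect a genuine obstacle; the only points needing a line of care are the identification of a ``simplex of $D$'' with a ``simplex of $G$'' (so that Theorems \ref{T1-T2Prisner} and \ref{SCQ-char} speak about the same objects) and the explicit remark that, when $C_G=Q_G=\emptyset$, conditions (a) and (c) of Theorem \ref{SCQ-char} are empty statements rather than hidden constraints.
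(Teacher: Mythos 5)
Your proposal is correct and follows essentially the same route as the paper: both directions reduce to Theorem \ref{SCQ-char} after noting that (a) forces $G$ to be an $SCQ$ graph with $C_G=Q_G=\emptyset$, making conditions (a) and (c) of that theorem vacuous. The only cosmetic difference is that in the converse the paper passes from condition (a) directly to the partition $\{V(H)\mid H\in S_G\}$ of $V(G)$ without first invoking Theorem \ref{T1-T2Prisner}, but this changes nothing of substance.
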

\begin{proof}
$\Rightarrow )$ By $(3)$ in Theorem \ref{theorem42} and Remark \ref{1star}, $G$ is well-covered. Thus, by Theorem \ref{T1-T2Prisner}, $G$ sa\-tis\-fies {\rm (a)}. Furthermore, by Remark \ref{rem-nov13}, $G$ is an $SCQ$ graph with $C_G=Q_G=\emptyset$. Hence, by Theorem \ref{SCQ-char}, $D$ satifies {\rm (b)}. \medskip

\noindent
$\Leftarrow )$ By {\rm (a)}, $\{ V(H)\mid H\in S_G\}$ is a partition of $V(G)$. Hence, $G$ is an $SCQ$ graph with $C_G=\emptyset$ and $Q_{G}=\emptyset$. Therefore, by {\rm (b)} and Theorem \ref{SCQ-char}, $I(D)$ is unmixed. \qed
\end{proof}

\section{Unmixedness of weighted oriented graphs without some small cycles}
Let $D=(G,\mathcal{O},w)$ be a weighted oriented graph. In this Section, we study and cha\-rac\-te\-ri\-ze the unmixed property of $I(D)$ when $G$ has no $3$- or $5$- cycles (Theorem \ref{theorem-oct31}), or $G$ is a graph without $4$- or $5$-cycles (Theorem \ref{No4,5Cyc-Unmix}), or $girth(G)\geqslant 5$ (Theorem \ref{No3,4,5Cyc-Unmix}). In other words, in this Section, we characterize the unmixed property of $I(D)$ when $G$ has at most one of the following types of cycles: $3$-cycles, $4$-cycles and $5$-cycles.

\begin{proposition}\label{prop-oct31}
If for each $(y,x)\in E(D)$ with $y\in V^{+}$, we have that $N_D(y^{\prime})\subseteq N_{D}^{+}(y)$ for some $y^{\prime}\in N_D(x)\setminus y$, then $L_3(\mathcal{C})=\emptyset$ for each strong vertex cover $\mathcal{C}$ of $D$.
\end{proposition}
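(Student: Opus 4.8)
The plan is to argue by contradiction. Suppose $\mathcal{C}$ is a strong vertex cover of $D$ with $L_3(\mathcal{C})\neq\emptyset$, and pick $x\in L_3(\mathcal{C})$. By Definition \ref{strong-cover-defn}, there is an edge $(y,x)\in E(D)$ with $y\in L_2(\mathcal{C})\cup L_3(\mathcal{C})=\mathcal{C}\setminus L_1(\mathcal{C})$ and $y\in V^{+}$. The hypothesis, applied to this edge $(y,x)$, yields a vertex $y^{\prime}\in N_D(x)\setminus\{y\}$ with $N_D(y^{\prime})\subseteq N_{D}^{+}(y)$. The strategy now is to extract a contradiction from the fact that $y\notin L_1(\mathcal{C})$, i.e. $N_{D}^{+}(y)\cap \mathcal{C}^{c}=\emptyset$, equivalently $N_{D}^{+}(y)\subseteq \mathcal{C}$.

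First I would observe that since $x\in L_3(\mathcal{C})$, Remark \ref{VertexL3} gives $N_D[x]\subseteq\mathcal{C}$, so in particular $y^{\prime}\in\mathcal{C}$. The key step is to show $y^{\prime}\in L_3(\mathcal{C})$ as well, by checking $N_D[y^{\prime}]\subseteq\mathcal{C}$: indeed $N_D(y^{\prime})\subseteq N_{D}^{+}(y)\subseteq\mathcal{C}$ because $y\notin L_1(\mathcal{C})$, and $y^{\prime}\in\mathcal{C}$ as just noted, so $N_D[y^{\prime}]\subseteq\mathcal{C}$ and hence $y^{\prime}\in L_3(\mathcal{C})$ by Remark \ref{VertexL3}. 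Now apply the strong-cover condition to $y^{\prime}\in L_3(\mathcal{C})$: there is $(z,y^{\prime})\in E(D)$ with $z\in\mathcal{C}\setminus L_1(\mathcal{C})$ and $z\in V^{+}$. But $z\in N_D(y^{\prime})\subseteq N_{D}^{+}(y)$, so $(y,z)\in E(D)$; since $z\in V^{+}$ we may apply the hypothesis to the edge $(y,z)$ — though the cleaner route is to use that $z\notin L_1(\mathcal{C})$ means $N_{D}^{+}(z)\subseteq\mathcal{C}$ while we need a vertex pushed outside $\mathcal{C}$.

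The anticipated main obstacle is closing the loop: producing a vertex that is forced to lie in $\mathcal{C}^{c}$ while also being forced into $\mathcal{C}$. I expect the right device is to iterate the above: having found $z\in V^{+}$ with $(z,y^{\prime})\in E(D)$ and $z\in\mathcal{C}\setminus L_1(\mathcal{C})$, apply the hypothesis to $(z,y^{\prime})$ to get $z^{\prime}\in N_D(y^{\prime})\setminus\{z\}$ with $N_D(z^{\prime})\subseteq N_{D}^{+}(z)\subseteq\mathcal{C}$; since also $z^{\prime}\in N_D(y^{\prime})\subseteq\mathcal{C}$ (as $N_D(y^{\prime})\subseteq N_{D}^{+}(y)\subseteq\mathcal{C}$), we again get $z^{\prime}\in L_3(\mathcal{C})$, and the process of generating new $V^{+}$-vertices feeding into $L_3(\mathcal{C})$ never terminates on the finite vertex set $V(D)$ unless it eventually revisits a vertex, at which point one obtains an oriented cycle all of whose vertices lie in $V^{+}$ and in $\mathcal{C}\setminus L_1(\mathcal{C})$, contradicting Remark \ref{rem-V-R-P} (a source has weight $1$, yet a vertex on such a cycle with all in-edges forced would have to be a source in the relevant sub-configuration). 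I would phrase the finiteness argument carefully — most likely by building a maximal oriented path of $V^{+}$-vertices in $\mathcal{C}\setminus L_1(\mathcal{C})$ as in the proof of Theorem \ref{theorem-oct29}, Case (2), and deriving the contradiction from its terminal vertex.
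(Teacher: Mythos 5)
Your setup coincides with the paper's: from $x\in L_3(\mathcal{C})$ you get $(y,x)\in E(D)$ with $y\in\big(\mathcal{C}\setminus L_1(\mathcal{C})\big)\cap V^{+}$, the hypothesis produces $y'\in N_D(x)\setminus y$ with $N_D(y')\subseteq N_{D}^{+}(y)\subseteq\mathcal{C}$, hence $y'\in L_3(\mathcal{C})$, the strong-cover condition gives $(z,y')\in E(D)$ with $z\in\big(\mathcal{C}\setminus L_1(\mathcal{C})\big)\cap V^{+}$ and $z\in N_D(y')\subseteq N_{D}^{+}(y)$, and a second application of the hypothesis to $(z,y')$ gives $z'\in N_D(y')\setminus z$ with $N_D(z')\subseteq N_{D}^{+}(z)$. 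At this point you have every fact needed, but you stop one line short of the contradiction, which is already on the page: since $z'\in N_D(y')\subseteq N_{D}^{+}(y)$, the vertex $y$ lies in $N_D(z')$, hence $y\in N_D(z')\subseteq N_{D}^{+}(z)$, i.e.\ $(z,y)\in E(D)$; but $z\in N_{D}^{+}(y)$ means $(y,z)\in E(D)$, and an orientation assigns only one direction to the edge $\{y,z\}$. This is exactly how the paper finishes (its $y_1$, $x_1$, $y_1'$ are your $z$, $y'$, $z'$); no iteration is needed.

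The endgame you propose instead is a genuine gap. Iterating to build a sequence of $V^{+}$-vertices and arguing that a revisit produces an oriented cycle "contradicting Remark \ref{rem-V-R-P}" does not work: that remark only says sources have weight $1$, and no vertex of an oriented cycle is a source, so such a cycle contradicts nothing. You also give no argument that the vertices generated are distinct or that the maximal-path construction terminates in a configuration you can exploit; as written the argument never reaches a contradiction. The repair is simply to delete the iteration and observe the double orientation of $\{y,z\}$ as above.
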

\begin{proof}
By contradiction, suppose there is  a strong vertex cover $\mathcal{C}$ of $D$ and $x\in L_3(\mathcal{C})$. Hence, there is $y\in \big( \mathcal{C}\setminus L_1(\mathcal{C})\big) \cap V^{+}$ with $(y,x)\in E(D)$. Then, $N_D(x)\subseteq \mathcal{C}$ and $N_{D}^{+}(y)\subseteq \mathcal{C}$, since $x\in L_3(\mathcal{C})$ and $y\in \mathcal{C}\setminus L_1(\mathcal{C})$. By hypothesis, there is a vertex $y^{\prime}\in N_D(x)\setminus y\subseteq \mathcal{C}$ such that $N_D(y^{\prime})\subseteq N_{D}^{+}(y)\subseteq \mathcal{C}$. Thus, $y^{\prime}\in L_3(\mathcal{C})$. Since $\mathcal{C}$ is strong, there is $(y_1,y^{\prime})\in E(D)$ with $y_1\in V^{+}$. So, $y_1\in N_D(y^{\prime})\subseteq N_{D}^{+}(y)$. On the other hand, $(y_1,x_1)\in E(D)$ where $x_1:=y^{\prime}$ and $y_1\in V^{+}$, then by hypothesis, there is $y_{1}^{\prime}\in N_D(x_1)\setminus y_1$ such that $N_D(y_{1}^{\prime})\subseteq N_{D}^{+}(y_1)$. Hence, $y_{1}^{\prime}\in N_D(x_1)=N_D(y^{\prime})\subseteq N_{D}^{+}(y)$. Consequently, $y\in N_D(y_{1}^{\prime})\subseteq N_{D}^{+}(y_1)$. A contradiction, since $y_1\in N_{D}^{+}(y)$. \qed
\end{proof}

\begin{corollary}\label{cor-oct30}
If $G$ is well-covered and $V^{+}$ is a subset of sinks, then $I(D)$ is unmixed.
\end{corollary}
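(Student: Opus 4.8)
The plan is to apply the characterization of unmixedness in item {\rm (3)} of Theorem \ref{theorem42}: it suffices to show that $I(G)$ is unmixed and that $L_3(\mathcal{C})=\emptyset$ for every strong vertex cover $\mathcal{C}$ of $D$. The first part is immediate: since $G$ is well-covered, Remark \ref{1star} gives that $I(G)$ is unmixed. So the whole content of the corollary reduces to proving the emptiness of $L_3$ for strong vertex covers.

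For that, I would invoke Proposition \ref{prop-oct31}, observing that its hypothesis holds \emph{vacuously}. Indeed, if $y\in V^{+}$, then by assumption $y$ is a sink, so $N_D^{+}(y)=\emptyset$; in particular there is no edge $(y,x)\in E(D)$ with $y\in V^{+}$, because such an edge would force $x\in N_D^{+}(y)$. Hence the condition ``for each $(y,x)\in E(D)$ with $y\in V^{+}$ there is $y^{\prime}\in N_D(x)\setminus y$ with $N_D(y^{\prime})\subseteq N_D^{+}(y)$'' is trivially satisfied, and Proposition \ref{prop-oct31} yields $L_3(\mathcal{C})=\emptyset$ for each strong vertex cover $\mathcal{C}$ of $D$.

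Alternatively, and even more directly, one can argue from Definition \ref{strong-cover-defn}: if $\mathcal{C}$ were a strong vertex cover with some $x\in L_3(\mathcal{C})$, the definition would provide $(y,x)\in E(D)$ with $y\in V^{+}$, again contradicting that $y$ is a sink. Either way, combining $I(G)$ unmixed with $L_3(\mathcal{C})=\emptyset$ for all strong $\mathcal{C}$ and applying the equivalence ${\rm (1)}\Leftrightarrow{\rm (3)}$ of Theorem \ref{theorem42} gives that $I(D)$ is unmixed.

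There is essentially no obstacle here; the only thing to be careful about is the bookkeeping with Remark \ref{rem-V-R-P} (sources have weight $1$), which guarantees that ``$V^{+}$ a set of sinks'' is a coherent hypothesis, and making explicit the vacuity of the hypothesis of Proposition \ref{prop-oct31}. I would present it as the short two-line argument above rather than reproving Proposition \ref{prop-oct31}.
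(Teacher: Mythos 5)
Your proposal is correct and follows essentially the same route as the paper: the hypothesis of Proposition \ref{prop-oct31} is vacuous since no edge $(y,x)$ with $y\in V^{+}$ exists when every vertex of $V^{+}$ is a sink, so $L_3(\mathcal{C})=\emptyset$ for every strong vertex cover, and combining this with Remark \ref{1star} and part {\rm (3)} of Theorem \ref{theorem42} gives unmixedness. Your alternative one-line argument directly from Definition \ref{strong-cover-defn} is also valid and slightly more self-contained, but the substance matches the paper's proof.
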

\begin{proof}
If $y\in V^{+}$, then $y$ is a sink. Thus, $(y,x)\notin E(D)$ for each $x\in V(D)$. Hence, by Proposition \ref{prop-oct31}, $L_3(\mathcal{C})=\emptyset$, for each strong vertex cover $\mathcal{C}$ of $D$. Furthermore, by Remark \ref{1star}, $I(G)$ is unmixed. Therefore $I(D)$ is unmixed, by $(3)$ in Theorem \ref{theorem42}. \qed
\end{proof}

\begin{lemma}\label{StableSet}
Let $(z,y), (y,x)$ be edges of $D$ with $y\in V^{+}$ and $N_D(x)=\{ y,x_1,\ldots ,x_s\}$. If there are $z_i\in N_D(x_i)\setminus N_{D}^{+}(y)$ such that $\{ z,x,z_1,\ldots ,z_s\}$ is a stable set, then $I(D)$ is mixed.
\end{lemma}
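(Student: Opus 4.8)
The plan is to exhibit a single strong vertex cover $\mathcal{C}'$ of $D$ that is not minimal, by arranging $x\in L_3(\mathcal{C}')$. By the equivalence ${\rm (1)}\Leftrightarrow{\rm (3)}$ in Theorem~\ref{theorem42}, the existence of such a $\mathcal{C}'$ forces $I(D)$ to be mixed, which is exactly the claim.

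First I would build an auxiliary vertex cover. Put $S:=\{z,x,z_1,\dots,z_s\}$, a stable set by hypothesis. By Remark~\ref{tau-beta}, $\mathcal{C}_1:=V(D)\setminus S$ is a vertex cover of $D$, and by construction $z,z_1,\dots,z_s,x\notin\mathcal{C}_1$. Since $x\notin\mathcal{C}_1$, Remark~\ref{einC} gives $N_D(x)\subseteq\mathcal{C}_1$, that is $y,x_1,\dots,x_s\in\mathcal{C}_1$. Now set $\mathcal{C}^{*}:=\mathcal{C}_1\cup N_D^{+}(y)$; enlarging a vertex cover keeps it a vertex cover, and $x\in N_D^{+}(y)\subseteq\mathcal{C}^{*}$ because $(y,x)\in E(D)$. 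The point of this particular enlargement is that $\mathcal{C}^{*}$ still misses $z$ and every $z_i$: indeed $z\notin N_D^{+}(y)$ since the (unique) edge joining $z$ and $y$ is oriented $(z,y)$, and $z_i\notin N_D^{+}(y)$ is part of the hypothesis. So $\mathcal{C}^{*}$ is a vertex cover with $N_D^{+}(y)\subseteq\mathcal{C}^{*}$, with $\{x,y,x_1,\dots,x_s\}\subseteq\mathcal{C}^{*}$, and with $\{z,z_1,\dots,z_s\}\cap\mathcal{C}^{*}=\emptyset$.

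Next I would apply Proposition~\ref{GeneratinAStrongVC} to $\mathcal{C}^{*}$ with $A=\{y\}$ (legitimate since $y\in V^{+}$ and $N_D^{+}(y)\subseteq\mathcal{C}^{*}$), obtaining a strong vertex cover $\mathcal{C}'$ with $N_D^{+}(y)\subseteq\mathcal{C}'\subseteq\mathcal{C}^{*}$. By Remark~\ref{VertexL3} it then suffices to verify $N_D[x]=\{x,y,x_1,\dots,x_s\}\subseteq\mathcal{C}'$. We already have $x\in N_D^{+}(y)\subseteq\mathcal{C}'$. For the remaining vertices I would look inside the proof of Proposition~\ref{GeneratinAStrongVC}: there $\mathcal{C}'$ is obtained from $\mathcal{C}^{*}$ by repeatedly deleting a vertex that lies in $L_3$ of the current (shrinking) cover and outside $N_D^{+}(y)$. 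Since every cover occurring in that process is contained in $\mathcal{C}^{*}$ and $z\in N_D(y)\setminus\mathcal{C}^{*}$, the set $N_D[y]$ is never contained in such a cover, so $y$ is never an element of any such $L_3$; being in $\mathcal{C}^{*}$, it therefore survives in $\mathcal{C}'$. For each $x_i$, either $x_i\in N_D^{+}(y)\subseteq\mathcal{C}'$, or $x_i\notin N_D^{+}(y)$ and then $z_i\in N_D(x_i)\setminus\mathcal{C}^{*}$ plays the same protective role, so $x_i$ survives. Hence $N_D[x]\subseteq\mathcal{C}'$, i.e.\ $x\in L_3(\mathcal{C}')$, so $\mathcal{C}'$ is a strong vertex cover with $L_3(\mathcal{C}')\neq\emptyset$, and $I(D)$ is mixed.

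I expect the only genuine difficulty to be this last verification: Proposition~\ref{GeneratinAStrongVC} only promises $N_D^{+}(y)\subseteq\mathcal{C}'\subseteq\mathcal{C}^{*}$ and says nothing a priori about which other vertices of $\mathcal{C}^{*}$ survive, so one must re-run (or quote precisely) its deletion step and use that it removes only $L_3$-vertices from an ever-shrinking cover. The hypotheses $(z,y)\in E(D)$ and $z_i\in N_D(x_i)\setminus N_D^{+}(y)$ are tailored precisely so that one neighbour of $y$, respectively of each $x_i$, stays permanently out of the cover, which is what protects $y$ and the $x_i$; the rest is routine bookkeeping with Remarks~\ref{tau-beta}, \ref{einC}, \ref{VertexL3} and Theorem~\ref{theorem42}.
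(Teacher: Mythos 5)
Your construction is correct and is essentially the paper's: complement of the stable set, enlarge by $N_{D}^{+}(y)$, invoke Proposition \ref{GeneratinAStrongVC} with $A=\{y\}$, and force $x\in L_3(\mathcal{C}')$ to contradict condition (3) of Theorem \ref{theorem42}. The only divergence is the step you flag as the genuine difficulty, and there the paper has a cleaner exit that keeps Proposition \ref{GeneratinAStrongVC} a black box: since $z,z_1,\ldots,z_s\notin\mathcal{C}^{*}$ and $\mathcal{C}'\subseteq\mathcal{C}^{*}$, these vertices also miss the \emph{strong} cover $\mathcal{C}'$, so Remark \ref{einC} applied to $\mathcal{C}'$ itself gives $N_D(z)\cup N_D(z_1)\cup\cdots\cup N_D(z_s)\subseteq\mathcal{C}'$, and in particular $y,x_1,\ldots,x_s\in\mathcal{C}'$ at once -- no need to re-run the deletion algorithm or argue which vertices of $\mathcal{C}^{*}$ survive. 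Your re-examination of the proof of Proposition \ref{GeneratinAStrongVC} (the excluded neighbours $z$ and $z_i$ keep $y$ and $x_i$ out of every intermediate $L_3$, so they are never deleted) is sound, but it makes the lemma depend on the internals of that proposition rather than on its statement; the paper's one-line use of Remark \ref{einC} buys exactly the modularity you were missing, and it is the same trick that makes the hypotheses natural: the $z_i$ are chosen outside $N_{D}^{+}(y)$ and stable precisely so that they stay outside the final cover, whence their neighbours $x_i$ (and $y$, via $z$) are forced inside.
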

\begin{proof}
We take $A:=\{ z,z_1,\ldots ,z_s\}$, then $A\cup \{ x\}$ is a stable set. We can take a maximal stable set $S$ of $V(G)$, such that $A\cup \{ x\}\subseteq S$. So, $\tilde{\mathcal{C}}=V(G)\setminus S$ is a minimal vertex cover of $D$. Hence, $\mathcal{C}=\tilde{\mathcal{C}} \cup N_{D}^{+}(y)$ is a vertex cover of $D$. Also $A\cap \mathcal{C}=\emptyset$, since $A\subseteq S$, $z\in N_{D}^{-}(y)$ and $z_i\notin N_{D}^{+}(y)$. By Proposition \ref{GeneratinAStrongVC}, there is a strong vertex cover $\mathcal{C}^{\prime}$ of $D$ such that $N_{D}^{+}(y)\subseteq \mathcal{C}^{\prime} \subseteq \mathcal{C}$, since $y\in V^{+}$. Thus, $A\cap \mathcal{C}^{\prime}=\emptyset$, since $A\cap \mathcal{C}=\emptyset$. Then, by Remark \ref{einC}, $N_D(A)\subseteq \mathcal{C}^{\prime}$. Furthermore $N_D(x)=\{ y,x_1,\ldots ,x_s\} \subseteq N_D(A)$. Consequently, $N_D(x)\subseteq \mathcal{C}^{\prime}$. Hence, $x\in L_3(\mathcal{C}^{\prime})$, since $x\in N_{D}^{+}(y)\subseteq \mathcal{C}^{\prime}$. Therefore, by {\rm (3)} in Theorem \ref{theorem42}, $I(D)$ is mixed. \qed
\end{proof}

\begin{theorem}\label{theorem-oct31}
Let $D=(G,\mathcal{O},w)$ be a weighted oriented graph such that $G$ has no $3$- or $5$-cycles. Hence, $I(D)$ is unmixed if and only if $D$ satisfies the following conditions:
\begin{enumerate}[noitemsep]
\item[{\rm (a)}] $G$ is well-covered.
\item[{\rm (b)}] If $(y,x)\in E(D)$ with $y\in V^{+}$, then $N_D(y^{\prime})\subseteq N_{D}^{+}(y)$ for some $y^{\prime}\in N_D(x)\setminus y$.
\end{enumerate}
\end{theorem}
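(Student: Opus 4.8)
The plan is to route everything through condition (3) of Theorem \ref{theorem42}, which says that $I(D)$ is unmixed if and only if $I(G)$ is unmixed and $L_3(\mathcal{C})=\emptyset$ for every strong vertex cover $\mathcal{C}$ of $D$. By Remark \ref{1star}, $I(G)$ being unmixed is the same as $G$ being well-covered, i.e.\ exactly condition (a); so in both directions the real work is to match condition (b) with the assertion that $L_3(\mathcal{C})=\emptyset$ for every strong vertex cover.

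The implication $(\Leftarrow)$ should be short: condition (b) is word-for-word the hypothesis of Proposition \ref{prop-oct31}, so (b) already yields $L_3(\mathcal{C})=\emptyset$ for every strong vertex cover $\mathcal{C}$, and combined with (a), Remark \ref{1star} and Theorem \ref{theorem42}(3) this gives unmixedness. Note that this direction never uses the hypothesis on $3$- and $5$-cycles.

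For $(\Rightarrow)$ I would first obtain (a) for free: $I(D)$ unmixed implies, via Theorem \ref{theorem42}(3) and Remark \ref{1star}, that $G$ is well-covered. For (b) I would argue by contradiction. If (b) fails, there is $(y,x)\in E(D)$ with $y\in V^{+}$ such that, writing $N_D(x)\setminus\{y\}=\{x_1,\ldots,x_s\}$, for each $i$ we may pick $z_i\in N_D(x_i)\setminus N_D^{+}(y)$; and since $w(y)>1$, Remark \ref{rem-V-R-P} provides some $z\in N_D^{-}(y)$. The goal is to show that $\{z,x,z_1,\ldots,z_s\}$ is a stable set of $G$: then Lemma \ref{StableSet} makes $I(D)$ mixed, contradicting our assumption. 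Checking stability is where the cycle hypotheses enter. An edge $\{z,x\}$ would make $z,y,x$ a $3$-cycle; an edge $\{x,z_i\}$ would make $x,x_i,z_i$ a $3$-cycle (after noting $z_i\neq x$, since $x\in N_D^{+}(y)$ while $z_i\notin N_D^{+}(y)$); an edge $\{z,z_i\}$ would make $z,y,x,x_i,z_i$ a $5$-cycle; and an edge $\{z_i,z_j\}$ with $i\neq j$ would make $x,x_i,z_i,z_j,x_j$ a $5$-cycle.

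The hard part will be the bookkeeping in the last two cases, where I must guarantee that the five listed vertices are pairwise distinct before invoking the absence of $5$-cycles. I expect this to reduce to a clean dichotomy: every identification that would shorten a would-be $5$-cycle to a triangle, such as $x_i=z$, $y=z_i$, $z_i=x_j$ or $x_i=z_j$, is already excluded by the no-$3$-cycle hypothesis; while the only remaining identifications, $z=z_i$ or $z_i=z_j$, are harmless, since they merely collapse the configuration to an allowed $4$-cycle and force the pair under consideration to be a single vertex, hence non-adjacent. Once this is organized, stability of $\{z,x,z_1,\ldots,z_s\}$ follows, Lemma \ref{StableSet} produces the contradiction, and the remaining steps are routine applications of the results already cited.
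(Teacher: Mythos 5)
Your proposal is correct and follows essentially the same route as the paper: the $(\Leftarrow)$ direction via Proposition \ref{prop-oct31}, Remark \ref{1star} and Theorem \ref{theorem42}(3), and the $(\Rightarrow)$ direction by taking $z\in N_D^{-}(y)$ from Remark \ref{rem-V-R-P}, building the stable set $\{z,x,z_1,\ldots,z_s\}$ using the absence of $3$- and $5$-cycles, and contradicting unmixedness through Lemma \ref{StableSet}. The distinctness bookkeeping you flag works out exactly as you predict (the paper simply states the cycles without spelling it out), so there is no gap.
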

\begin{proof}
$\Leftarrow )$ By Proposition \ref{prop-oct31} and {\rm (b)}, we have that $L_3(\mathcal{C})=\emptyset$ for each strong vertex cover $\mathcal{C}$ of $D$. Furthermore, by {\rm (a)} and Remark \ref{1star}, $I(G)$ is unmixed. Therefore, by $(3)$ in Theorem \ref{theorem42}, $I(D)$ is unmixed.\medskip

\noindent
$\Rightarrow )$ By {\rm (3)} in Theorem \ref{theorem42} and Remark \ref{1star}, $D$ satisfies {\rm (a)}. Now, we take $(y,x)\in E(D)$ with $y\in V^{+}$. Then, by Remark \ref{rem-V-R-P}, there is $z\in N_{D}^{-}(y)$. Furthermore $z \notin N_D(x)$, since $G$ has no $3$-cycles. We set $N_D(x)\setminus y=\{ x_1,\ldots ,x_s\}$. We will prove {\rm (b)}. By contradiction, suppose there is $z_i\in N_D(x_i)\setminus N_{D}^{+}(y)$ for each $i=1,\ldots , s$. If $\{ z_i,z_j\} \in E(G)$ for some $1\leqslant i<j\leqslant s$, then $(x,x_i,z_i,z_j,x_j,x)$ is a $5$-cycle. But $G$ has no $5$-cycles, then $\{ z_1,\ldots , z_s\}$ is a stable set. Now, if $\{ x,z_k\} \in E(G)$ or $\{ z,z_k\} \in E(G)$ for some $k\in \{1,\ldots ,s\}$, then $(x,x_k,z_k,x)$ is a $3$-cycle or $(z,y,x,x_k,z_k,z)$ is a $5$-cycle. Hence, $\{ x,z,z_1,\ldots , z_s\}$ is a stable set. A contradiction, by Lemma \ref{StableSet}, since $I(D)$ is unmixed. \qed
\end{proof}

\noindent
In the following results, we use the notation of Figure \ref{specialgraphs}.

\begin{remark}\rm\label{Not3-4-cycles}
Let $G$ be a graph in $\{ C_7,T_{10},P_{10},P_{13},P_{14},Q_{13}\}$. Hence,
\begin{enumerate}[noitemsep]
\item[{\rm (a)}] $G$ does not contain $4$-cycles. Furthermore, if $G$ has a $3$-cycle, then $G=T_{10}$. \smallskip
\item[{\rm (b)}] If $deg_G(x)=2$, then $x$ is not in a $3$-cycle of $G$. \smallskip
\item[{\rm (c)}] If $G\neq C_7$ and $\tilde{e}=\{ v,u\} \in E(G)$ with $deg_D(v)=deg_D(u)=2$, then $\tilde{e}\in \{ \tilde{e}_1,\tilde{e}_2,\tilde{e}_3 \}$. Also, if $\tilde{e}$ is in a $5$-cycle $C$, then $G\in \{ P_{10},P_{13}\}$, $\tilde{e}\in \{ \tilde{e}_1,\tilde{e}_2  \}$ and $C\in \{ C_1,C_2\}$ or $G=Q_{13}$, $\tilde{e}=\tilde{e}_1$ and $C=C_1$. \smallskip
\item[{\rm (d)}] If $P=(y_1,y_2,y_3)$ is a path in $G$ with $deg_G(y_i)=2$ for $i=1,2,3$, then $G=C_7$. 
\end{enumerate}
\end{remark}
\begin{proof} 
{\rm (a)} By Theorems \ref{wellcovered-characterization1} and \ref{wellcovered-characterization2}, $G$ has no $4$-cycles. Now, if $G$ has a $3$-cycle then, by Theorem \ref{wellcovered-characterization2}, $G=T_{10}$. \smallskip

\noindent
{\rm (b)} By {\rm (a)}, the unique $3$-cycle is $(c_1,c_2,c_3,c_1)$ in $T_{10}$ and $deg_{T_{10}}(c_i)=3$ for $i=1,2,3$.  \smallskip
  
\noindent
{\rm (c)} By Figure \ref{specialgraphs}, $\tilde{e} \in \{ \tilde{e}_1,\tilde{e}_2,\tilde{e}_3\}$ and $G\in \{ T_{10},P_{10},P_{13},Q_{13}\}$, since $G\neq C_7$. Now, assume $\tilde{e}$ is in a $5$-cycle $C$. By Theorem \ref{wellcovered-characterization1}, $G\neq T_{10}$. If $G=P_{10}$, then $\tilde{e}_3$ is not in a $5$-cycle. Thus, $\tilde{e}\in \{ \tilde{e}_1,\tilde{e}_2 \}$ and $C\in \{ C_1,C_2\}$. Now, if $G=P_{13}$, then $\tilde{e}\in \{ \tilde{e}_1,\tilde{e}_2 \}$ and $C\in \{ C_1,C_2\}$. Finally, if $G=Q_{13}$, then $\tilde{e}_2,\tilde{e}_3$ are not in a $5$-cycle. Hence, $\tilde{e}=\tilde{e}_1$ and $C=C_1$. \smallskip

\noindent
{\rm (d)} By contradiction, suppose $G\neq C_7$. If $e\in E(P)$, then by {\rm (c)}, $e\in \{ e_1,e_2,e_3\}$. But, $e_i\cap e_j=\emptyset$ for $i\neq j$. A contradiction, since $P$ is a path. \qed 
\end{proof}

\begin{lemma}\label{Deg-x}
Let $G$ be a graph in $\{ C_7,T_{10},P_{10},P_{13},P_{14},Q_{13}\}$ with $I(D)$ unmixed. If $(z,y),(y,x)\in E(D)$ with $y\in V^{+}$ and $N_D(x)\setminus \{ y\} =\{ x_1\}$, then $deg_D(x_1)=2$.
\end{lemma}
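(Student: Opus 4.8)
The plan is to argue by contradiction. Suppose $deg_D(x_1)\geqslant 3$. From $N_D(x)\setminus\{y\}=\{x_1\}$ we have $N_D(x)=\{y,x_1\}$ and $deg_G(x)=2$, so with the edges $(z,y),(y,x)\in E(D)$ and $y\in V^{+}$ we are exactly in the setting of Lemma \ref{StableSet} with $s=1$: it suffices to produce one vertex $z_1\in N_D(x_1)\setminus N_D^{+}(y)$ for which $\{z,x,z_1\}$ is a stable set of $G$, since then Lemma \ref{StableSet} forces $I(D)$ to be mixed, contradicting the hypothesis.

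First I would record the cycle restrictions coming from $deg_G(x)=2$. Because $G$ is simple and $(z,y),(y,x)\in E(D)$, the vertices $z,y,x$ are pairwise distinct. By Remark \ref{Not3-4-cycles}(b), $x$ lies in no $3$-cycle of $G$, so $z\notin N_D(x)$ (otherwise $(x,y,z,x)$ is a $3$-cycle) and $y\notin N_D(x_1)$ (otherwise $(x,y,x_1,x)$ is a $3$-cycle, noting $x_1\neq x$ and $x_1\neq y$). In particular $z\neq x_1$, since $x_1\in N_D(x)$ while $z\notin N_D(x)$. By Remark \ref{Not3-4-cycles}(a), $G$ has no $4$-cycle, so $z\notin N_D(x_1)$ (otherwise $(z,y,x,x_1,z)$ is a $4$-cycle) and $x$ is the \emph{only} common neighbour of $y$ and $x_1$ in $G$, as a common neighbour $w\neq x$ would give the $4$-cycle $(w,y,x,x_1,w)$.

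Next I would locate $z_1$. As $deg_D(x_1)\geqslant 3$, the set $N_D(x_1)\setminus\{x\}$ has at least two elements. Moreover $z$ is adjacent to at most one vertex of $N_D(x_1)$: two distinct vertices $w_1,w_2\in N_D(x_1)\cap N_D(z)$ would produce the $4$-cycle $(w_1,z,w_2,x_1,w_1)$, whose four vertices are pairwise distinct since $z\notin N_D(x_1)$ and $G$ is simple, contradicting Remark \ref{Not3-4-cycles}(a). Since $x\notin N_D(z)$, it follows that at most one vertex of $N_D(x_1)\setminus\{x\}$ lies in $N_D(z)$, so I may pick $z_1\in N_D(x_1)\setminus\{x\}$ with $z_1\notin N_D(z)$. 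Then $z_1\neq x_1$ and $z_1\neq y$ (because $y\notin N_D(x_1)$), hence $z_1\notin N_D(x)=\{y,x_1\}$; and $z_1\notin N_D(y)$, since $z_1\in N_D(y)$ together with $z_1\in N_D(x_1)$ would make $z_1$ a common neighbour of $y$ and $x_1$ different from $x$; thus $z_1\notin N_D^{+}(y)$. Finally $z_1\neq z$ as $z\notin N_D(x_1)$. Collecting $z\notin N_D(x)$, $z_1\notin N_D(z)$ and $z_1\notin N_D(x)$, the set $\{z,x,z_1\}$ is stable, so Lemma \ref{StableSet} yields that $I(D)$ is mixed, a contradiction. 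Therefore $deg_D(x_1)\leqslant 2$, and since each graph in $\{C_7,T_{10},P_{10},P_{13},P_{14},Q_{13}\}$ has minimum degree $2$ (inspect Figure \ref{specialgraphs}), we conclude $deg_D(x_1)=2$.

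The argument is essentially direct; I expect the only point requiring care is to notice that $deg_G(x)=2$ is exactly what lets one invoke Remark \ref{Not3-4-cycles}(b) and conclude that $x$ sits in no triangle (relevant only for $G=T_{10}$, the one graph on the list with a $3$-cycle). After that the absence of $4$-cycles alone does all the work, so the six graphs are handled uniformly and the $5$-cycles present in $P_{10},P_{13},P_{14},Q_{13}$ never intervene.
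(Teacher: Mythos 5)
Your proof is correct and follows essentially the same route as the paper's: assume $deg_D(x_1)\geqslant 3$, use Remark \ref{Not3-4-cycles} (no $4$-cycles, and $x$ of degree $2$ lies in no $3$-cycle) to select $z_1\in N_D(x_1)\setminus N_D^{+}(y)$ avoiding $N_D(z)$, and invoke Lemma \ref{StableSet} on the stable set $\{x,z,z_1\}$ to contradict unmixedness. The only differences are cosmetic: you verify $z\notin N_D(x_1)$ and the ``at most one common neighbour'' facts a bit more systematically, and you make explicit the final step (minimum degree $2$ in the listed graphs) that the paper leaves implicit.
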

\begin{proof}
By contradiction, suppose $deg_D(x_1)\geqslant 3$. Hence, there are $z_1,z_{1}^{\prime}\in N_D(x_1)\setminus \{ x\}$. By hypothesis, $deg_D(x)=2$. Then, by {\rm (b)} in Remark \ref{Not3-4-cycles}, $x$ is not in a $3$-cycle. So, $x_1\neq z$. Furthermore, by {\rm (a)} in Remark \ref{Not3-4-cycles}, $G$ has no $4$-cycles. Thus, $z\notin \{ z_{1}^{\prime}, z_1\}$ and $z_1,z_{1}^{\prime}\notin N_D(y)$. If $z_{1}^{\prime},z_1\in N_D(z)$, then $(x_1,z_{1}^{\prime},z,z_1,x_1)$ is a $4$-cycle. A contradiction, then we can assume $z_1\notin N_D(z)$. Consequently, $\{ x,z,z_1\}$ is a stable set, since $x$ is not in a $3$-cycle. A contradiction, by Lemma \ref{StableSet}, since $I(D)$ is unmixed.\qed  
\end{proof}

\begin{lemma}\label{Deg-Unm}
If $I(D)$ is unmixed, $G\in \{ C_7,T_{10},P_{10},P_{13},P_{14},Q_{13}\}$ and $\tilde{e}=(y,x)\in E(D)$ with $deg_D(x)=2$ and $y\in V^{+}$, then $G=P_{10}$ and $\tilde{e}=(d_i,b_j)$ with $\{ i,j\} =\{ 1,2\}$.
\end{lemma}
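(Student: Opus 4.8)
The plan is to argue by contradiction: suppose $I(D)$ is unmixed, $G\in\{C_7,T_{10},P_{10},P_{13},P_{14},Q_{13}\}$, and $\tilde e=(y,x)\in E(D)$ with $\deg_D(x)=2$ and $y\in V^{+}$; I want to show $G=P_{10}$ and $\tilde e=(d_i,b_j)$ with $\{i,j\}=\{1,2\}$. Since $I(D)$ is unmixed, by $(3)$ in Theorem \ref{theorem42} and Remark \ref{1star}, $G$ is well-covered; in particular each of these six graphs is well-covered. First I would invoke Remark \ref{rem-V-R-P} and Remark \ref{rem-nov13}-style reasoning: because $y\in V^{+}$, $y$ is not a source, so there is $z\in N_D^{-}(y)$, giving a path $(z,y,x)$ with $z\ne x$ (as $\deg_D(x)=2$ forces $N_D(x)=\{y,x_1\}$ for a unique $x_1\ne y$, and $x\ne z$ since otherwise $x$ would lie on a $3$-cycle $(z,y,x)$, contradicting Remark \ref{Not3-4-cycles}(b) once we note $\deg_D(x)=2$). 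So we are in the hypothesis situation of Lemma \ref{Deg-x}, which yields $\deg_D(x_1)=2$ as well. Thus $\tilde e':=\{x,x_1\}$ is an edge both of whose endpoints have degree $2$.

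Next I would split on whether $G=C_7$. If $G=C_7$: every vertex of $C_7$ has degree $2$, and $C_7$ is well-covered. I claim this forces a contradiction with Lemma \ref{StableSet}. Indeed with $N_D(x)=\{y,x_1\}$ and the path $(z,y,x)$, we need a $z_1\in N_D(x_1)\setminus N_D^{+}(y)$ with $\{z,x,z_1\}$ stable. In $C_7$, write the cycle so that the neighbours of $x_1$ other than $x$ is a single vertex $z_1$; then $z_1\ne z$ (no $4$-cycle, as $C_7$ has no $4$-cycle) and $z_1\notin N_D(x)$ and $z_1\notin N_D(z)$ (no $3$- or $5$-cycles would be violated — here I'd check $C_7$ has girth $7$, so any short chord is impossible), and $z_1\in N_D(x_1)\setminus N_D^{+}(y)$ since $z_1\notin N_D(y)$. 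So $\{x,z,z_1\}$ is stable, and Lemma \ref{StableSet} gives $I(D)$ mixed, a contradiction. Hence $G\ne C_7$. Now $\tilde e'=\{x,x_1\}$ is an edge with both endpoints of degree $2$ in a graph $\ne C_7$, so by Remark \ref{Not3-4-cycles}(c), $\tilde e'\in\{\tilde e_1,\tilde e_2,\tilde e_3\}$ and $G\in\{T_{10},P_{10},P_{13},Q_{13}\}$.

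Then I would eliminate $T_{10},P_{13},Q_{13}$ and the wrong edges of $P_{10}$ by a careful inspection of Figure \ref{specialgraphs}, each time producing either a stable set to feed into Lemma \ref{StableSet} or a direct count violating Remark \ref{MinimalStrongProp}. In each candidate, $x$ is a degree-$2$ vertex, $x_1$ its degree-$2$ neighbour (so $\{x,x_1\}$ is one of the marked edges $\tilde e_i$), and $y\in N_D(x)\setminus\{x_1\}$ is the other neighbour of $x$, which has degree $\ge 3$ and weight $>1$. For $T_{10}$: the edges $\tilde e_i$ are $\{a_i,b_i\}$, and then $y$ is $v$ (degree $3$); tracing $N_D^{-}(y)=N_D^{-}(v)$ and the second neighbour of $x$ lets me exhibit a stable triple $\{z,x,z_1\}$ as in Lemma \ref{StableSet} using the absence of $4$-cycles (Remark \ref{Not3-4-cycles}(a)) — here I'd use that the three ``branches'' of $T_{10}$ attached at $v$ are far apart. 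For $P_{13}$ and $Q_{13}$: similarly, for each admissible $\tilde e_i$ the vertex $y$ is forced, and one finds $z\in N_D^{-}(y)$ and $z_1\in N_D(x_1)\setminus N_D^{+}(y)$ with $\{z,x,z_1\}$ stable, again via Remark \ref{Not3-4-cycles}. Finally for $P_{10}$: the degree-$2$ edges are $\tilde e_1=\{a_1,b_1\}$, $\tilde e_2=\{a_2,b_2\}$, $\tilde e_3=\{c_1,c_2\}$. For $\tilde e_3$, the neighbours of $c_1$ and $c_2$ are $g_1,g_2$ respectively (degree $2$? — check: in $P_{10}$, $c_1$ is adjacent to $g_1$ and $c_2$, and $c_2$ to $c_1$ and $g_2$), and Lemma \ref{Deg-x} would then require the \emph{neighbour-of-$x$-other-than-$x_1$} to have degree $2$, which fails; so $\tilde e_3$ is excluded and we are left with $\tilde e\in\{\tilde e_1,\tilde e_2\}$. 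For $\tilde e_1$: $x\in\{a_1,b_1\}$, and its other neighbour is $d_1$ or $g_1$ — but $g_1$ has degree $2$ while $y$ must have weight $>1$ hence not be a source, and more to the point one checks the only way to avoid a stable triple for Lemma \ref{StableSet} is $y=d_1$ and $x=b_1$; symmetrically $\tilde e_2$ forces $y=d_2$, $x=b_2$. Thus $\tilde e=(d_i,b_j)$ with $\{i,j\}=\{1,2\}$, as claimed.

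The main obstacle I anticipate is the last bullet of the previous paragraph — the case analysis on $P_{10}$, $P_{13}$, $Q_{13}$, $T_{10}$ — where one must, for each of the finitely many edges $\tilde e_i$ and each choice of orientation making the hypothesis hold, either exhibit the stable set $\{z,x,z_1,\ldots\}$ needed for Lemma \ref{StableSet} or invoke Lemma \ref{Deg-x} to get a degree contradiction. This is purely a bookkeeping exercise in the explicit graphs of Figure \ref{specialgraphs}, but it is the step where an error is most likely; the engine is always the same (no short cycles $\Rightarrow$ nonadjacency $\Rightarrow$ stable set $\Rightarrow$ mixed), so I would organize it as a short table of cases rather than prose.
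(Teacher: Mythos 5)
Your opening moves match the paper's: use Remark \ref{rem-V-R-P} to get $z\in N_{D}^{-}(y)$, write $N_D(x)=\{y,x_1\}$, invoke Lemma \ref{Deg-x} to get $deg_D(x_1)=2$, and dispose of $C_7$ with a stable triple. But there is a genuine gap after that, and it is located exactly where you defer to ``careful inspection.'' The pivotal step you omit is this: writing $N_D(x_1)=\{x,z_1\}$, the absence of $3$- and $4$-cycles forces $\{x,z,z_1\}$ to be stable \emph{unless} $\{z,z_1\}\in E(G)$; since Lemma \ref{StableSet} forbids the stable triple, you must conclude $\{z,z_1\}\in E(G)$, so $(z,y,x,x_1,z_1,z)$ is a $5$-cycle through the edge $\{x,x_1\}$. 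By the second half of Remark \ref{Not3-4-cycles}(c) this single observation eliminates $T_{10}$ outright and pins $\{x,x_1\}$ down to $\tilde e_1,\tilde e_2$ of $P_{10}$ or $P_{13}$, or $\tilde e_1$ of $Q_{13}$, with $C\in\{C_1,C_2\}$. More importantly, it shows that your proposed engine for the surviving cases --- ``find $z_1\in N_D(x_1)\setminus N_{D}^{+}(y)$ with $\{z,x,z_1\}$ stable'' --- cannot succeed there, because in every surviving configuration $z$ and $z_1$ are adjacent and the triple is not stable.

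What the paper actually does in those cases is qualitatively different and is missing from your plan. First it proves $N_{D}^{-}(y)=\{z\}$ (any second in-neighbour $y'$ of $y$ yields the stable triple $\{y',x,z_1\}$, again by no $4$-cycles), which forces every other edge at $y$ to be oriented out of $y$; this propagation of orientation is what makes the subsequent contradictions possible. Then, for $P_{13}$, $Q_{13}$, and the wrong orientation of $\tilde e_1$ in $P_{10}$, it applies Lemma \ref{StableSet} not at the original $x$ but at a newly forced out-neighbour of $y$ of degree $3$, with a larger stable set (e.g.\ $\{c_1,d_2,a_4,d_1\}$ in $P_{13}$, $\{h,c_2,g_1\}$ in $Q_{13}$, $\{d_1,c_1,g_2\}$ in $P_{10}$). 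None of this is recoverable from the method you describe. Two smaller slips: in your $T_{10}$ sketch the other neighbour $y$ of $x$ need not be $v$ (it can be $c_i$ when $x=b_i$), and your elimination of $\tilde e_3$ in $P_{10}$ misreads Lemma \ref{Deg-x} --- that lemma constrains $x_1$ (the neighbour of $x$ other than $y$), which for $\tilde e_3=\{c_1,c_2\}$ does have degree $2$, so no contradiction arises that way; $\tilde e_3$ is excluded because it lies on no $5$-cycle.
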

\begin{proof}
By Remark \ref{rem-V-R-P}, there is $z\in N_{D}^{-}(y)$. We set $N_D(x)=\{ y,x_1 \}$, then by {\rm (b)} in Remark \ref{Not3-4-cycles}, $z\neq x_1$. Thus, by Lemma \ref{Deg-x}, $deg_D(x_1)=2$. Now, we set $N_D(x_1)=\{ x,z_1\}$. By {\rm (b)} in Remark \ref{Not3-4-cycles}, $x$ is not in a $3$-cycle. So, $z,z_1\notin N_D(x)$ and $z_1\neq y$. Also, by {\rm (a)} in Remark \ref{Not3-4-cycles}, $G$ has no $4$-cycles. Then, $z_1\neq z$ and $z_1\notin N_D(y)$.  If $z\notin N_D(z_1)$, then $\{ x,z,z_1\}$ is a stable set. A contradiction, by Lemma \ref{StableSet}, since $I(D)$ is unmixed. Hence, $\{ z_1,z\} \in E(G)$ and $C:=(z,y,x,x_1,z_1,z)$ is a $5$-cycle. Suppose $y^{\prime}\in N_{D}^{-}(y)\setminus \{ z\}$, then $y^{\prime}\notin N_D(z_1)$, since $(z_1,z,y,y^{\prime},z_1)$ is not in a $4$-cycle in $G$. Consequently, $\{ y^{\prime},x,z_1\} $ is a stable set, since $deg_D(x)=2$. A contradiction, by Lemma \ref{StableSet}, since $(y^{\prime},y),(y,x)\in E(D)$, $y\in V^{+}$, $N_D(x)=\{ y,x_1\}$ and $z_1\in N_D(x_1)\setminus N_{D}^{+}(y)$. Hence, $N_{D}^{-}(y)=\{ z\}$. Now, by {\rm (c)} in Remark \ref{Not3-4-cycles} and by symmetry of $P_{10}$ and $P_{13}$, we can assume $\{ x,x_1\} =\tilde{e}_1$, $C=C_1$ and $G\in \{ P_{10},P_{13},Q_{13}\}$. \smallskip

\noindent
First, assume $G=P_{13}$. By symmetry and notation of Figure \ref{specialgraphs}, we can suppose $x_1=a_1$ and $x=a_2$, since $\tilde{e}_1=\{ x,x_1\}$. Then, $y=b_2$, $z=c_1$ and $z_1=b_1$. Thus, $(b_2,d_2)\in E(D)$, since $y=b_2$ and $N_{D}^{-}(y)=\{ z\} =\{ c_1\}$. By Figure \ref{specialgraphs}, $(c_1,b_2)=(z,y)\in E(D)$, $(b_2,d_2)\in E(D)$, $b_2=y\in V^{+}$ and $N_D(d_2)=\{ b_2, b_4,v\}$. Also, $a_4\in N_D(b_4)$, $d_1\in N_D(v)\setminus N_D(b_2)$ and $\{ c_1,d_2,a_4,d_1\}$ is a stable set. A contradiction, by Lemma \ref{StableSet}, since $I(D)$ \linebreak is unmixed. \smallskip

\noindent
Now, suppose $G=Q_{13}$. By the symmetry of we can suppose $x=a_2$ and $x_1=a_1$, then $d_2=y\in V^{+}$, $z=h$ and $(h,d_2)=(z,y)\in E(D)$. So, $(d_2,c_2)\in E(D)$, since $N_{D}^{-}(d_2)=N_{D}^{-}(y)=\{ z\} =\{ h\}$. A contradiction, by Lemma \ref{StableSet}, since $(h,d_2),(d_2,c_2)\in E(D)$, $d_2\in V^{+}$, $N_D(c_2)=\{ d_2,b_1\}$, $g_1\in N_D(b_1)\setminus N_{D}^{+}(d_2)$ and $\{ h,c_2,g_1\}$ is a stable set.  \smallskip

\noindent
Hence, $G=P_{10}$ and $\{ x,x_1 \} =\tilde{e}_1=\{ a_1,b_1\}$. If $x=a_1$ and $x_1=b_1$, then $g_1=y\in V^{+}$, $(d_1,g_1)=(z,y)\in E(D)$, since $C=C_1$. Furthermore, $(g_1,c_1)\in E(D)$, since $N_{D}^{-}(g_1)=N_{D}^{-}(y)=\{ z\} =\{ d_1\}$. A contradiction by Lemma \ref{StableSet}, since $(d_1,g_1),(g_1,c_1)\in E(D)$, $g_1\in V^{+}$, $N_D(c_1)=\{ g_1,c_2\}$, $g_2\in N_D(c_2)$ and $\{ d_1,c_1,g_2\} $ is stable. Therefore, $x=b_1$ and $x_1=a_1$, implying $y=d_2$ and $(y,x)=(d_{2},b_{1})$, since $C=C_1$.  \qed
\end{proof}

\begin{remark}\rm\label{V2}
Assume $I(D)$ is unmixed, $G\in \{ C_7,T_{10},Q_{13},P_{13},P_{14}\}$, $\mathcal{C}$ is a strong vertex cover of $D$ and $y\in \mathcal{C}\cap V^{+}$ such that $N_G(y)\setminus \mathcal{C} \subseteq V_2:=\{ a\in V(G)\mid deg_G(a)=2\}$. We take $b\in N_G(y)\setminus \mathcal{C}$. If $(y,b)\in E(D)$, then by Lemma \ref{Deg-Unm}, $G=P_{10}$. A contradiction, then $(b,y)\in E(D)$. Consequently, $N_D(y)\setminus \mathcal{C} \subseteq N_{D}^{-}(y)$, i.e. $N_{D}^{+}(y)\subseteq \mathcal{C}$. Hence, $y\in \mathcal{C} \setminus L_1(\mathcal{C})$.
\end{remark}

\begin{proposition}\label{T10,P13,P14,Q13-Unmix}
If $I(D)$ is unmixed, with $G\in \{ C_7,T_{10},Q_{13},P_{13},P_{14}\}$, then the vertices of $V^{+}$ are sinks.
\end{proposition}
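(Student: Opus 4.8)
The plan is to argue by contradiction: assume $I(D)$ is unmixed but some $y\in V^{+}$ is not a sink, and fix $x$ with $(y,x)\in E(D)$. Since $w(y)>1$, Remark~\ref{rem-V-R-P} gives that $y$ is not a source, so there is $z$ with $(z,y)\in E(D)$. First I pin down $deg_D(x)$. If $deg_D(x)=1$, then $N_D(x)=\{y\}$; as $z\in N_D^{-}(y)$ we have $z\neq y$ and $z\notin N_D(x)$, so $\{z,x\}$ is a stable set and the case $s=0$ of Lemma~\ref{StableSet} shows $I(D)$ is mixed, a contradiction. If $deg_D(x)=2$, then Lemma~\ref{Deg-Unm} forces $G=P_{10}$, contradicting $G\in\{C_7,T_{10},Q_{13},P_{13},P_{14}\}$. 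Hence $deg_D(x)\geqslant 3$; in particular $G\neq C_7$, which is $2$-regular, so only $T_{10},P_{13},P_{14},Q_{13}$ remain.

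Write $N_D(x)=\{y,x_1,\ldots,x_s\}$ with $s=deg_D(x)-1\geqslant 2$. The aim is to choose, for each $i$, a vertex $z_i\in N_D(x_i)\setminus N_D^{+}(y)$ so that $A\cup\{x\}$ is a stable set, where $A=\{z,z_1,\ldots,z_s\}$; then Lemma~\ref{StableSet} yields that $I(D)$ is mixed, the desired contradiction. By Remark~\ref{Not3-4-cycles}(a) none of these four graphs has a $4$-cycle, and the only one with a triangle is $T_{10}$ (whose unique triangle has vertices $c_1,c_2,c_3$). Thus, whenever $x$ lies on no triangle, a short comparison of cycle lengths shows: $z\notin N_D(x)$; every $z_i\in N_D(x_i)\setminus\{x\}$ lies outside $N_D(x)\cup N_D(y)$ (so outside $N_D^{+}(y)$) and differs from $z$ and from each $x_j$; and $z_i\neq z_j$ for $i\neq j$. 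Hence the only edges that could spoil stability of $A\cup\{x\}$ are of the form $\{z_i,z_j\}$ or $\{z,z_i\}$, and each would close a $5$-cycle (through $x$, resp.\ through $z,y,x$). It then suffices to run through the finitely many cases read off from Figure~\ref{specialgraphs} — the vertices $x$ of degree $\geqslant 3$, their in-neighbours $y$, and the in-neighbours $z$ of $y$ — and in each to choose the $z_i$ off the short $5$-cycles; where a forced choice of some $z_i$ would be adjacent to $x$ or to $z$, one gains room either by switching $z$ inside $N_D^{-}(y)$ or by a second appeal to Lemma~\ref{Deg-Unm}.

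For $G=T_{10}$ the vertices of degree $\geqslant 3$ are $v$ and $c_1,c_2,c_3$: if $x=v$, each neighbour $a_i$ of $v$ has $N_D(a_i)=\{v,b_i\}$, so taking $z_i=b_i$ for the two indices with $a_i\neq y$ makes $\{z,b_i,b_j,v\}$ stable; if $x=c_k$ and $y$ is the degree-$2$ neighbour $b_k$ of $c_k$, one argues the same way; and if $y=c_\ell$ with $\ell\neq k$, then Lemma~\ref{Deg-Unm} excludes $(c_\ell,b_\ell)\in E(D)$ (otherwise $deg_D(b_\ell)=2$ and $c_\ell\in V^{+}$ force $G=P_{10}$), so $z:=b_\ell\in N_D^{-}(c_\ell)$ is available and $\{b_\ell,b_m,a_k,c_k\}$ (with $m$ the remaining index) is the stable set sought. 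For $G\in\{P_{13},P_{14},Q_{13}\}$, all of girth $\geqslant 5$, no triangle obstruction arises and the same recipe applies, using Figure~\ref{specialgraphs} and, at a few places, Lemma~\ref{Deg-Unm} to exclude an unwanted orientation at a degree-$2$ vertex. The main obstacle is precisely this last, finite-but-delicate, verification: in a girth-$5$ graph a forced $z_i$ may be adjacent to $x$ or to $z$, and one needs enough slack — in the choice of $z_i$, of $z\in N_D^{-}(y)$, or via Lemma~\ref{Deg-Unm} — to dodge it; the argument is tightest near the triangle of $T_{10}$ and near the short $5$-cycles of $P_{13},P_{14},Q_{13}$ indicated in Figure~\ref{specialgraphs}. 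Everything else is routine bookkeeping with Remark~\ref{Not3-4-cycles} and the tools Lemma~\ref{StableSet}, Lemma~\ref{Deg-x}, Lemma~\ref{Deg-Unm} and Remark~\ref{rem-V-R-P}.
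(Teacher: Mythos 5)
Your overall strategy is genuinely different from the paper's: after the same reductions (Remark~\ref{rem-V-R-P}, Lemma~\ref{Deg-Unm} to force $deg_D(x)\geqslant 3$, hence $G\neq C_7$), you want to contradict unmixedness via Lemma~\ref{StableSet}, whereas the paper contradicts Theorem~\ref{theorem42} by exhibiting, for each graph and each admissible $x$, an explicit vertex cover $\mathcal{C}$ with $L_3(\mathcal{C})=\{x\}$ and $y\in\mathcal{C}\setminus L_1(\mathcal{C})$ (so $\mathcal{C}$ is strong), using Lemma~\ref{Deg-Unm} only to pin down orientations at degree-two neighbours. The difficulty is that the heart of the proposition is exactly the finite verification over $T_{10},P_{13},P_{14},Q_{13}$, and you carry it out only for $T_{10}$; for the other three graphs you declare it routine and describe a recipe whose stated remedies (re-choosing the $z_i$, re-choosing $z\in N_D^{-}(y)$, or a second appeal to Lemma~\ref{Deg-Unm}) are not sufficient.

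Concretely, in $P_{13}$ suppose $c_1\in V^{+}$ with $N_D^{-}(c_1)=\{b_2\}$ and $(c_1,b_1),(c_1,c_2)\in E(D)$, and run your procedure on $y=c_1$, $x=b_1$ (a degree-$3$ out-neighbour, so Lemma~\ref{Deg-Unm} gives nothing). Then $z=b_2$ is forced; the neighbours of $x$ other than $y$ are $a_1$ and $d_1$, and since $N_D(a_1)=\{a_2,b_1\}$ with $b_1\in N_D^{+}(y)$, the choice $z_1=a_2$ is forced; but $a_2$ is adjacent to $z=b_2$ (they lie on the $5$-cycle $(c_1,b_1,a_1,a_2,b_2,c_1)$), so no stable set $\{z,x,z_1,z_2\}$ of the required form exists for this pair $(y,x)$, no matter how the rest of $D$ is oriented. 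The contradiction is still reachable, but only by switching to the other out-neighbour $x=c_2$ of $y$ and applying Lemma~\ref{StableSet} there --- a move your write-up, which fixes $x$ at the outset and lists only the two remedies above, does not provide. So what you have is a plausible plan along a different route (and my spot checks suggest it can be completed if one also allows the choice of $x$ to vary), but the decisive, orientation-by-orientation case analysis is missing, and as described the procedure stalls at configurations like the one above; this is precisely the kind of obstruction the paper's construction of covers with $L_3(\mathcal{C})=\{x\}$ avoids, since there one only has to control $N_D^{+}(y)$ for the single vertex $y$ rather than build a large stable set avoiding it.
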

\begin{proof}
By contradiction, suppose there is $(y,x)\in E(D)$ with $y\in V^{+}$. Then, by Lemma \ref{Deg-Unm}, $deg_D(x)\geqslant 3$. Thus, $G\neq C_7$. By Remark \ref{rem-V-R-P}, $y$ is not a source. So, there is $(z,y)\in E(D)$. We set $V_2:=\{ a\in V(G)\mid deg_G(a)=2\}$. By Theorem \ref{theorem42}, $L_3(\tilde{\mathcal{C}})=\emptyset$ for each strong vertex cover $\tilde{\mathcal{C}}$ of $D$, since $I(D)$ is unmixed. Hence, to obtain a contradiction, we will give a vertex cover $\mathcal{C}$ of $D$ such that $L_3(\mathcal{C})=\{ x\}$ and $y\in \mathcal{C}\setminus L_1(\mathcal{C})$, since with these conditions $\mathcal{C}$ is strong. We will use the notation of Figure \ref{specialgraphs}. \smallskip 

\noindent
{\bf Case (1)} If $D=T_{10}$, then $x\in \{ c_1,c_2,c_3,v\}$, since $deg_G(x)\geqslant 3$. 

\noindent
\underline{Case (1.a)} $x\in \{ c_1,c_2,c_3\}$. By symmetry of $P_{10}$, we can assume $x=c_1$ and $y\in \{ b_1,c_2\}$. Thus, $\mathcal{C}_1=\{ v,a_2,a_3,b_1,c_1,c_2,c_3\}$ is a vertex cover with $L_3(\mathcal{C}_1)=\{ x\}$. If $y=b_1$, then $y\in C_1$ and $z=a_1$, since $N_D(b_1)=\{ a_1,c_1\}$. So, $N_{D}^{+}(y)=\{ c_1\} \subseteq \mathcal{C}_1$. Now, if $y=c_2$, then $y\in \mathcal{C}_1$. Furthermore, by Lemma \ref{Deg-Unm}, $(b_2,c_2)\in E(D)$, since $c_2=y\in V^{+}$, $b_2\in V_2$ and $I(D)$ is unmixed. Consequently, $N_{D}^{+}(y)\subseteq \{ c_1,c_3\} \subseteq \mathcal{C}_1$. Hence, $y\in \mathcal{C}_1\setminus L_1(\mathcal{C}_1)$ and we take $\mathcal{C}=\mathcal{C}_1$. 

\noindent
\underline{Case (1.b)} $x=v$. Then, $\mathcal{C}_2=\{ v,a_1,a_2,a_3,c_1,c_2,c_3\}$ is a vertex cover with $L_3(\mathcal{C}_2)=\{ x\}$. By symmetry of $P_{10}$, we can suppose $y=a_1$. Consequently, $z=b_1$ and $N_{D}^{+}(y)=\{ v\} \subseteq \mathcal{C}_2$, since $a_1\in V_2$. Hence, $y\in \mathcal{C}_2\setminus L_1(\mathcal{C}_2)$ and we take $\mathcal{C}=\mathcal{C}_2$. \medskip

\noindent
{\bf Case (2)} If $D=P_{14}$, then, by symmetry, we can assume $y=a_1$ and $x\in \{ a_2,b_1\}$. 

\noindent
\underline{Case (2.a)} $x=a_2$. Thus, $z\in \{ a_7,b_1\}$. We take $\mathcal{C}_3=\{ a_1,a_2,a_3,a_4,a_6,b_1,b_2,b_5,b_6,b_7\}$ if $z=a_7$ or $\mathcal{C}_3=\{ a_1,a_2,a_3,a_5,a_7,b_2,b_3,b_4,b_5,b_6\}$ if $z=b_1$. So, $\mathcal{C}_3$ is a vertex cover of $D$, $y\in \mathcal{C}_3$ and $L_3(\mathcal{C}_3)=\{ x\}$. Furthermore, $N_D(y)\setminus \mathcal{C}_3=\{ z\}$ and $z\in N_{D}^{-}(y)$, then $y\in \mathcal{C}_3\setminus L_1(\mathcal{C}_3)$ and we take $\mathcal{C}=\mathcal{C}_3$.  

\noindent
\underline{Case (2.b)} $x=b_1$. By symmetry of $P_{14}$, we can suppose $z=a_2$. Then, \linebreak $\mathcal{C}_4=\{ a_1,a_3,a_4,a_5,a_7,b_1,b_2,b_3,b_6,b_7\}$ is a vertex cover of $D$ with $L_3(\mathcal{C}_4)=\{ x\}$. Also, $N_{D}(y)=\setminus\mathcal{C}_4=\{ a_2\}$ and $a_2=z\in N_{D}^{-}(y)$. Hence, $y \in \mathcal{C}_4\setminus L_1(\mathcal{C}_4)$ and we take $\mathcal{C}=\mathcal{C}_4$. \smallskip

\noindent
{\bf Case (3)} If $D=P_{13}$, then we can assume $x\in \{ b_1,c_2,d_1\}$, since $deg_G(x)\geqslant 3$. 

\noindent
\underline{Case (3.a)} $x=c_2$. Then, $y\in N_D(c_2)=\{ b_3,b_4,c_1\}$. Without loss of generality, we can su\-ppo\-se $y\in \{ b_3,c_1\}$. If $y=c_1$, then $z\in \{ b_1,b_2\}$. By symmetry, we can assume $z=b_1$ so $N_{D}^{+}(y)=N_{D}^{+}(c_1)\subseteq\{ b_2,c_2\}$. We take $\mathcal{C}_5=\{ a_1,a_4,b_2,b_3,b_4,c_1,c_2,d_1,v\}$. Thus, $\mathcal{C}_5$ is a vertex cover of $D$ with $L_3(\mathcal{C}_5)=\{ x\}$, $y\in \mathcal{C}_5$ and $N_{D}^{+}(y)\subseteq \{b_2,c_2\} \subseteq \mathcal{C}_5$. Now, if $y=b_3$, then $b_3\in V^{+}\cap \mathcal{C}_5$. Hence, by Lemma \ref{Deg-Unm}, $(a_3,b_3)\in E(D)$, sin\-ce $a_3\in V_2$ and $I(D)$ is unmixed. Consequently, $N_{D}^{+}(y)\subseteq \{ c_2,d_1\} \subset \mathcal{C}_5$. Therefore, $y\in \mathcal{C}_5\setminus L_1(\mathcal{C}_5)$ and we take $\mathcal{C}=\mathcal{C}_5$.  

\noindent
\underline{Case (3.b)} $x=b_1$. Hence, $\mathcal{C}_6=\{ a_1,a_3,b_1,b_2,b_3,b_4,c_1,d_1,d_2\}$ is a vertex cover of $D$ with $L_3(\mathcal{C}_6)=\{ x\}$ and $y\in N_D(b_1)=\{ a_1,c_1,d_1\}$. Also, $y \in \mathcal{C}_6$. If $y\in \{ a_1,d_1\}$, then $N_D(y)\setminus \mathcal{C}_6\subseteq \{ a_2,v\} \subseteq V_2$. Consequently, by Lemma \ref{Deg-Unm}, $(b,y)\in E(D)$ for each $b\in N_D(y)\setminus \mathcal{C}_6$, since $y\in V^{+}$. So, $N_{D}^{+}\subseteq \mathcal{C}_6$ implying $y\in \mathcal{C}_6\setminus L_1(\mathcal{C}_6)$. Now, if $y=c_1$. We can assume $(c_2,c_1)\in E(D)$, since in another case we have the case (3.a) with $x=c_2$ and $y=c_1$. Thus, $y=c_1\in \mathcal{C}_6\setminus L_1(\mathcal{C}_6)$, since $N_{D}^{+}(c_1)\subseteq\{ b_1,b_2\}\subset \mathcal{C}_6$. Therefore, we take $\mathcal{C}=\mathcal{C}_6$.

\noindent
\underline{Case (3.c)} $x=d_1$. Then, $y\in N_G(d_1)=\{ b_1,b_3,v\}$. By symmetry, we can assume $y\in \{ b_1,v\}$. Furthermore, $\mathcal{C}_7=\{ a_2,a_4,b_1,b_2,b_3,b_4,c_1,d_1,v\}$ is a vertex cover of $D$ with $L_3(\mathcal{C}_7)=\{ x\}$ and $y\in \mathcal{C}_7$. If $y=b_1$, then $N_D(y)\setminus \mathcal{C}_7 \subseteq \{a_1\}$. Also, by Lemma \ref{Deg-Unm}, $N_D(y)\setminus \mathcal{C}_7\subseteq N_{D}^{-}(y)$, since $a_1\in V_2$ and $y\in V^{+}$. Thus, $y\in \mathcal{C}_7\setminus L_1(\mathcal{C}_7)$. Now, if $y=v$, then $z=d_2$, since $N_D(v)=\{ d_1,d_2\}$. This implies $N_{D}^{+}(v)=\{ d_1\} \subseteq \mathcal{C}_7$. So, $y\in \mathcal{C}_7\setminus L_1(\mathcal{C}_7)$ and we take $\mathcal{C}=\mathcal{C}_7$. \smallskip

\noindent
{\bf Case (4)} $D=Q_{13}$. Hence, $x\in \{ d_1,d_2,g_1,g_2,h,h^{\prime}\}$, since $deg_D(x)\geqslant 3$. By symmetry, we can suppose $x\in \{ d_2,g_2,h,h^{\prime}\}$.

\noindent
\underline{Case (4.a)} $x\in \{ d_2,g_2\}$. We take $\mathcal{C}_8=\{ a_2,c_1,c_2,d_1,d_2,g_1,g_2,h,h^{\prime}\}$ if $x=d_2$ or $\mathcal{C}_8=\{ a_1,b_2,c_2,d_1,d_2,g_1,g_2,h,h^{\prime}\}$ if 
$x=g_2$. Thus, $\mathcal{C}_8$ is a vertex cover of $D$ with $L_3(\mathcal{C}_8)=\{ x\}$ and $V(G)\setminus \mathcal{C}_8=\{ a_1,b_1,b_2,v\} \cup \{ a_2,b_1,c_1,v\} \subseteq V_2$. Consequently, by Lemma \ref{Deg-Unm}, $N_D(y)\setminus \mathcal{C}_8\subseteq N_{D}^{-}(y)$, since $y\in V^{+}$. This implies $N_{D}^{+}(y)\subseteq \mathcal{C}_8$. Therefore $y\in \mathcal{C}_8\setminus L_1(\mathcal{C}_8)$ and we take $\mathcal{C}=\mathcal{C}_8$. 

\noindent
\underline{Case (4.b)} $x\in \{ h,h^{\prime}\}$. We take $\mathcal{C}_9=\{ a_2,b_1,b_2,d_1,d_2,g_1,g_2,h,v\}$ if $x=h$ or $\mathcal{C}_9=\{ a_2,c_1,c_2,d_1,d_2,g_1,g_2,h^{\prime},v\}$ if $x=h^{\prime}$. Thus, $\mathcal{C}_9$ is a vertex cover of $D$ with $L_3(\mathcal{C}_9)=\{ x\}$. Also, $y\in N_G(x)\subseteq \{ v,d_1,d_2,g_1,g_2\}$. If $y=v$, then $\{ x,z\} \subseteq N_D(y)=\{ h,h^{\prime}\}$. Hence, $N_{D}^{+}(y)=\{ x\} \subseteq \mathcal{C}_9$, then $y\in \mathcal{C}_9\setminus L_1(\mathcal{C}_9)$. Now, if $y\neq v$, then $y\in \{ d_1,d_2\}$ when $x=h$ or $y\in \{ g_1,g_2\}$ when $x=h^{\prime}$. Then, $N_D(y)\setminus \mathcal{C}_9\subseteq \{ a_1,c_1,c_2\} \subseteq V_2$ if $x=h$ or $N_D(y)\setminus \mathcal{C}_9\subseteq \{ b_1,b_2\} \subseteq V_2$ if $x=h^{\prime}$. Consequently, by Lemma \ref{Deg-Unm}, $N_D(y)\setminus \mathcal{C}_9\subseteq N_{D}^{-}(y)$, since $y\in V^{+}$. Thus, $N_{D}^{+}(y)\subseteq \mathcal{C}_9$ and $y\in \mathcal{C}_9\setminus L_1(\mathcal{C}_9)$. Therefore, we take $\mathcal{C}=\mathcal{C}_9$.  \qed
\end{proof}

\begin{theorem}\label{No4,5Cyc-Unmix}
Let $D=(G,\mathcal{O},w)$ be a connected weighted oriented graph without $4$- and $5$-cycles. Hence, $I(D)$ is unmixed if and only if $D$ satisfies one of the following conditions:
\begin{enumerate}[noitemsep]
\item[{\rm (a)}] $G\in \{C_7,T_{10}\}$ and the vertices of $V^{+}$ are sinks.
\item[{\rm (b)}] Each vertex is in exactly one simplex of $D$ and each simplex of $D$ has no generating $\star$-semi-forests.
\end{enumerate}
\end{theorem}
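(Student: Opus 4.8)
The plan is to assemble the statement from results already in hand, since the genuinely computational work has been absorbed into earlier propositions. The four ingredients I would use are: the structure theorem for connected well-covered graphs without $4$- and $5$-cycles (Theorem \ref{wellcovered-characterization1}); the fact that on the exceptional graphs $C_7$ and $T_{10}$ unmixedness forces every vertex of $V^{+}$ to be a sink (Proposition \ref{T10,P13,P14,Q13-Unmix}); the characterization of unmixedness for $SCQ$ graphs (Theorem \ref{SCQ-char}); and the easy sufficiency criterion of Corollary \ref{cor-oct30}. The reduction to the $SCQ$ case will go through the observation that, for a graph without $5$-cycles whose simplices partition the vertex set, one may take $C_G=Q_G=\emptyset$ in Definition \ref{CondSCQ}.

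For the forward direction I would argue as follows. Assuming $I(D)$ is unmixed, part (3) of Theorem \ref{theorem42} together with Remark \ref{1star} yields that $G$ is well-covered. Since $G$ is connected without $4$- and $5$-cycles, Theorem \ref{wellcovered-characterization1} splits the argument into two cases. If $G\in\{C_7,T_{10}\}$, then Proposition \ref{T10,P13,P14,Q13-Unmix} says directly that the vertices of $V^{+}$ are sinks, which is precisely condition (a). Otherwise $\{V(H)\mid H\in S_G\}$ is a partition of $V(G)$, so every vertex lies in exactly one simplex; because $G$ has no $5$-cycles the set of basic $5$-cycles is empty, hence with $Q_G=\emptyset$ the graph $G$ is $SCQ$ with $C_G=Q_G=\emptyset$. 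Applying Theorem \ref{SCQ-char} to the unmixed ideal $I(D)$ then forces condition (b) of that theorem, namely that each simplex of $D$ has no generating $\star$-semi-forests, which is exactly condition (b) here.

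For the converse I would treat the two conditions separately. If (a) holds, then $G\in\{C_7,T_{10}\}$ is well-covered by Theorem \ref{wellcovered-characterization1} and $V^{+}$ is a subset of sinks, so Corollary \ref{cor-oct30} gives that $I(D)$ is unmixed. If (b) holds, the hypothesis that each vertex lies in exactly one simplex makes $\{V(H)\mid H\in S_G\}$ a partition of $V(G)$, and again (no $5$-cycles, $Q_G=\emptyset$) $G$ is $SCQ$ with $C_G=Q_G=\emptyset$; conditions (a) and (c) of Theorem \ref{SCQ-char} are vacuous and condition (b) of that theorem is our hypothesis, so Theorem \ref{SCQ-char} yields that $I(D)$ is unmixed. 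I do not expect a serious obstacle in this theorem itself: it is essentially a repackaging of Theorem \ref{wellcovered-characterization1}, Proposition \ref{T10,P13,P14,Q13-Unmix} and Theorem \ref{SCQ-char}, all of whose proofs contain the real content. The only points requiring care are checking that the two alternatives in Theorem \ref{wellcovered-characterization1} match up cleanly with (a) and (b) --- in particular that $C_7$ and $T_{10}$ carry no simplices, so the two cases are mutually exclusive --- and verifying the trivial facts that the empty matching has property \textbf{(P)} and that absence of $5$-cycles kills $C_G$.
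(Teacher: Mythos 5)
Your proposal is correct and follows essentially the same route as the paper: unmixedness gives well-coveredness via Theorem \ref{theorem42}(3) and Remark \ref{1star}, Theorem \ref{wellcovered-characterization1} splits into the exceptional case (handled by Proposition \ref{T10,P13,P14,Q13-Unmix} forward and Corollary \ref{cor-oct30} backward) and the simplex-partition case, which is reduced to Theorem \ref{SCQ-char} with $C_G=Q_G=\emptyset$ exactly as in the paper. The extra remarks you flag (vacuity of conditions (a) and (c) of Theorem \ref{SCQ-char}, property \textbf{(P)} for the empty matching) are fine and are left implicit in the paper's argument.
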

\begin{proof}
$\Rightarrow )$ By {\rm (3)} in Theorem \ref{theorem42} and Remark \ref{1star}, $G$ is well-covered. Thus, by Theorem \ref{wellcovered-characterization1}, $G\in \{ C_7,T_{10}\}$ or $\{ V(H)\mid H\in S_G\}$ is a partition of $V(G)$. If $G\in \{ C_7,T_{10}\}$, then by Proposition \ref{T10,P13,P14,Q13-Unmix}, $D$ satisfies {\rm (a)}. Now, if $\{ V(H)\mid H\in S_G\}$ is a partition of $V(G)$, then $G$ is an $SCQ$ graph with $C_G=Q_G=\emptyset$. Hence, by Theorem \ref{SCQ-char}, $D$ satisfies {\rm (b)}. \medskip

\noindent
$\Leftarrow )$ If $D$ satisfies {\rm (a)}, then by Theorem \ref{wellcovered-characterization1}, $G$ is well-covered. Consequently, by Corollary \ref{cor-oct30}, $I(D)$ is unmixed. Now, if $D$ satisfies {\rm (b)}, then $G$ is an $SCQ$ graph, with $C_G=Q_G=\emptyset$. Therefore, by {\rm (b)} and Theorem \ref{SCQ-char}, $I(D)$ is unmixed.  \qed
\end{proof}

\begin{corollary}
Let $D$ be a weighted oriented graph without isolated vertices and \linebreak $girth (G)\geqslant 6$. Hence, $I(D)$ is unmixed if and only if $D$ satisfies one of following pro\-per\-ties:
\begin{enumerate}[noitemsep]
\item[{\rm (a)}] $G=C_7$ and the vertices of $V^{+}$ are sinks.
\item[{\rm (b)}] $G$ has a perfect matching $e_1=\{ x_1,x_{1}^{\prime}\} ,\ldots ,e_r=\{ x_r,x_{r}^{\prime}\}$ where $deg_D(x_{1}^{\prime})=\cdots =deg_D(x_{r}^{\prime})=1$ and $(x_{j}^{\prime},x_j)\in E(D)$ if $x_j\in V^{+}$.
\end{enumerate}
\end{corollary}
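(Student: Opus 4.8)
The plan is to deduce this corollary from Theorem \ref{No4,5Cyc-Unmix} by specializing to the case ${\rm girth}(G)\geqslant 6$, exploiting the fact that such a $G$ automatically has no $4$- or $5$-cycles (and no $3$-cycles either), and then identifying the two alternatives (a) and (b) of Theorem \ref{No4,5Cyc-Unmix} with the two stated properties. Since $G$ has no isolated vertices, we may assume $G$ is connected by treating each connected component separately (the edge ideal of a disjoint union is unmixed iff each component's ideal is, and the cover number adds up); I would state this reduction first and then apply Theorem \ref{No4,5Cyc-Unmix} to each component.

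The first key step is the combinatorial translation of condition (b) of Theorem \ref{No4,5Cyc-Unmix} under the girth hypothesis. When ${\rm girth}(G)\geqslant 6$, a simplex $K=G[N_G[v]]$ being a complete graph forces $|V(K)|\leqslant 2$: three mutually adjacent vertices would form a $3$-cycle. Hence every simplex is either a single vertex or an edge. The requirement ``each vertex is in exactly one simplex of $D$'' then says precisely that the simplexes partition $V(G)$; the vertices lying in a $1$-simplex would be isolated (their only simplex being themselves), which is excluded, so in fact every simplex is a $2$-simplex, i.e.\ an edge $\{x_j, x_j'\}$ with $x_j'$ (the simplicial vertex generating it, or either endpoint) of degree $1$. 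This gives exactly a perfect matching $e_1,\dots,e_r$ of $G$ with one endpoint of each $e_j$ a leaf. The second key step is to unravel ``each simplex has no generating $\star$-semi-forest'' for a simplex consisting of a single edge $e=\{x_j,x_j'\}$ with $\deg_D(x_j')=1$: a generating $\star$-semi-forest of $e$ is a $\star$-semi-forest on the vertex set $\{x_j,x_j'\}$, and by the analysis already carried out inside the proof of Proposition \ref{prop-unmixed} (the connected case, where $H$ is a ROT on two vertices), the only obstruction arises from an oriented edge $(x_j',x_j)\in E(D)$ together with $x_j\in V^+$; conversely, if $x_j\in V^+$ then the source-convention of Remark \ref{rem-V-R-P} forces the edge to be oriented $(x_j',x_j)$, so no $\star$-semi-forest exists precisely when: $x_j\in V^+ \Rightarrow (x_j',x_j)\in E(D)$. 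I would verify both directions of this equivalence carefully, since it is the technical heart.

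Putting these together: condition (b) of Theorem \ref{No4,5Cyc-Unmix} becomes exactly condition (b) of the corollary. For condition (a) of the corollary, I note that among $\{C_7, T_{10}\}$ only $C_7$ has girth $\geqslant 6$ ($T_{10}$ contains a $3$-cycle, namely $(c_1,c_2,c_3)$), so ``$G\in\{C_7,T_{10}\}$ and $V^+\subseteq$ sinks'' collapses to ``$G=C_7$ and $V^+\subseteq$ sinks''. Finally I would observe that the two cases are genuinely the alternatives produced by Theorem \ref{wellcovered-characterization1} applied to a connected graph of girth $\geqslant 6$, so the ``if and only if'' is inherited directly.

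The main obstacle I anticipate is the second key step — pinning down exactly when a two-vertex simplex $e=\{x_j,x_j'\}$ with a degree-one vertex admits a generating $\star$-semi-forest, and checking this matches the clean orientation condition $(x_j',x_j)\in E(D)$ whenever $x_j\in V^+$. One must be careful with the degenerate ROT $V(T)=\{v\}$ case in Definition \ref{ROT}, with the placement of $x_j$ versus $x_j'$ as the parent, and with the interaction between $\widetilde H$, $W_1$, $W_2$ and the stable-set/$\star$-property conditions of Definition \ref{semi-forest}(iii); fortunately the relevant computation is essentially the ``$H$ is connected'' portion of the proof of Proposition \ref{prop-unmixed}, which can be cited or mimicked. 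Everything else is bookkeeping with the girth hypothesis ruling out short cycles.
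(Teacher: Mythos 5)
Your plan is correct, but it follows a genuinely different route from the paper's, so it is worth comparing. You obtain both implications by specializing Theorem~\ref{No4,5Cyc-Unmix}: girth $\geqslant 6$ excludes $T_{10}$ and forces every simplex to be a pendant edge, and you then translate ``each simplex has no generating $\star$-semi-forest'' into the orientation condition by characterizing the $\star$-semi-forests of a two-vertex pendant edge. The paper uses Theorem~\ref{No4,5Cyc-Unmix} only for the forward direction, and even there it never characterizes $\star$-semi-forests of an edge: from ``$e_i\not\subseteq \mathcal{C}$ for every strong cover'' (via Theorem~\ref{theorem-oct29}) it gets condition (1) of Proposition~\ref{prop-unmixed}, hence condition (2), and then the orientation follows because $(x_j,x_j')\in E(D)$ with $x_j\in V^{+}$ would give $N_D(x_j)\subseteq N_{D}^{+}(x_j)$, making $x_j$ a source and contradicting Remark~\ref{rem-V-R-P}. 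For the converse under (b), the paper bypasses $\star$-semi-forests and Theorem~\ref{No4,5Cyc-Unmix} altogether: it checks condition (2) of Proposition~\ref{prop-unmixed} for each matching edge, concludes $|\mathcal{C}\cap e_j|=1$ for every strong vertex cover, hence $|\mathcal{C}|=r$, and invokes Theorem~\ref{theorem42}(2). Your route buys a uniform ``specialize the theorem'' proof at the cost of the extra lemma you rightly flag as the technical heart: a pendant edge $\{x_j,x_j'\}$ with $deg_D(x_j')=1$ admits a generating $\star$-semi-forest if and only if $(x_j,x_j')\in E(D)$ and $x_j\in V^{+}$ (the parent cannot be $x_j'$ and the two-singleton case fails, since $W^{H}$ must lie outside $V(H)$ and $x_j'$ has no other neighbour; with parent $x_j$ one needs $x_j\in V^{+}$ and an external vertex for $W$, supplied by an in-neighbour of $x_j$ guaranteed by Remark~\ref{rem-V-R-P}). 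Your final formulation of this equivalence is the correct one, and it does make the translation of Theorem~\ref{No4,5Cyc-Unmix}(b) into condition (b) of the corollary work in both directions; also note that (2) of Proposition~\ref{prop-unmixed} is checkable here because property {\bf (P)} is automatic for an edge with a degree-one endpoint.

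Two small corrections. First, your intermediate sentence ``if $x_j\in V^{+}$ then the source-convention forces the edge to be oriented $(x_j',x_j)$'' is not true as stated: Remark~\ref{rem-V-R-P} only guarantees that $x_j$ has \emph{some} in-neighbour, and its actual role is to furnish the vertex $w_1\in W_1$ needed to build the ROT with parent $x_j$ when the edge is oriented $(x_j,x_j')$, which is how the contradiction (hence the forced orientation) is obtained. Second, the reduction to connected components does not literally return the stated dichotomy: a disjoint union of a $C_7$-component (with $V^{+}$ sinks) and a matched component satisfying (b) is unmixed but satisfies neither (a) nor (b) globally. This is really a gap in the statement rather than in your argument --- the paper's own proof also applies Theorem~\ref{No4,5Cyc-Unmix}, which assumes connectedness, and the closing remark of Section 5 treats disconnected graphs componentwise --- but you should either assume $G$ connected or phrase the conclusion per component rather than claim the reduction is harmless.
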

\begin{proof}
$\Leftarrow )$ If $D$ satisfies {\rm (a)}, then $I(D)$ is unmixed by {\rm (a)} in Theorem \ref{No4,5Cyc-Unmix}. Now, assume $D$ satisfies {\rm (b)}. Assume $b^{\prime}\in N_{D}^{+}(a)$ with $a\in V^{+}$ such that $\{b,b^{\prime}\}=e_j=\{ x_j,x_{j}^{\prime}\}$ for some $1\leqslant j\leqslant r$. If $b^{\prime}=x_{j}^{\prime}$, then $x_j=a\in V^{+}$, since $deg_D(x_{j}^{\prime})=1$. So, $x_{j}^{\prime}=b^{\prime}\in N_{D}^{+}(a)=N_{D}^{+}(x_j)$. But by hypothesis, $(x_{j}^{\prime},x_j)\in E(D)$, since $x_j\in V^{+}$. A contradiction, then $b^{\prime}=x_j$. Thus, $b=x_{j}^{\prime}$ implies $N_D(b)=\{ x_j\} =\{b^{\prime}\} \subseteq N_{D}^{+}(a)$. Hence, by Proposition \ref{prop-unmixed}, $|\mathcal{C}\cap e_j|=1$ where $\mathcal{C}$ is a strong vertex cover. So, $|\mathcal{C}|=r$. Therefore, by {\rm (2)} in Theorem \ref{theorem42}, $I(D)$ is unmixed. \medskip

\noindent
$\Rightarrow )$ $G\neq T_{10}$, since $T_{10}$ has $3$-cycles. Hence, by Theorem \ref{No4,5Cyc-Unmix}, $D$ satisfies {\rm (a)} or $\{ V(H)\mid H\in S_G\}$ is a partition of $V(G)$. Furthermore, $S_G\subseteq E(G)$, since $G$ has not isolated vertices and $girth (G)\geqslant 6$. Thus, we can assume $S_G=\{ e_1,\ldots ,e_r\}$ where $e_i=\{ x_i,x_{i}^{\prime}\}$ and $deg_D(x_{i}^{\prime})=1$ for $i=1,\ldots ,s$. Also, by Theorem \ref{No4,5Cyc-Unmix}, each $e_i$ has no generating $\star$-semi-forests. So, by Theorem \ref{theorem-oct29}, $e_i\not\subseteq \mathcal{C}$ for each strong vertex cover $\mathcal{C}$. Consequently, $|e_i\cap \mathcal{C}|=1$, since $\mathcal{C}$ is a vertex cover. Then, $e_i$ satisfies {\rm (2)} of Proposition \ref{prop-unmixed}. Now, suppose $(x_j,x_{j}^{\prime})\in E(D)$ and $x_j\in V^{+}$. We take $a=b=x_j$ and $b^{\prime}=x_{j}^{\prime}$, then by {\rm (2)} of Proposition \ref{prop-unmixed}, $N_D(x_j)=N_D(b)\subseteq N_{D}^{+}(a)=N_{D}^{+}(x_j)$. Hence, $x_j$ is a source. A contradiction, by Remark \ref{rem-V-R-P}, since $x_j\in V^{+}$. Therefore, $D$ satisfies {\rm (b)}.  \qed
\end{proof}

\begin{proposition}\label{P10-Unmix}
If $G=P_{10}$, then the following properties are equivalent:
\begin{enumerate}[noitemsep]
\item[{\rm (1)}] $I(D)$ is unmixed.
\item[{\rm (2)}] If $y\in V^{+}$ and $y$ is not a sink, then $y=d_1$ with $N_{D}^{+}(y)=\{ g_1,b_2\}$ or $y=d_2$ with $N_{D}^{+}(y)=\{ g_2,b_1\}$.
\end{enumerate}
\end{proposition}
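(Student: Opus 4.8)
The plan is to reduce, via part (3) of Theorem \ref{theorem42}, to a statement about $L_3$. Since the connected graph $P_{10}$ has no $3$- or $4$-cycles, it is well-covered by Theorem \ref{wellcovered-characterization2}, so $I(G)$ is unmixed (Remark \ref{1star}); hence $I(D)$ is unmixed if and only if $L_3(\mathcal{C})=\emptyset$ for every strong vertex cover $\mathcal{C}$ of $D$. I will use freely the automorphism $\sigma$ of $P_{10}$ that interchanges the subscripts $1$ and $2$ on every vertex (i.e.\ $a_1\leftrightarrow a_2$, $b_1\leftrightarrow b_2$, $c_1\leftrightarrow c_2$, $d_1\leftrightarrow d_2$, $g_1\leftrightarrow g_2$; one checks this respects the edge list of Figure \ref{specialgraphs}), noting that $\sigma$ interchanges the two configurations listed in $(2)$; and the fact that a vertex of $V^{+}$ is never a source (Remark \ref{rem-V-R-P}), so a non-sink vertex of $V^{+}$ has both an in-neighbour and an out-neighbour.

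For $(2)\Rightarrow(1)$ I would argue by contradiction. Suppose $\mathcal{C}$ is a strong vertex cover with some $x\in L_3(\mathcal{C})$. By Definition \ref{strong-cover-defn} there is $(y,x)\in E(D)$ with $y\in(\mathcal{C}\setminus L_1(\mathcal{C}))\cap V^{+}$, so $y$ is a non-sink of $V^{+}$; by $(2)$, after possibly applying $\sigma$, $y=d_1$ and $N_D^{+}(d_1)=\{g_1,b_2\}$, whence $x\in\{g_1,b_2\}$. Applying Definition \ref{strong-cover-defn} again to $x\in L_3(\mathcal{C})$ produces $(y',x)\in E(D)$ with $y'\in(\mathcal{C}\setminus L_1(\mathcal{C}))\cap V^{+}$; since $(d_1,x)\in E(D)$ we have $d_1\notin N_D^{-}(x)$, so $y'\neq d_1$, and because $N_D(g_1)\setminus\{d_1\}=\{a_1,c_1\}$ and $N_D(b_2)\setminus\{d_1\}=\{a_2\}$ we get $y'\in\{a_1,a_2,c_1\}$. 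But $y'\in V^{+}$ and $y'$ is not a sink (it has the out-edge to $x$), so $(2)$ would force $y'\in\{d_1,d_2\}$, a contradiction. Hence no strong vertex cover has nonempty $L_3$, and $I(D)$ is unmixed.

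For $(1)\Rightarrow(2)$, assume $I(D)$ is unmixed and let $y\in V^{+}$ be a non-sink with an out-neighbour $x$. I would first show $y\in\{d_1,d_2\}$: if $deg_D(x)=2$ then Lemma \ref{Deg-Unm} gives $(y,x)\in\{(d_1,b_2),(d_2,b_1)\}$; otherwise every out-neighbour of $y$ has degree $3$, and using $\sigma$ we may assume $y\in\{a_1,b_1,c_1,g_1\}$. In each of these four cases, Lemma \ref{Deg-Unm} first rules out a degree-$2$ out-neighbour, which pins down $N_D^{+}(y)$ and an in-neighbour $z$ of $y$, and then one exhibits a stable set of the form $\{z,x,z_1,\ldots,z_s\}$ as required by Lemma \ref{StableSet} (for instance $y=a_1$ forces $(b_1,a_1),(a_1,g_1)\in E(D)$ and $\{b_1,g_1,b_2,c_2\}$ is stable), so Lemma \ref{StableSet} would make $I(D)$ mixed --- a contradiction. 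Thus $y\in\{d_1,d_2\}$, and by $\sigma$ we may take $y=d_1$. It remains to determine $N_D^{+}(d_1)$: since $d_1$ is not a source, $N_D^{+}(d_1)$ is a nonempty proper subset of $\{d_2,b_2,g_1\}$, and every such subset other than $\{g_1,b_2\}$ either contains $d_2$ (so $(d_1,d_2)\in E(D)$) or is a singleton; in each of these subcases Lemma \ref{StableSet}, applied with $x=d_2$ or $x=b_2$ and a suitable small stable set (such as $\{g_1,b_2,g_2\}$ or $\{b_2,d_2,a_1,c_2\}$, depending on the orientations of $\{d_1,g_1\}$ and $\{d_1,b_2\}$), again shows that $I(D)$ is mixed. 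Therefore $N_D^{+}(d_1)=\{g_1,b_2\}$; the symmetric analysis for $y=d_2$ gives $N_D^{+}(d_2)=\{g_2,b_1\}$, and these two configurations cannot occur simultaneously since they demand opposite orientations of the edge $\{d_1,d_2\}$, so $(2)$ holds.

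The genuinely laborious part is the finite case check inside $(1)\Rightarrow(2)$: for each forbidden orientation/weight pattern on $P_{10}$ one must produce a stable set meeting the hypotheses of Lemma \ref{StableSet}, and the delicate point is that in several patterns only a particular choice of the in-neighbour $z$ of $y$ (or of which out-neighbour $x$ to feed into Lemma \ref{StableSet}) yields a set that is actually stable. I expect this bookkeeping --- together with checking that the case list is exhaustive once $\sigma$ has been used to cut it down --- to be the main obstacle; everything else is a short application of Theorem \ref{theorem42}, Definition \ref{strong-cover-defn}, and Lemmas \ref{Deg-Unm} and \ref{StableSet}.
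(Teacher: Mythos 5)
Your argument for $(2)\Rightarrow(1)$ has a genuine gap. The step ``since $(d_1,x)\in E(D)$ we have $d_1\notin N_D^{-}(x)$, so $y'\neq d_1$'' is false: by the definition of in-neighbourhood, $(d_1,x)\in E(D)$ means precisely $d_1\in N_D^{-}(x)$, and Definition \ref{strong-cover-defn} only asserts the \emph{existence} of some $(y',x)\in E(D)$ with $y'\in(\mathcal{C}\setminus L_1(\mathcal{C}))\cap V^{+}$ --- nothing prevents that witness from being $d_1$ itself. Indeed $d_1$ is a perfectly legitimate witness (it can lie in $L_2(\mathcal{C})$, e.g.\ when $d_2\notin\mathcal{C}$), so re-applying the definition to $x$ produces no contradiction, and your proof of this direction collapses. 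Ruling out $x\in L_3(\mathcal{C})$ genuinely requires a more global analysis of $\mathcal{C}$, which is what the paper does: from $(2)$ every witness lies in $\{d_1,d_2\}$, so $L_3(\mathcal{C})\subseteq\{b_1,b_2,g_1,g_2\}$; taking $y=d_1$, one has $N_D^{+}[d_1]=\{d_1,g_1,b_2\}\subseteq\mathcal{C}$ but $d_1\notin L_3(\mathcal{C})$, which forces $d_2\notin\mathcal{C}$ and hence $N_D(d_2)=\{b_1,d_1,g_2\}\subseteq\mathcal{C}$; then $N_D(a_1)=\{b_1,g_1\}$ and $N_D(a_2)=\{b_2,g_2\}$ are contained in $\mathcal{C}$, so $a_1,a_2\notin\mathcal{C}$ (otherwise they would lie in $L_3(\mathcal{C})\subseteq\{b_1,b_2,g_1,g_2\}$), contradicting $N_D[x]\subseteq\mathcal{C}$ because $a_1\in N_D(g_1)$ and $a_2\in N_D(b_2)$. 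None of this chain is present in, or replaceable by, your two-witness shortcut.

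For $(1)\Rightarrow(2)$ your route does differ from the paper's: you propose to settle every forbidden configuration with Lemma \ref{StableSet}, whereas the paper uses Lemma \ref{Deg-Unm} plus Lemma \ref{StableSet} only for part of the reduction and otherwise exhibits explicit strong vertex covers ($\mathcal{C}_1,\mathcal{C}_2,\mathcal{C}_1',\mathcal{C}_2',\mathcal{C}_3'$) with $L_3\neq\emptyset$ to contradict Theorem \ref{theorem42}. Your alternative is viable --- for instance $y=a_1$ yields the stable set $\{b_1,g_1,b_2,c_2\}$, and the subcases $N_D^{+}(d_1)\ni d_2$ and $N_D^{+}(d_1)\in\{\{g_1\},\{b_2\}\}$ are handled by $\{b_2,d_2,a_1,c_2\}$ and $\{g_1,b_2,g_2\}$ respectively, with the choice of $z$ mattering exactly as you warn (e.g.\ for $y=g_1$ one must take $z=c_1$, not $a_1$) --- but you have only sketched it: the exhaustive verification you yourself identify as the hard part is not carried out, so even this direction is incomplete as written, though it is on solid ground. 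The decisive defect remains the $(2)\Rightarrow(1)$ argument above.
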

\begin{proof}
${\rm (2)} \Rightarrow {\rm (1)}$ Let $\mathcal{C}$ be a strong vertex cover of $D$. Suppose $x\in L_3(\mathcal{C})$. Then, there is $y\in \big( \mathcal{C}\setminus L_1(\mathcal{C})\big) \cap V^{+}$ such that $x\in N_{D}^{+}(y)$. Thus, by {\rm (2)}, $y\in \{d_1,d_2\}$ and $x\in N_{D}^{+}(y)\subseteq \{ b_1,b_2,g_1,g_2\}$. Hence, $L_3(\mathcal{C})\subseteq \{ b_1,b_2,g_1,g_2\}$. By symmetry of $P_{10}$, we can assume $y=d_1$. Then by {\rm (2)}, $x\in N_{D}^{+}(y)= \{ g_1,b_2\}$. Also, $\{ g_1,d_1,b_2\} =N_{D}^{+}[y]\subseteq \mathcal{C}$, since $y\in \mathcal{C}\setminus L_1(\mathcal{C})$. But $y=d_1\notin L_3(\mathcal{C})$, then $N_D(y)\not\subseteq \mathcal{C}$. Thus, $d_2\notin \mathcal{C}$. So, by Remark \ref{einC}, $\{ b_1,d_1,g_2\} =N_D(d_2)\subseteq \mathcal{C}$. Furthermore, $\{ a_1,a_2\} \cap L_3(\mathcal{C})=\emptyset$, since $L_3(\mathcal{C})\subseteq \{ b_1,b_2,g_1,g_2\}$. Consequently, $\{ a_1,a_2\} \cap \mathcal{C}=\emptyset$, since $N_D(a_1,a_2)=\{ g_1,b_1,g_2,b_2\} \subseteq \mathcal{C}$. But, $x\in \{ g_1,b_2\} \cap L_3(\mathcal{C})$, then $a_1\in N_D(g_1)\subseteq \mathcal{C}$ or $a_2\in N_D(b_2)\subseteq \mathcal{C}$. Hence, $\{ a_1,a_2\} \cap \mathcal{C}\neq \emptyset$. A contradiction, then $L_3(\mathcal{C})=\emptyset$. Also, by Theorem \ref{wellcovered-characterization1} and Remark \ref{1star}, $I(G)$ is unmixed. Therefore, by {\rm (3)} in Theorem \ref{theorem42}, $I(D)$ is unmixed. \medskip

\noindent
${\rm (1)} \Rightarrow {\rm (2)}$ We take $V_2:=\{ a\in V(G)\mid deg_G(a)=2\}$ and $y\in V^{+}$, such that $y$ is not a sink, then there is $(y,x)\in E(D)$. Also, by Remark \ref{rem-V-R-P}, there is $z\in N_{D}^{-}(y)$. We will prove $y\in \{ d_1,d_2\}$. If $deg_D(x)=2$, then by Lemma \ref{Deg-Unm}, $y\in \{ d_1,d_2\}$. Now, we assume $deg_D(x)\geqslant 3$, then by symmetry of $P_{10}$, we can suppose $x\in \{ g_1,d_1\}$. 

\noindent
First suppose $x=g_1$. Thus, $y\in N_D(g_1)=\{ a_1,c_1,d_1\}$. If $y\in \{ a_1,c_1\}$, then $deg_D(y)=2$ and $y\in \mathcal{C}_1=\{ a_1,a_2,c_1,d_1,d_2,g_1,g_2\}$ is a vertex cover of $D$ with $L_3(\mathcal{C}_1)=\{ x\}$. Hence, $N_D(y)=\{ x,z\}$ implies $N_{D}^{+}(y)=\{ x\} =\{ g_1\} \subseteq \mathcal{C}_1$. Consequently, $y\in \mathcal{C}_1\setminus L_1(\mathcal{C}_1)$. Hence, $\mathcal{C}_1$ is strong, since $L_3(\mathcal{C}_1)=\{ x\}$. A contradiction, by Theorem \ref{theorem42}, then $y=d_1$. 

\noindent
Now, suppose $x=d_1$. Then, $\mathcal{C}_2=\{ a_1,b_2,c_2,d_1,d_2,g_1,g_2\}$  is a vertex cover of $D$ with $L_3(\mathcal{C}_2)=\{ x\}$. Also, $y\in N_D(x)=\{ g_1,b_2,d_2\}$. If $y\in \{ g_1,b_2\}$, then $N_D(y)\setminus \{ d_1\} \subseteq N_D(g_1,b_2)\setminus \{ d_1\} =\{ a_1,a_2,c_1\} \subseteq V_2$. Hence, by Lemma \ref{Deg-Unm}, $N_D(y)\setminus \{d_1\} \subseteq N_{D}^{-}(y)$, since $y\notin \{ b_1,b_2\}$. Thus, $N_{D}^{+}(y)=\{ d_1\} =\{ x\} \subseteq \mathcal{C}_2$ implying $y\in \mathcal{C}_2\setminus L_1(\mathcal{C}_2)$. Consequently, $\mathcal{C}_2$ is strong with $L_3(\mathcal{C}_2)\neq \emptyset$. A contradiction, by Theorem \ref{theorem42}, then $y=d_2$. 

\noindent
Therefore $y\in \{ d_1,d_2\}$. By symmetry of $P_{10}$, we can assume $y=d_1$. Now, we will prove $N_{D}^{+}(y)=\{ g_1,b_2\}$. By contradiction, in each one of the following cases, we give a strong vertex cover $\mathcal{C}^{\prime}$ with $L_3(\mathcal{C}^{\prime})\neq \emptyset$, since $I(D)$ is unmixed and $z\in N_{D}^{-}(y)$. \smallskip

\noindent
{\bf Case (1)} $g_1\notin N_{D}^{+}(d_1)$ and $b_2\in N_{D}^{+}(d_1)$. So, $N_{D}^{+}(d_1)\subseteq \{ b_2,d_2\}$ and $\mathcal{C}_{1}^{\prime}=$ \linebreak $\{ a_1,a_2,b_2,c_1,c_2,d_1,d_2\}$ is a vertex cover of $D$ with $L_3(\mathcal{C}_{1}^{\prime})=\{ b_2\}$. Also, $(d_1,b_2)\in E(D)$ and $y=d_1\in \big(\mathcal{C}_{1}^{\prime}\setminus L_1(\mathcal{C}_{1}^{\prime})\big) \cap V^{+}$, since $b_2\in N_{D}^{+}[d_1]\subseteq \{ d_1,b_2,d_2\} \subset \mathcal{C}_{1}^{\prime}$. Hence, $\mathcal{C}_{1}^{\prime}$ is a strong vertex cover of $D$. \smallskip 

\noindent
{\bf Case (2)} $b_2\notin N_{D}^{+}(d_1)$ and $g_1\in N_{D}^{+}(d_1)$. Then, $N_{D}^{+}(d_1)\subseteq \{ g_1,d_2\}$ and $\mathcal{C}_{2}^{\prime}=$ \linebreak $\{ a_1,a_2,c_1,d_1,d_2,g_1,g_2\}$ is a vertex cover of $D$ with $L_3(\mathcal{C}_{2}^{\prime})=\{ g_1\}$. Furthermore $(d_1,g_1)\in E(D)$ and $d_1=y \in \big(\mathcal{C}_{2}^{\prime}\setminus L_1(\mathcal{C}_{2}^{\prime})\big) \cap V^{+}$, since $g_1\in N_{D}^{+}[d_1]\subseteq \{ d_1,g_1,d_2\}\subset \mathcal{C}_{2}^{\prime}$. Hence, $\mathcal{C}_{2}^{\prime}$ is a strong vertex cover of $D$. \smallskip

\noindent
{\bf Case (3)} $b_2,g_1\notin N_{D}^{+}(d_1)$. Thus, $z=d_1$, $N_{D}^{+}(d_1)=\{ d_2\}$ and $\mathcal{C}_{3}^{\prime}=$ \linebreak $\{ a_2,b_1,c_1,d_1,d_2,g_1,g_2\}$ is a vertex cover of $D$ with $L_6(\mathcal{C}_{3}^{\prime})=\{ d_2\}$. Also, $(d_1,d_2)\in E(D)$ and $d_1=y\in \big(\mathcal{C}_{3}^{\prime}\setminus L_1(\mathcal{C}_{3}^{\prime})\big) \cap V^{+}$, since $N_{D}^{+}[d_1]=\{ d_1,d_2\}\subset \mathcal{C}_{3}^{\prime}$. Hence, $\mathcal{C}_{3}^{\prime}$ is a strong vertex cover of $D$. \qed 
\end{proof}

\begin{theorem}\label{No3,4,5Cyc-Unmix}
Let $D=(G,\mathcal{O},w)$ be a connected weighted oriented graph, with \linebreak ${\rm girth}(G)\geqslant 5$. Hence, $I(D)$ is unmixed if and only if $D$ satisfies one of the following properties:
\begin{enumerate}[noitemsep]
\item[{\rm (a)}] $G\in \{K_1,C_7,Q_{13},P_{13},P_{14}\}$ and the vertices of $V^{+}$ are sinks.
\item[{\rm (b)}] $G=P_{10}$, furthermore if $y$ is not a sink in $V^{+}$, then $y=d_1$ with $N_{D}^{+}(y)=\{ g_1,b_2\}$ or $y=d_2$ with $N_{D}^{+}(y)=\{ g_2,b_1\}$.
\item[{\rm (c)}] Each vertex is in exactly one simplex of $G$ or in exactly one basic $5$-cycle of $G$. Furthermore, each simplex of $D$ has not a generating $\star$-semi-forest and each basic $5$-cycle of $D$ has the $\star$-property.
\end{enumerate}
\end{theorem}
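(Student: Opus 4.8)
The plan is to deduce the statement entirely from the structural theorems already established, organising the argument around the dichotomy of Theorem \ref{wellcovered-characterization2}. Observe first that ${\rm girth}(G)\geqslant 5$ means $G$ has neither $3$-cycles nor $4$-cycles, so Theorem \ref{wellcovered-characterization2} applies to $G$ as soon as we know $G$ is well-covered.

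\textbf{Necessity.} Assume $I(D)$ is unmixed. By {\rm (3)} of Theorem \ref{theorem42} together with Remark \ref{1star}, $G$ is well-covered, so by Theorem \ref{wellcovered-characterization2} either $G\in\{K_1,C_7,P_{10},P_{13},P_{14},Q_{13}\}$, or $\{V(H)\mid H\in S_G\cup C_G\}$ is a partition of $V(G)$. I would split into three cases. If $G=P_{10}$, then Proposition \ref{P10-Unmix} yields at once the description of the non-sink vertices of $V^{+}$, which is exactly {\rm (b)}. If $G\in\{K_1,C_7,P_{13},P_{14},Q_{13}\}$, then either $G=K_1$, in which case its unique vertex has no out-neighbour and hence is a sink, or $G\in\{C_7,P_{13},P_{14},Q_{13}\}$, in which case Proposition \ref{T10,P13,P14,Q13-Unmix} forces every vertex of $V^{+}$ to be a sink; in both situations {\rm (a)} holds. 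Finally, if $\{V(H)\mid H\in S_G\cup C_G\}$ is a partition of $V(G)$, then taking $Q_G=\emptyset$ exhibits $G$ as an $SCQ$ graph, so Theorem \ref{SCQ-char} gives that every basic $5$-cycle of $D$ has the $\star$-property and every simplex of $D$ has no generating $\star$-semi-forest; since the partition condition says precisely that each vertex of $G$ lies in exactly one simplex or exactly one basic $5$-cycle, this is {\rm (c)}.

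\textbf{Sufficiency.} Conversely, suppose one of {\rm (a)}, {\rm (b)}, {\rm (c)} holds. If {\rm (a)} holds, then $G$ occurs in the list of Theorem \ref{wellcovered-characterization2}, hence is well-covered, and $V^{+}$ consists of sinks, so $I(D)$ is unmixed by Corollary \ref{cor-oct30}. If {\rm (b)} holds, $I(D)$ is unmixed by the implication ${\rm (2)}\Rightarrow{\rm (1)}$ of Proposition \ref{P10-Unmix}. If {\rm (c)} holds, then the partition hypothesis makes $G$ an $SCQ$ graph with $C_G$ equal to the set of its basic $5$-cycles and $Q_G=\emptyset$; conditions {\rm (a)} and {\rm (b)} of Theorem \ref{SCQ-char} are exactly what {\rm (c)} assumes, while condition {\rm (c)} of Theorem \ref{SCQ-char} is vacuous since $Q_G=\emptyset$, so Theorem \ref{SCQ-char} gives that $I(D)$ is unmixed.

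\textbf{Main obstacle.} Because the substantive work is already contained in the earlier propositions, the only genuine care needed is in matching the (mildly overlapping) case list of Theorem \ref{wellcovered-characterization2} against the three complementary alternatives {\rm (a)}--{\rm (c)}. The points to watch are: $P_{10}$ behaves differently from the other sporadic well-covered graphs, since it can carry nontrivial weights on non-sinks and still stay unmixed (this is precisely Proposition \ref{P10-Unmix}), whereas $C_7,P_{13},P_{14},Q_{13}$ force $V^{+}$ into the sinks; $K_1$ must be handled by hand as it is not covered by Proposition \ref{T10,P13,P14,Q13-Unmix}; and the "partition" alternative must be recognised as exactly the $SCQ$ situation with $Q_G=\emptyset$, so that Theorem \ref{SCQ-char} applies with its third condition automatically satisfied.
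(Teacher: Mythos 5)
Your proposal is correct and follows essentially the same route as the paper: well-coveredness via Theorem \ref{theorem42} and Remark \ref{1star}, the dichotomy of Theorem \ref{wellcovered-characterization2}, and then Proposition \ref{P10-Unmix}, Proposition \ref{T10,P13,P14,Q13-Unmix}, Theorem \ref{SCQ-char} (with $Q_G=\emptyset$) and Corollary \ref{cor-oct30} for the respective cases. The only cosmetic difference is the treatment of $K_1$, where the paper invokes Remark \ref{rem-V-R-P} to get $V^{+}=\emptyset$ while you observe the lone vertex is a sink; both settle that case equally well.
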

\begin{proof}
$\Rightarrow )$ By {\rm (3)} in Theorem \ref{theorem42} and Remark \ref{1star}, $G$ is well-covered. By Theorem \ref{wellcovered-characterization2}, $G\in \{ K_1,C_7,P_{10},P_{13},P_{14},Q_{13}\}$ or $\{ V(H)\mid H\in S_G\cup C_G\}$ is a partition of $V(G)$. If $\{ V(H)\mid H\in S_G\cup C_G\}$ is a partition of $V(G)$, then $G$ is an $SCQ$ graph with $Q_G=\emptyset$. Hence, by Theorem \ref{SCQ-char}, $D$ satisfies {\rm (c)}. Now, if $G=P_{10}$, then by Proposition \ref{P10-Unmix}, $D$ satisfies {\rm (b)}. Furthermore, if $G\in \{ C_7,Q_{13},P_{13},P_{14}\}$, then by Proposition \ref{T10,P13,P14,Q13-Unmix}, $D$ sa\-tis\-fies {\rm (a)}. Finally, if $G=K_1$, then by Remark \ref{rem-V-R-P}, $V^{+}=\emptyset$. \medskip

\noindent
$\Leftarrow )$ If $D$ satisfies {\rm (b)}, then by Proposition \ref{P10-Unmix}, $I(D)$ is unmixed. Now, if $D$ satisfies {\rm (c)}, then $G$ is an $SCQ$ graph with $Q_G=\emptyset$. Consequently, by Theorem \ref{SCQ-char}, $I(D)$ is unmixed. Finally, if $D$ satisfies {\rm (a)}, then by Theorem \ref{wellcovered-characterization2}, $G$ is well-covered. Therefore, by Corollary \ref{cor-oct30}, $I(D)$ is unmixed.   \qed
\end{proof}

\begin{remark}\rm
A graph is well-covered if and only if each connected component is well-covered. Hence, when $D$ is no connected in Theorem \ref{No4,5Cyc-Unmix} (resp. \ref{No3,4,5Cyc-Unmix}), $I(D)$ is unmixed if and only if each connected component of $D$ satisfies {\rm (a)} or {\rm (b)} (resp. {\rm (a)}, {\rm (b)} or {\rm (c)}). 
\end{remark}

\section{Examples}
 
\begin{example}\rm
Let $D_1$ be the following weighted oriented graph. 
\end{example}
\begin{minipage}[c]{8cm} 
\centering
\begin{tikzpicture}[dot/.style={draw,fill,circle,inner sep=1pt},scale=0.8]

%%%%%%%%%%%%%%%%%%%%%%%%%%%%%%%%%%%%%%%%%%%%%%%

\node[draw,fill,circle,inner sep=1pt] (1) at (-7,1){};
\node (101) at (-7.25,1.1){{\small $v_{\small 1}$}};
\node (201) at (-7.2,0.8){{\small $1$}};
\node[draw,fill,circle,inner sep=1pt](2) at (-7,-1) {};
\node (102) at (-7.3,-.9){{\small $w_{\small 1}$}};
\node (202) at (-7.25,-1.2){{\small $2$}};

\path [draw,postaction={on each segment={mid arrow=thick}}] 
(1) to (2);

%%%%%%%%%%%%%%%%%%%%%%%%%%%%%%%%%%%%%%%%%

\node[draw,fill,circle,inner sep=1pt] (8) at (-5,1){};
\node (308) at (-4.85,1.2){{\small $2$}};
\node[draw,fill,circle,inner sep=1pt] (9) at (-5,0) {};
\node (309) at (-4.75,.1){{\small $2$}};
\node[draw,fill,circle,inner sep=1pt](5) at (-6,-1) {};
\node (305) at (-6.15,-1.2){{\small $1$}};
\node[draw,fill,circle,inner sep=1pt] (10) at (-5,-1){};
\node (310) at (-4.85,-1.2){{\small $2$}};
\node[draw,fill,circle,inner sep=1pt] (3) at (-6,1){};
\node (303) at (-6.15,1.2){{\small $2$}};
\node[draw,fill,circle,inner sep=1pt] (4) at (-6,0) {};
\node (304) at (-6.25,.1){{\small $2$}};
\node[draw,fill,circle,inner sep=1pt] (11) at (-4,1){};
\node (311) at (-4.15,1.2){{\small $1$}};;\node[draw,fill,circle,inner sep=1pt] (12) at (-4,0) {};
\node (312) at (-4.25,.1){{\small $1$}};
\node[draw,fill,circle,inner sep=1pt] (13) at (-4,-1){};
\node (313) at (-4.15,-1.2){{\small $2$}};
\node[draw,fill,circle,inner sep=1pt] (7) at (-5.5,-2){};
\node (307) at (-5.15,-1.9){{\small $2$}};
\node[draw,fill,circle,inner sep=1pt] (6) at (-5.5,2){};
\node (306) at (-5.5,2.3){{\small $2$}};

\path [draw,postaction={on each segment={mid arrow=thick}}] (2) to (5);

\path [draw,very thick,postaction={on each segment={mid arrow=very thick}}]
(3) to (1)
(6) to (8)
(8) to (9)
(10) to (13)
(8) to (11)
(3) to (6)
(4) to (3)
(9) to (4)
(7) to (5)
(10) to (7)
(9) to (10);

\path [draw,very thick] 
(4) to (5)
(11) to (12)
(12) to (13);

%%%%%%%%%%%%%%%%%%%%%%%%%%%%%%%%%%%%

\node[draw,fill,circle,inner sep=1pt] (16) at (-3,1){};

\node (116) at (-3.3,1){{\small $v_{\small 2}$}};
\node (216) at (-2.85,1.2){{\small $2$}};
\node[draw,fill,circle,inner sep=1pt] (17) at (-3,0) {};
\node (317) at (-2.75,.1){{\small $2$}};
\node[draw,fill,circle,inner sep=1pt] (18) at (-3,-1){};
\node (318) at (-2.85,-1.2){{\small $2$}};
\node[draw,fill,circle,inner sep=1pt] (19) at (-2,1){};
\node (119) at (-1.65,1){{\small $w_{\small 2}$}};
\node (219) at (-2.15,1.2){{\small $2$}};
\node[draw,fill,circle,inner sep=1pt] (20) at (-2,0) {};
\node (320) at (-2.25,.1){{\small $1$}};
\node[draw,fill,circle,inner sep=1pt] (21) at (-2,-1){};
\node (321) at (-2.15,-1.2){{\small $2$}};
\node[draw,fill,circle,inner sep=1pt] (15) at (-3.5,-2){};
\node (315) at (-3.15,-1.9){{\small $1$}};
\node[draw,fill,circle,inner sep=1pt] (14) at (-3.5,2){};
\node (314) at (-3.5,2.3){{\small $2$}};

\path [draw,postaction={on each segment={mid arrow=thick}}]
(19) to (16)
(20) to (19);

\path [draw,very thick,postaction={on each segment={mid arrow=very thick}}]
(16) to (14)
(16) to (17)
(18) to (21)
(11) to (14)
(17) to (12)
(13) to (15)
(18) to (15)
(17) to (18)
(21) to (20);

%%%%%%%%%%%%%%%%%%%%%%%%%%%%%%%%%%%%%%%%%%%%%%%

\node[draw,fill,circle,inner sep=1pt] (24) at (-1,1){};
\node (324) at (-.85,1.2){{\small $2$}};
\node[draw,fill,circle,inner sep=1pt] (25) at (-1,0) {};
\node (325) at (-.75,.1){{\small $2$}};
\node[draw,fill,circle,inner sep=1pt] (26) at (-1,-1){};
\node (326) at (-.85,-1.2){{\small $2$}};
\node[draw,fill,circle,inner sep=1pt] (27) at (0,1){};
\node (327) at (-.15,1.2){{\small $1$}};
\node[draw,fill,circle,inner sep=1pt] (28) at (0,0) {};
\node (328) at (-.25,.1){{\small $2$}};
\node[draw,fill,circle,inner sep=1pt] (29) at (0,-1){};
\node (129) at (.35,-1){{\small $w_{\small 5}$}};
\node (229) at (-.15,-1.2){{\small $1$}};
\node[draw,fill,circle,inner sep=1pt] (23) at (-1.5,-2){};
\node (323) at (-1.15,-1.9){{\small $1$}};
\node[draw,fill,circle,inner sep=1pt] (22) at (-1.5,2){};
\node (122) at (-1.15,2){{\small $v_{\small 3}$}};
\node (222) at (-1.5,2.3){{\small $2$}};

\path [draw,postaction={on each segment={mid arrow=thick}}]
(19) to (22)
(29) to (26)
(28) to (29);

\path [draw,very thick,postaction={on each segment={mid arrow=very thick}}]
(22) to (24)
(24) to (25)
(24) to (27)
(25) to (20)
(26) to (23)
(25) to (26);

\path [draw,very thick]
(23) to (21)
(27) to (28);

%%%%%%%%%%%%%%%%%%%%%%%%%%%%%%%%%%%%%%%%%%

\node[draw,fill,circle,inner sep=1pt] (32) at (1,1){};
\node (132) at (.75,1){{\small $v_{\small 4}$}};
\node (232) at (1.15,1.2){{\small $2$}};
\node[draw,fill,circle,inner sep=1pt] (33) at (1,0) {};
\node (333) at (1.25,.1){{\small $2$}};
\node[draw,fill,circle,inner sep=1pt] (34) at (1,-1){};
\node (334) at (1.15,-1.2){{\small $2$}};
\node[draw,fill,circle,inner sep=1pt] (35) at (2,1){};
\node (135) at (2.3,1.1){{\small $w_{\small 4}$}};
\node (235) at (2.25,0.8){{\small $2$}};
\node[draw,fill,circle,inner sep=1pt] (36) at (2,-1){};
\node (336) at (2.25,-.9){{\small $2$}};
\node[draw,fill,circle,inner sep=1pt] (31) at (.5,-2){};
\node (131) at (.5,-2.25){{\small $v_{\small 5}$}};
\node (231) at (.85,-1.9){{\small $2$}};
\node[draw,fill,circle,inner sep=1pt] (30) at (.5,2){};
\node (330) at (.5,2.3){{\small $2$}};

\path [draw,postaction={on each segment={mid arrow=thick}}]
(29) to (31)
(35) to (32)
(36) to (35);

\path [draw,very thick,postaction={on each segment={mid arrow=very thick}}]
(32) to (30)
(32) to (33)
(34) to (36)
(33) to (28)
(31) to (34);

\path [draw,very thick]
(27) to (30)
(33) to (34);

%%%%%%%%%%%%%%%%%%%%%%%%%%%%%
\end{tikzpicture}

\centering
\begin{tikzpicture}[dot/.style={draw,fill,circle,inner sep=1pt},scale=0.8]

%%%%%%%%%%%%%%%%%%%%%%%%%%%%%%%%%%%%%%%%%%%%%%%

\node[draw,fill,circle,inner sep=1pt] (1) at (-7,1){};
\node (101) at (-7.25,0.75){{\small $\mathbf{v_{\small 1}}$}};
\node (201) at (-7.25,1.1){{\small $1$}};
\node (T1) at (-7,-2.45){$\mathbf{T_1}$}; 
\node[draw,fill,circle,inner sep=1pt](2) at (-7,-1) {};
\node (102) at (-7.3,-.9){{\small $w_{\small 1}$}};
\node (202) at (-7.25,-1.2){{\small $2$}};

%%%%%%%%%%%%%%%%%%%%%%%%%%%%%%%%%%%%%%%%%

\node[draw,fill,circle,inner sep=1pt] (8) at (-5,1){};
\node (308) at (-4.8,1.3){{\small $2$}};
\node[draw,fill,circle,inner sep=1pt] (9) at (-5,0) {};
\node (309) at (-4.75,.1){{\small $2$}};
\node[draw,fill,circle,inner sep=1pt](5) at (-6,-1) {};
\node (305) at (-6.15,-1.2){{\small $1$}};
\node[draw,fill,circle,inner sep=1pt] (10) at (-5,-1){};
\node (310) at (-4.85,-1.25){{\small $2$}};
\node[draw,fill,circle,inner sep=1pt] (3) at (-6,1){};
\node (303) at (-6.25,1){{\small $2$}};
\node[draw,fill,circle,inner sep=1pt] (4) at (-6,0) {};
\node (304) at (-6.25,.1){{\small $2$}};
\node[draw,fill,circle,inner sep=1pt] (11) at (-4,1){};
\node (311) at (-4,1.3){{\small $1$}};\node[draw,fill,circle,inner sep=1pt] (12) at (-4,0) {};
\node (312) at (-4.25,.1){{\small $1$}};
\node[draw,fill,circle,inner sep=1pt] (13) at (-4,-1){};
\node (313) at (-4,-1.3){{\small $2$}};
\node[draw,fill,circle,inner sep=1pt] (7) at (-5.5,-2){};
\node (307) at (-5.15,-1.9){{\small $2$}};
\node[draw,fill,circle,inner sep=1pt] (6) at (-5.5,2){};
\node (306) at (-5.5,2.3){{\small $2$}};
\node (B1) at (-5.5,-2.45){$\mathbf{B_1}$};

\path [draw,very thick,postaction={on each segment={mid arrow=very thick}}]
(6) to (8)
(8) to (9)
(10) to (13)
(8) to (11)
(3) to (6)
(4) to (3)
(9) to (4)
(7) to (5)
(10) to (7)
(9) to (10);

\path [draw,dashed,postaction={on each segment={mid arrow=thick}}]
(1) to (2);

%%%%%%%%%%%%%%%%%%%%%%%%%%%%%%%%%%%%

\node[draw,fill,circle,inner sep=1pt] (16) at (-3,1){};
\node (116) at (-3.35,1){{\small $\mathbf{v_{\small 2}}$}};
\node (216) at (-2.8,1.25){{\small $2$}};
\node[draw,fill,circle,inner sep=1pt] (17) at (-3,0) {};
\node (317) at (-2.75,.1){{\small $2$}};
\node[draw,fill,circle,inner sep=1pt] (18) at (-3,-1){};
\node (318) at (-2.85,-1.25){{\small $2$}};
\node[draw,fill,circle,inner sep=1pt] (19) at (-2,1){};
\node (119) at (-2,.75){{\small $w_{\small 2}=w_{\small 3}$}};
\node (219) at (-2.15,1.25){{\small $2$}};
\node[draw,fill,circle,inner sep=1pt] (20) at (-2,0) {};
\node (320) at (-2.25,.1){{\small $1$}};
\node[draw,fill,circle,inner sep=1pt] (21) at (-2,-1){};
\node (321) at (-2,-1.3){{\small $2$}};
\node[draw,fill,circle,inner sep=1pt] (15) at (-3.5,-2){};
\node (315) at (-3.15,-1.9){{\small $1$}};
\node[draw,fill,circle,inner sep=1pt] (14) at (-3.5,2){};
\node (314) at (-3.5,2.3){{\small $2$}};
\node (T2) at (-3,-2.45){$\mathbf{T_2}$};

\path [draw,very thick,postaction={on each segment={mid arrow=very thick}}]
(16) to (14)
(16) to (17)
(18) to (21)
(17) to (12)
(18) to (15)
(17) to (18);

\path [draw,dashed,postaction={on each segment={mid arrow=thick}}]
(19) to (16);

%%%%%%%%%%%%%%%%%%%%%%%%%%%%%%%%%%%%%%%%%%%%%%%

\node[draw,fill,circle,inner sep=1pt] (24) at (-1,1){};
\node (324) at (-.8,1.3){{\small $2$}};
\node[draw,fill,circle,inner sep=1pt] (25) at (-1,0) {};
\node (325) at (-.75,.1){{\small $2$}};
\node[draw,fill,circle,inner sep=1pt] (26) at (-1,-1){};
\node (326) at (-.75,-1){{\small $2$}};
\node[draw,fill,circle,inner sep=1pt] (27) at (0,1){};
\node (327) at (0,1.3){{\small $1$}};
\node[draw,fill,circle,inner sep=1pt] (28) at (0,0) {};
\node (328) at (0,-.3){{\small $2$}};
\node[draw,fill,circle,inner sep=1pt] (29) at (0,-1){};
\node (129) at (.35,-1){{\small $w_{\small 5}$}};
\node (229) at (-.15,-1.2){{\small $1$}};
\node[draw,fill,circle,inner sep=1pt] (23) at (-1.5,-2){};
\node (323) at (-1.15,-1.9){{\small $1$}};
\node[draw,fill,circle,inner sep=1pt] (22) at (-1.5,2){};
\node (122) at (-1.83,2){{\small $\mathbf{v_{\small 3}}$}};
\node (222) at (-1.5,2.3){{\small $2$}};

\node (T3) at (-1,-2.45){$\mathbf{T_3}$};

\path [draw,very thick,postaction={on each segment={mid arrow=very thick}}]
(22) to (24)
(24) to (25)
(24) to (27)
(25) to (20)
(26) to (23)
(25) to (26);

\path [draw,dashed,postaction={on each segment={mid arrow=thick}}]
(19) to (22);

%%%%%%%%%%%%%%%%%%%%%%%%%%%%%%%%%%%%%%%%%%

\node[draw,fill,circle,inner sep=1pt] (32) at (1,1){};
\node (132) at (.7,1){{\small $\mathbf{v_{\small 4}}$}};
\node (232) at (1.2,1.3){{\small $2$}};
\node[draw,fill,circle,inner sep=1pt] (33) at (1,0) {};
\node (333) at (1,-.3){{\small $2$}};
\node[draw,fill,circle,inner sep=1pt] (34) at (1,-1){};
\node (334) at (1,-.7){{\small $2$}};
\node[draw,fill,circle,inner sep=1pt] (35) at (2,1){};
\node (135) at (2.3,1.1){{\small $w_{\small 4}$}};
\node (235) at (2.25,0.8){{\small $2$}};
\node[draw,fill,circle,inner sep=1pt] (36) at (2,-1){};
\node (336) at (2,-.7){{\small $2$}};
\node[draw,fill,circle,inner sep=1pt] (31) at (.5,-2){};
\node (131) at (.5,-2.25){{\small $\mathbf{v_{\small 5}}$}};
\node (231) at (.85,-1.9){{\small $2$}};
\node[draw,fill,circle,inner sep=1pt] (30) at (.5,2){};
\node (330) at (.5,2.3){{\small $2$}};
\node (T4) at (1.75,1.65){$\mathbf{T_4}$};
\node (T5) at (1.25,-2.45){$\mathbf{T_5}$};

\path [draw,very thick,postaction={on each segment={mid arrow=very thick}}]
(32) to (30)
(32) to (33)
(34) to (36)
(33) to (28)
(31) to (34);

\path [draw,dashed,postaction={on each segment={mid arrow=thick}}]
(29) to (31)
(35) to (32);

%%%%%%%%%%%%%%%%%%%%%%%%%%%%%
\end{tikzpicture}
\end{minipage}
\hspace{0.5cm} 
\begin{minipage}[c]{4.8cm}
We take the weighted oriented subgraph $K$ of $D_1$ induced by $V(D_1)\setminus \{ w_1,w_2,w_4,w_5\}$. In the following figure,  $T_1,\ldots ,T_5$ are the ROT's and $B_1$ is the unicycle oriented subgraph such that the parent of $T_i$ is $v_i$ and $W^{T_i}=\{ w_i\}$ for $i=1,\ldots ,5$. Furthermore, $H=\cup_{i=1}^{5} \ T_i\ \cup B_1$ is a $\star$-semi-forest with $W_{1}^{H}=\{ w_1,w_5\}$ $W_{2}^{H}=\{ w_2,w_4\}$ and $V(H)=V(K)$. Therefore, $H$ is a ge\-ne\-ra\-ting $\star$-semi-forest of $K$.
\end{minipage}

\begin{example}\rm
Let $D_2$ be an oriented weighted graph such as the Figure. Let $K^{4}=H=D_2[x_1,x_2,x_3]$ be an induced weighted oriented subgraph of $D_2$ and $H$ a generating $\star$-semi-forest of $K^{4}$ with $\tilde{H}=H=\{ x_1,x_2,x_3\} \subseteq V^{+}$ and $W_1=W_2=\emptyset$. Then, by Theorem \ref{theorem-oct29}, there is a strong vertex cover $\mathcal{C}$ of $D_2$, such that $V(K^{4})\subseteq \mathcal{C}$. For this purpose, first we take a minimal vertex cover $\mathcal{C}_i$ of $D$ and define $\mathcal{C}_{i}^{\prime}=(\mathcal{C}_i\setminus W_1)\cup N_D(W_1)\cup N_{D}^{+}(W_2\cup \tilde{H})$. Finally, we use the algorithm in the proof of the Proposition \ref{GeneratinAStrongVC}.

\noindent
In this example, $\mathcal{C}_{i}^{\prime}=\mathcal{C}_i\cup N_{D}^{+}(\tilde{H})=\mathcal{C}_i\cup N_{D}^{+}(\{ x_1,x_2,x_3\} )=\mathcal{C}_i\cup \{ x_1,x_2,x_3,y_1,y_2\}$.
\end{example}
\begin{minipage}[c]{4.7cm} 
\centering
\begin{tikzpicture}[dot/.style={draw,fill,circle,inner sep=1pt},scale=.97]
%%%%%%%%%%%%%%%%%%%%%%%%%%%%%%%%%%%%%%%%%

\node[draw,fill,circle,inner sep=1pt] (2) at (-2,-7){};
\node (12) at (-2.3,-7){{\small $z_{\tiny 1}$}};
\node[draw,fill,circle,inner sep=1pt] (3) at (2,-7){};
\node (13) at (2.3,-7){{\small $z_{\tiny 2}$}};
\node[draw,fill,circle,inner sep=1pt] (4) at (1,-4){};
\node (14) at (1,-3.7){{\small $z_{\tiny 3}$}};
\node[draw,fill,circle,inner sep=1pt] (5) at (-2,-6){};
\node (15) at (-2.3,-6){{\small $y_{\tiny 1}$}};
\node[draw,fill,circle,inner sep=1pt] (6) at (-1,-6){};
\node (16) at (-1,-6.3){{\small $x_{\tiny 1}$}};
\node (116) at (-1,-6.65){{\small $w>1$}};
\node[draw,fill,circle,inner sep=1pt] (7) at (2,-6){};
\node (17) at (2.35,-6){{\small $y_{\tiny 2}$}};
\node[draw,fill,circle,inner sep=1pt] (8) at (1,-6){};
\node (18) at (1,-6.3){{\small $x_{\tiny 2}$}};
\node (118) at (1,-6.65){{\small $w>1$}};
\node[draw,fill,circle,inner sep=1pt] (9) at (0,-4){};
\node (19) at (0,-3.7){{\small $y_{\tiny 3}$}};
\node[draw,fill,circle,inner sep=1pt] (10) at (0,-5){};
\node (20) at (-.3,-5){{\small $x_{\tiny 3}$}};
\node (120) at (0.7,-5){{\small $w>1$}};
\node (0) at (0,-5.7){$H$};

\path [draw,postaction={on each segment={mid arrow=thick}}] 
(6) to (10)
(10) to (8)
(8) to (6)
(6) to (5)
(8) to (7)
(9) to (10);

\draw[-] (2)-- (5);
\draw[-] (3)--(7);
\draw[-] (4)--(9);

%%%%%%%%%%%%%%%%%%%%%%%%%%%%%%%%%%%%%%%%%%

\end{tikzpicture}
\end{minipage}
\begin{minipage}[c]{8.6cm}
\centering

\begin{itemize}[noitemsep]
\item If we take $\mathcal{C}_1=\{ x_1,x_2,y_1,y_2,y_3\}$ as a minimal vertex cover of $D_2$, then $\mathcal{C}_{1}^{\prime}=\{ x_1,x_2,x_3,y_1,y_2,y_3\}$ and $L_3(\mathcal{C}_{1}^{\prime})\setminus N_{D}^{+}(\tilde{H}) =\emptyset$. Thus, by Proposition \ref{GeneratinAStrongVC}, $\mathcal{C}=\mathcal{C}_{1}^{\prime}=\{ x_1,x_2,x_3,y_1,y_2,y_3\}$ is a strong vertex cover such that $V(K^{4})\subseteq \mathcal{C}$ and it is no minimal. Furthermore, in this case it is enough to know the orientation of $E(K^{4})$, since $L_3(\mathcal{C})=\{ x_1,x_2,x_3\}$ and $N_{D_2}(x_i)\subseteq \mathcal{C}$ for $i=1,2,3$.
\item Now, if we take $\mathcal{C}_2=\{ x_1,x_2,x_3,z_1,z_2,z_3\}$ as a minimal vertex cover of $D_2$, then $\mathcal{C}_{2}^{\prime}=V(D_2)\setminus \{ y_3\}$ and $L_3(\mathcal{C}_{2}^{\prime})\setminus N_{D}^{+}(\tilde{H}) =\{ z_1,z_2\}$. Thus, by the algorithm in the proof of Proposition \ref{GeneratinAStrongVC}, $\mathcal{C}^{\prime}=\mathcal{C}_{4}^{\prime}=\{ x_1,x_2,x_3,y_1,y_2,z_3\}$ is a strong vertex cover such that $V(K^{4})\subseteq \mathcal{C}^{\prime}$. \medskip
\end{itemize}
\end{minipage}

\noindent
Since $\mathcal{C}$ and $\mathcal{C}^{\prime}$ are no minimal with $L_3(\mathcal{C})=\{ x_1,x_2,x_3\}$ and $L_3(\mathcal{C}^{\prime})=\{ x_1,x_2\}$, then $D_2$ is mixed. Furthermore, $V(D_2)$ has a partition in complete graphs: $K^{i}=D_2[y_i,z_i]$ for $i=1,2,3$ and $K^{4}=D_2[x_1,x_2,x_3]$ .  

\begin{example}\rm
Let $D_3=(G,\mathcal{O},w)$ be the following oriented graph. Hence, 
\end{example}
\begin{minipage}[c]{4.7cm} 
\centering
\begin{tikzpicture}[dot/.style={draw,fill,circle,inner sep=1pt},scale=.97]
%%%%%%%%%%%%%%%%%%%%%%%%%%%%%%%%%%%%%%%%%

\node[draw,fill,circle,inner sep=1pt] (2) at (-5,.5){};
\node (12) at (-5,.2){{\small $b^{\prime}$}};
\node (22) at (-5,.7){{\tiny $w>1$}};
\node[draw,fill,circle,inner sep=1pt] (5) at (-6.5,-1.5) {};
\node (15) at (-6.5,-1.8){{\small $d_{\small 1}^{\prime}$}};
\node (25) at (-6.5,-1.2){{\small $\small 1$}};
\node[draw,fill,circle,inner sep=1pt](3) at (-4,.5) {};
\node (13) at (-4,.2){{\small $b$}};
\node (23) at (-4,.75){{\small $1$}};
\node[draw,fill,circle,inner sep=1pt] (6) at (-2.5,-1.5){};
\node (16) at (-2.5,-1.8){{\small $d_{\small 2}^{\prime}$}};
\node (26) at (-2.5,-1.2){{\small $\small 1$}};
\node[draw,fill,circle,inner sep=1pt] (1) at (-6,.5){};
\node (11) at (-6.2,.25){{\small $c$}};
\node (21) at (-6.2,.65){{\small $1$}};
\node[draw,fill,circle,inner sep=1pt] (4) at (-5.5,-1.5) {};
\node (14) at (-5.5,-1.8){{\small $d_{\small 1}$}};
\node (24) at (-5,-1.5){{\tiny $w>1$}};
\node[draw,fill,circle,inner sep=1pt] (7) at (-3,.5){};
\node (11) at (-2.8,.25){{\small $a$}};
\node (21) at (-2.7,.65){{\tiny $w>1$}};
\node[draw,fill,circle,inner sep=1pt] (8) at (-3.5,-1.5){};
\node (18) at (-3.5,-1.8){{\small $d_{\small 2}$}};
\node (28) at (-4,-1.5){{\tiny $w>1$}};
\node[draw,fill,circle,inner sep=1pt] (9) at (-4.5,1.75){};
\node (19) at (-4.2,1.8){{\small $a^{\prime}$}};
\node (29) at (-4.5,2){{\small $1$}};
\node (C) at (-4.5,1.15){$\mathbf{C}$};
\node (e1) at (-6,-1.75){$e_1$};
\node (e2) at (-3,-1.75){$e_2$};

\path [draw,very thick,postaction={on each segment={mid arrow=thick}}] 
(9) to (7)     (7) to (3)     (3) to (2)     (2) to (1)     
(5) to (4)     (8) to (6)     (9) to (1);

\path [draw,postaction={on each segment={mid arrow=thick}}]
(7) to (8)     (4) to (1);

%%%%%%%%%%%%%%%%%%%%%%%%%%%%%%%%%%%%%%%%%%
\end{tikzpicture}
\end{minipage}
\begin{minipage}[c]{8.6cm}
\centering
\begin{itemize}[noitemsep]
\item $G$ has no $3$- and $4$-cycles. Moreover, $girth(G)=5$.
\item $G$ is an $SCQ$-graph with $S_G=\{ e_1,e_2\}$, $C_G=\{ C\}$ and $Q_G=\emptyset$, where $e_1=\{ d_1,d_{1}^{\prime}\}$, $e_2=\{ d_2,d_{2}^{\prime}\}$ and  $C=(a^{\prime},a,b,b^{\prime},c,a^{\prime})$.
\item $C$ has the $\star$-property. But $e_2$ has a generating
$\star$-semi-forest. Hence, by Theorem \ref{No3,4,5Cyc-Unmix}, $I(D)$ is mixed.
\end{itemize}
\end{minipage}

\begin{example}\rm
Let $D_4=(G,\mathcal{O},w)$ be the following oriented weighted graph. Hence,
\end{example}
\begin{minipage}[c]{4cm} 
\centering
\begin{tikzpicture}[dot/.style={draw,fill,circle,inner sep=1pt},scale=.97]
%%%%%%%%%%%%%%%%%%%%%%%%%%%%%%%%%%%%%%%%%

\node[draw,fill,circle,inner sep=1pt] (2) at (-5,.5){};
\node (12) at (-4.7,.7){{\tiny $w>1$}};
\node (22) at (-5.25,.45){{\small $d$}};
\node[draw,fill,circle,inner sep=1pt] (5) at (-5,-.5) {};
\node (15) at (-5.1,-.7){{\small $1$}};
\node[draw,fill,circle,inner sep=1pt](3) at (-4,.5) {};
\node (13) at (-4,.8){{\small $1$}};
\node[draw,fill,circle,inner sep=1pt] (6) at (-4,-.5){};
\node (16) at (-3.7,-.5){{\small $1$}};
\node[draw,fill,circle,inner sep=1pt] (1) at (-6,.5){};
\node (11) at (-6.3,.5){{\small $1$}};
\node[draw,fill,circle,inner sep=1pt] (4) at (-6,-.5) {};
\node (14) at (-6.3,-.5){{\small $1$}};
\node[draw,fill,circle,inner sep=1pt] (7) at (-3,1.5){};
\node (17) at (-2.7,1.5){{\small $1$}};
\node[draw,fill,circle,inner sep=1pt] (8) at (-3,-1.5){};
\node (18) at (-2.7,-1.5){{\small $1$}};
\node[draw,fill,circle,inner sep=1pt] (10) at (-4.5,-1.5){};
\node (20) at (-4.8,-1.5){{\small $1$}};
\node[draw,fill,circle,inner sep=1pt] (9) at (-5.5,1.5){};
\node (19) at (-5.8,1.5){{\small $1$}};

\path [draw,postaction={on each segment={mid arrow=thick}}]
(2) to (3)
(2) to (9)
(5) to (2)
(10) to (8)
(8) to (7)
(7)to (9)
(1) to (9)
(1) to (4)
(4) to (5)
(10) to (5)
(10) to (6)
(3) to (6);

%%%%%%%%%%%%%%%%%%%%%%%%%%%%%%%%%%%%%%%%%%
\end{tikzpicture}
\end{minipage}
\begin{minipage}[c]{9.3cm}
\centering
\begin{itemize}[noitemsep]
\item $G=P_{10}$, then by Theorem \ref{wellcovered-characterization2} and Remark \ref{1star}, $G$ is well-covered and $I(G)$ is unmixed.
\item $d$ is not a sink and $d\in V^{+}$.
\item By Proposition \ref{P10-Unmix}, $I(D_4)$ is unmixed. 
\end{itemize}
\end{minipage}

\bibliographystyle{amsplain}

\end{document}